\numberwithin{equation}{section}
\newtheorem{thm}{Theorem}[section]
\newtheorem{prop}[thm]{Proposition}
\newtheorem{lem}[thm]{Lemma}
\newtheorem{cor}[thm]{Corollary}
\newtheorem{defn}[thm]{Definition}
\theoremstyle{remark}
\newtheoremstyle{mytheoremstyle} % name
{\topsep}                    % Space above
{\topsep}                    % Space below
{}                   % Body font
{}                           % Indent amount
{\scshape}                   % Theorem head font
{.}                          % Punctuation after theorem head
{.5em}                       % Space after theorem head
{}  % Theorem head spec (can be left empty, meaning ‘normal’)
\theoremstyle{mytheoremstyle} \newtheorem{rem}[thm]{Remark}
\newcommand{\R}{{\mathbb R}}
\newcommand{\Z}{{\mathbb Z}}
\newcommand{\N}{{\mathbb N}}
\newcommand{\F}{\mathcal{F}}
\newcommand{\supp}{\operatorname{supp}}
\newcommand{\jap}[1]{\langle #1 \rangle}
\newcommand{\trd}{3\text{rd}}
\thanks{S. C. was partially supported by the Portuguese government
through FCT - Fundação para a Ciência e a Tecnologia, I.P., project UIDB/04459/2020 with DOI identifier
10-54499/UIDP/04459/2020 (CAMGSD). S. K. was supported by JSPS KAKENHI grant JP24K16945.}
\title[Global existence and scattering for 2D modified ZK]{Global well-posedness and scattering for the 2D modified Zakharov-Kuznetsov equation}
\author{Simão Correia and Shinya Kinoshita}
\subjclass[2010]{35A01, 35B30, 35B40, 35Q53.}
\keywords{modified Zakharov-Kuznetsov, global well-posedness, scattering}
\begin{document}
\begin{abstract}
We consider the Cauchy problem associated with the modified Zakharov-Kuznetsov equation over $\R^2$. Taking into consideration the associated dispersive effects, we introduce, for $s,a\ge 0$, a two-parameter space $H^{s,a}(\R^2)$, which scales as the classic $H^s$ spaces. In this new class, we prove local well-posedness for $s+a\ge 1/4$, $0<a<1/4$, and global well-posedness and scattering for small data in the case $s=0, \ a=1/4$. These results are shown to be sharp in the sense of $C^3$-flows.

\end{abstract}
%\tableofcontents
\maketitle
\section{Introduction}\label{sec:intro}

In this work, we consider the two-dimensional modified Zakharov-Kuznetsov equation 
\begin{equation}\label{mZK1}\tag{mZK}
\begin{cases}
\partial_t u  +\partial_{x_1} \Delta u + \partial_{x_1}( u^3)=0, & \qquad t\in \R,\quad (x_1,x_2)\in  \mathbb{R}^2, \\
u\big|_{t=0} = u_0. &
\end{cases}
\end{equation}
Equation \eqref{mZK1} corresponds to a particular case of the $k$-generalized Zakharov-Kuznetsov equation over $\R^d$,
\begin{equation}\label{eq:gzk}
\partial_t u  +\partial_{x_1} \Delta u + \partial_{x_1}( u^{k+1})=0,  \qquad t\in \R,\quad (x_1,\dots, x_d)\in  \mathbb{R}^d,
\end{equation}
When $k=1$, this corresponds to the classical Zakharov-Kuznetsov model, which is related to the propagation of ion-sound waves in magnetic fields over $\R^3$ \cite{ZakharovKuznetsov}. In two or three dimensions, it also arises as a long-wave limit of the Euler-Poisson system  \cite{LanLinSaut}. Mathematically, \eqref{eq:gzk} can be seen as a higher-dimensional version of the $k$-generalized Korteweg-de Vries equation
\begin{equation}\label{gkdv}
    \partial_tu + \partial_x^3u + \partial_x(u^{k+1})=0.
\end{equation}
Over the last twenty years, there has been an intensive interest in higher-dimensional versions of \eqref{gkdv} , including the KP-II \cite{HHK09, MolSautTze_kpii},  Dysthe \cite{Staffilani_dysthe, MosPilSaut_dysthe} and  Novikov-Veselov equations \cite{AdamsGrun, Angelopolous_NV, KazMunoz_nv, KazMunoz_nv_2}.

A common trait among these various models is that, when $d=2$ and $k=1,2$, there are several nontrivial obstacles to the well-posedness theory for initial data in $H^s$ above the scaling-critical regularity. Equation \eqref{mZK1} is no exception. Linares and Pastor  showed the local existence for $s>3/4$ \cite{LinaresPastor_mZK_lwp} and the global existence for $s> 53/63$ \cite{LinaresPastor_mZK_gwp}. The local existence result was improved to $s>1/4$ by Ribaud and Vento \cite{RibaudVento_gzk}. The second author \cite{Kinoshita22} reached the endpoint case $s=1/4$ and proved that it is sharp, in the sense that no $C^3$-flow can be defined for $s<1/4$. This shows that, in the $H^s$ scale, it is impossible to reach the scaling-critical regularity $s_c=0$ by contraction methods.

Let us mention some related results concerning \eqref{eq:gzk}. For $k=1$, $d=2$, Faminskii showed the local well-posedness for $s\ge 1$. This result was improved to $s>3/4$ by Linares and Pastor \cite{LinaresPastor_mZK_lwp}, $s>1/2$ by Grünrock-Herr \cite{HerrGrun_ZK} and Molinet-Pilod \cite{MolinetPilod_zk}(independently), and finally to $s>-1/4$ by the second author \cite{Kinoshita_ZK}, where he also showed that $s=-1/4$ is sharp for $C^2$ flows. In particular, similarly to \eqref{mZK1}, there exists a gap between the local existence theory and the scaling-critical regularity $s_c=-1/2$. 
For $k=1, d=3,$ Linares and Saut \cite{LinaresSaut_3d_zk} proved local well-posedness for $s>9/8$, which improved to $s>1$ by Ribaud-Vento \cite{RibaudVento_3d_zk} and finally to the sharp threshold $s>-1/2$ by Herr and the second author \cite{HerrKinoshita_3d_ZK}. For $k=2, d\ge 3$, Grünrock \cite{Grun_3d_mZK} proved local existence in the full subcritical range $s>s_c=d/2-1$ (see also \cite{Kinoshita22} for the critical case $s_c=d/2-1$). The regularity levels $s=0$ and $s=1$ are of particular importance, as they correspond to two conservation laws, namely the mass
\begin{equation}\label{eq:mass}
    \|u(t)\|_{L^2(\R^d)}  = \|u(0)\|_{L^2(\R^d)},
\end{equation}
and the energy
\begin{equation}
    E(u(t))  := \frac12 \int_{\R^d} |\nabla u(t)|^2 d x_1 \ldots d x_d - \frac{1}{k+2} \int_{\R^d} (u(t))^{k+2} dx_1 \ldots dx_d = E(u(0)).
\end{equation}
In particular, a local well-posedness theory in $L^2(\R^2)$ (which includes a blow-up alternative) can be immediately globalized by means of \eqref{eq:mass}. In the special case of \eqref{mZK1}, this corresponds precisely to the critical regularity $s_c=0$, currently unreachable by the existing techniques.

\medskip

Apart from the intrinsic interest of local well-posedness results at low regularities (which often involves new insights on the relation between dispersion and nonlinear interactions), there are important dynamical consequences of a local well-posedness result at scaling-critical regularities. First, there are nontrivial dynamical objects lying at this regularity, such as self-similar solutions. In recent years, there has been a growing interest in these solutions due to their stable blow-up behavior \cite{BanicaVega, CazWei_self_sim, CCV20,CCV21, MVW}.  Second, it usually follows directly that small data in the critical space give rise to globally defined solutions scattering at infinity, that is, which behave as linear solutions when $t\to \infty$ \cite{CazWei_Hs, KPV_gkdv}.
In the case of \eqref{eq:gzk}, scattering behavior has remained elusive, despite the intense research in the last fifteen years on the asymptotic behavior of solutions. These include the (asymptotic) stability of solitary waves \cite{ CMPS16, deBouard96, FHR23, PV24}, blow-up phenomena and near-soliton dynamics for $k=2$ \cite{ BGMY24arxiv, CLY24arxiv, FHRY18arxiv}, and the existence of wave operators for $k=1$ and $d=2$ \cite{Segata2025arxiv}.

% \Kinc{
% There are lots of works on the asymptotic behavior of solutions to \eqref{eq:gzk} such as the stability/instability of solitary solutions \cite{deBouard96, CMPS16, FHR23, PV24}. Recently, the blow-up results and the near soliton dynamics of \eqref{mZK1} have been studied extensively \cite{FHRY18arxiv, BGMY24arxiv, CLY24arxiv}. In the case $k=1$ and $d=2$, Segata \cite{Segata2025arxiv} proved the scattering in the framework of the final data problem, which implies the existence of wave operators.
% }

\bigskip

% where $\Delta = \partial_{x_1}^2 + \partial_{x_2}^2$ is the Laplacian in $\mathbb{R}^2$. 

Returning to \eqref{mZK1}, we first recall the linear transformation introduced in \cite{HerrGrun_ZK}.
By taking $x= 4^{-1/3}x_1 + \sqrt{3} 4^{-1/3} x_2$, 
$y= 4^{-1/3}x_1 - \sqrt{3} 4^{-1/3} x_2$ and $v(t,x,y) := u(t,x_1,x_2)$, 
$v_0(x,y) := u_0(x_1,x_2)$, \eqref{mZK1} is equivalent to
\begin{equation}\label{mZK2}\tag{mZK$_{\text{sym}}$}
\begin{cases}
\partial_t v  +(\partial_{x}^3 + \partial_y^3) v + (\partial_x + \partial_y)( v^3)=0, &\qquad t\in \R,\  (x,y) \in \mathbb{R} \times \mathbb{R}^2, \\
v\big|_{t=0}= v_0.
\end{cases}
\end{equation}
This reformulation has the advantage of decoupling the dispersive effects in each direction, making it easier to identify the essential interactions in the nonlinear term.

The solution to the linear equation associated with \eqref{mZK2} is given by
$$
v(t,x,y)=(e^{-t (\partial_{x}^3 + \partial_y^3)}v_0)(x,y)=\frac{1}{t^{\frac23}}\int_{\R^2} \text{Ai}\left(\frac{x-x'}{t^{\frac13}}\right)\text{Ai}\left(\frac{y-y'}{t^{\frac13}}\right)v_0(x', y')dx'dy',
$$
where $\text{Ai}$ is the Airy function. One immediately derives the pointwise dispersive decay estimate\footnote{Here, $|D_x|^{\alpha} = \F_{x,y}^{-1} |\xi|^{\alpha} \F_{x,y}$ and $|D_y|^{\alpha} = \F_{x,y}^{-1} |\eta|^{\alpha} \F_{x,y}$, where $\mathcal{F}_{x,y}$ is the Fourier transform in $x,y$ and $(\xi,\eta)$ are the corresponding frequency variables.}  (see also \cite{KPV_osc})
\begin{equation}\label{eq:pointwise}
\||D_x|^{\frac12}|D_y|^\frac12e^{-t (\partial_{x}^3 + \partial_y^3)}v_0\|_{L^\infty(\R^2)}\lesssim \frac{1}{t}\|v_0\|_{L^1(\R^2)}.
\end{equation}
The fact that the dispersive effects become weaker near $\xi\eta=0$ is a strong obstacle for the proof of scattering\footnote{Observe that the nonlinearity does not vanish near $\xi\eta=0$.}. By contrast, in the case of the Dysthe equation \cite{MosPilSaut_dysthe}, the pointwise decay estimate becomes
\[
\|(|D_x|^{2}+|D_y|^2)^\frac12e^{-t (\partial_{x}^3 - 3\partial_c \partial_y^3)}v_0\|_{L^\infty(\R^2)}\lesssim \frac{1}{t}\|v_0\|_{L^1(\R^2)},
\]
and thus the dispersive effects are weaker only close to $(\xi,\eta)=0$. For this reason, Mosincat, Pilod and Saut \cite{MosPilSaut_dysthe} were able to show global well-posedness and scattering at the critical regularity $s_c=0$.

The above discussion leads us to conclude that, in order to reach scaling-critical regularity, one must take into account the weaker dispersion near $\xi\eta=0$ and incorporate this into the functional framework.
Given $s,a\ge 0$, we define
\begin{equation}
    H^{s,a}(\R^2)=\left\{ u\in L^2(\R^2): |D_x|^a|D_y|^{-a}u,\ |D_y|^a|D_x|^{-a}u\in H^s(\R
    ^2) \right\}
\end{equation}
equipped with the norm
\[
\|u\|_{H^{s,a}(\R^2)} = \| |D_x|^a|D_y|^{-a} u \|_{H^s} + \| |D_y|^a|D_x|^{-a} u \|_{H^s}.
\]
Observe that $H^{s,a}(\R^2)$ scales exactly as $H^s(\R^2)$. We claim that this is the correct space to study \eqref{mZK2} at low regularity, as it penalizes (in a controlled way) concentrations near $\xi\eta=0$. This is corroborated by the two main results of this work.

\begin{thm}[Critical global well-posedness for small data]\label{thm:wp_critical}
The Cauchy problem \eqref{mZK2} is globally well-posed for small initial data in $H^{0,1/4}(\R^2)$. Moreover, the solution scatters in $H^{0,1/4}(\R^2)$ as $t\to \infty$.
\end{thm}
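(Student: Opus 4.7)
The plan is to run a Picard iteration on the Duhamel formula
\[
v(t) = U(t)v_0 - \int_0^t U(t-s)(\partial_x+\partial_y)(v^3)(s)\,ds, \qquad U(t) = e^{-t(\partial_x^3+\partial_y^3)},
\]
in a scaling-invariant resolution space $X \hookrightarrow C(\R;H^{0,1/4})$. The starting point is the dispersive toolbox: interpolating the pointwise decay \eqref{eq:pointwise} with the $L^2$ conservation and running $TT^*$ yields a family of weighted Strichartz estimates of schematic form $\||D_x|^{\alpha}|D_y|^{\beta}U(t)v_0\|_{L^p_tL^q_{x,y}} \lesssim \|v_0\|_{L^2}$ whenever $\alpha+\beta = 1/p$ and $2/p = 1 - 2/q$, with the endpoint $(\alpha,\beta,p,q)=(1/4,1/4,2,\infty)$ in particular admissible. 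These should be complemented by Kato-type local smoothing estimates, which for ZK-type equations gain a full derivative in the direction of dispersion on lines of constant position. Commuting $|D_x|^{\pm 1/4}|D_y|^{\mp 1/4}$ through $U(t)$, all of the above lift trivially to the scaling-invariant $H^{0,1/4}$ setting.

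The resolution space $X$ would then be built by combining the energy norm $\|v\|_{L^\infty_tH^{0,1/4}}$, weighted Strichartz norms applied separately to the two pieces $|D_x|^{1/4}|D_y|^{-1/4}v$ and $|D_y|^{1/4}|D_x|^{-1/4}v$ (for instance in a reweighted $L^2_tL^\infty_{x,y}$), and suitably reweighted Kato smoothing norms, arranged so that $\|U(t)v_0\|_X \lesssim \|v_0\|_{H^{0,1/4}}$ holds directly by Step 1. The crux of the proof is then the scale-invariant trilinear estimate
\[
\Big\|\int_0^t U(t-s)(\partial_x+\partial_y)(v^3)(s)\,ds\Big\|_X \lesssim \|v\|_X^3.
\]
My approach would be a Littlewood--Paley decomposition in $(\xi,\eta)$: place the derivative $\partial_x+\partial_y$ on the factor with the largest corresponding frequency, redistribute the weights $|D_x|^{1/4}|D_y|^{-1/4}$ and $|D_y|^{1/4}|D_x|^{-1/4}$ across the three copies of $v$ so that each copy carries a scaling-invariant multiplier matching one of the norms defining $X$, and then close by H\"older in $(t,x,y)$ with fractional Leibniz (or Coifman--Meyer) to manage the non-multiplicative redistribution of weights. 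When the derivative $\partial_x+\partial_y$ lands on a smoothing factor, the full derivative gain of the Kato estimate absorbs it; when it lands on a Strichartz factor, its frequency dominates and the gain is absorbed by the $L^2_tL^\infty_{x,y}$ bound above.

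Once the linear and trilinear estimates are in place, a standard Banach fixed point in a small ball of $X$ provides the unique global solution with $\|v\|_X\lesssim \|v_0\|_{H^{0,1/4}}$ and continuous dependence on data. Applying the trilinear estimate to $v$ restricted to $(T,\infty)$ shows that the tail of the Duhamel integral is small in $X$, hence the scattering state
\[
v_+ := v_0 - \int_0^\infty U(-s)(\partial_x+\partial_y)(v^3)(s)\,ds
\]
is a well-defined element of $H^{0,1/4}$ satisfying $\|U(-t)v(t)-v_+\|_{H^{0,1/4}}\to 0$ as $t\to\infty$.

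The main obstacle is the trilinear estimate. At the scaling-critical level, $X^{s,b}$ norms with $b>1/2$ are unavailable, so every bound must be honestly scale-invariant. More seriously, the multipliers $|D_x|^{\pm 1/4}|D_y|^{\mp 1/4}$ are non-elliptic: they are singular along the axes $\xi\eta=0$, which is exactly where the pointwise decay \eqref{eq:pointwise} degenerates, and the cubic nonlinearity does not vanish in this region. Running the argument hinges on ensuring that no single product term in the Littlewood--Paley decomposition simultaneously carries a singular weight \emph{and} a degenerate-dispersion frequency on the same factor. The symmetric pairing of the two weights in the definition of $H^{0,1/4}$ seems to be designed precisely to supply the flexibility needed to arrange this, and exploiting this symmetry systematically is, in my view, where the real work of the proof lies.
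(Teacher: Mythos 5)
Your construction rests on a linear estimate that is not available. Since the dispersion degenerates along $\xi\eta=0$, the decay in \eqref{eq:pointwise} is only $t^{-1}$, so interpolation plus $TT^*$ (or Keel--Tao) yields your weighted Strichartz family only for $p>2$; the pair $(\alpha,\beta,p,q)=(1/4,1/4,2,\infty)$ that you declare ``admissible'' is exactly the forbidden endpoint, the analogue of the $L^2_tL^\infty_x$ estimate for the two-dimensional Schr\"odinger equation, which fails. Without it the $L^2_tL^\infty_{x,y}$ building block of your space $X$ has no foundation. Likewise, redistributing the weights $|D_x|^{\pm 1/4}|D_y|^{\mp 1/4}$ by ``fractional Leibniz or Coifman--Meyer'' is not legitimate: the symbol $\omega(\xi,\eta)$ is singular on both axes and is not a Coifman--Meyer/Mikhlin multiplier, which is precisely why the paper decomposes dyadically in $\xi$ and $\eta$ separately and controls the resulting sums through the kernel condition \eqref{eq:kernel} in Lemma \ref{lem:reduct}.

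More fundamentally, the scheme ``Kato smoothing absorbs the derivative, Strichartz/maximal norms absorb the rest, close by H\"older'' is the classical Kenig--Ponce--Vega/Ribaud--Vento framework, and for this equation it is known to stall strictly above critical regularity (it gives $s>1/4$ in the unweighted scale; the maximal-function ingredient costs regularity and positive powers of $T$ that are unaffordable at critical scaling). The genuine enemy, which your Littlewood--Paley case analysis never isolates, is the near-resonant cubic interaction in which the input $\xi$-frequencies are pairwise nearly opposite and comparable (the paper's Case 2C): there the derivative sits at top frequency, there is no transversality, and H\"older plus any amount of weight redistribution cannot close. The paper handles it by working in $U^2/V^2$ atomic spaces and combining the bilinear transverse Strichartz bounds \eqref{eq:bilin_stri}--\eqref{eq:bilin_stri_2}, the $U^2$--$V^2$--$U^4$ interpolation with logarithmic loss, and, in the fully resonant regime, the lower bound on the resonance function together with the high-modulation estimate \eqref{eq:highmod}. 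None of these ingredients, nor substitutes for them, appear in your proposal; the closing remark of your write-up correctly locates the difficulty, but the tools you propose do not reach it, so the trilinear estimate at $s=0$, $a=1/4$ --- and with it the fixed point and scattering --- remains unproven.
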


\begin{thm}[Subcritical local well-posedness]\label{thm:wp_subcritical}
    Fix $s,a\ge 0$ with $a<1/4$ and $s+a\ge 1/4$. Then the Cauchy problem \eqref{mZK2} is locally well-posed in $ H^{s,a}(\R^2)$.
\end{thm}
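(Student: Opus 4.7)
The plan is to solve \eqref{mZK2} on a short time interval via a contraction mapping argument in a Bourgain-type space adapted both to the dispersion relation $\phi(\xi,\eta)=\xi^3+\eta^3$ and to the anisotropic weight defining $H^{s,a}(\R^2)$. Setting $w_a(\xi,\eta):=|\xi|^{a}|\eta|^{-a}+|\eta|^{a}|\xi|^{-a}$ and
\[
\|u\|_{X^{s,a,b}}:=\bigl\|w_a(D_x,D_y)u\bigr\|_{X^{s,b}_\phi},
\]
where $X^{s,b}_\phi$ is the usual Bourgain space with weight $\langle(\xi,\eta)\rangle^s\langle\tau-\phi(\xi,\eta)\rangle^b$, the standard Duhamel machinery reduces everything to establishing the trilinear estimate
\[
\bigl\|(\partial_x+\partial_y)(v_1 v_2 v_3)\bigr\|_{X^{s,a,b-1}}\lesssim \prod_{j=1}^3\|v_j\|_{X^{s,a,b}}
\]
for some $b>1/2$, together with the usual linear estimates and the $T^{\varepsilon}$-gain coming from time localization, which supplies the smallness needed to close a Picard iteration.

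To prove the trilinear estimate I would perform a Littlewood--Paley decomposition in $\xi$ and $\eta$ \emph{separately} (since $w_a$ distinguishes the two directions), write $N_j^x, N_j^y$ for the resulting dyadic sizes of the inputs and of the output, and localize the modulation $\tau-\phi(\xi,\eta)$ to dyadic scales $L_j$. After duality, the trilinear form becomes a sum of $L^2$-pairings in which the anisotropic weight contributes the combinatorial factor
\[
\frac{w_a(N_0^x,N_0^y)}{\prod_{j=1}^3 w_a(N_j^x,N_j^y)}.
\]
This factor is scale-invariant (and harmless) whenever all blocks sit away from the axes, and is \emph{favourable} precisely in the near-axis region $\xi\eta\approx 0$ where the dispersive estimate \eqref{eq:pointwise} degenerates. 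The bulk of the nonlinear analysis then follows the strategy of the second author in \cite{Kinoshita22} for $a=0$, $s=1/4$: exploit the factorization $\xi^3+\eta^3=(\xi+\eta)(\xi^2-\xi\eta+\eta^2)$ to extract information on the resonance function, apply bilinear $L^2$ Fourier restriction / transversality estimates for Airy-type surfaces block by block, and sum the dyadic pieces. Away from the axes the argument closes under $s\ge 1/4$; in the near-axis sectors the gain provided by $w_a$ absorbs a deficit of $a$ in the Sobolev exponent, producing precisely the threshold $s+a\ge 1/4$.

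The main obstacle, and the bulk of the technical work, lies in the simultaneous presence of two independent resonance mechanisms which interact nontrivially in the trilinear analysis: the \emph{symmetric} near-resonance $\xi_j+\eta_j\approx0$ stemming from the factor $\xi+\eta$ in $\phi$, and the \emph{axis} near-resonance $\xi_j\eta_j\approx0$ stemming from the weight $w_a$. A careful case subdivision is required so that, in each configuration, at least one of the available gains (the modulation weight $L_j^{b-1}$, the bilinear Strichartz-type gains in the transversal directions, or the anisotropic weight $w_a$) produces a power of the minimum dyadic scale strong enough to sum the resulting series. The strict inequality $a<1/4$ reflects the subcritical nature of the problem: at $a=1/4$ the trilinear estimate becomes scale-invariant and the $T^{\varepsilon}$-gain disappears, which is exactly why the critical framework of Theorem~\ref{thm:wp_critical} is needed at that endpoint. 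Once the trilinear estimate is in place, Picard iteration in $X^{s,a,b}_T$ delivers local existence, uniqueness, continuous dependence, and persistence of regularity in $H^{s,a}(\R^2)$, completing the proof of Theorem~\ref{thm:wp_subcritical}.
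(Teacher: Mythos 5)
Your functional framework (the weighted Bourgain space, reduction via Duhamel and duality to a trilinear estimate, contraction with a $T^\varepsilon$ gain) is the same as the paper's, but your plan for the key trilinear estimate diverges and, as sketched, has a genuine gap. First, the heuristic that the anisotropic weight is ``favourable precisely in the near-axis region'' ignores that the weight also sits on the \emph{output} block: in the combinatorial factor $w_a(N_0^x,N_0^y)/\prod_j w_a(N_j^x,N_j^y)$, an output near an axis with balanced inputs makes this factor \emph{large}. This is not a marginal configuration --- it is exactly the hardest case in the paper's critical analysis (all $|\xi_j|$ comparable, output $\eta$-frequency smallest, cf.\ the factor $M_4^{5/4}N_4^{-1/4}$ on the left of \eqref{eq:key_est2}), and handling it requires resonance-function/high-modulation and bilinear $U^2$-type arguments that are not part of the $a=0$, $s=1/4$ scheme of \cite{Kinoshita22} you propose to follow. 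Second, your claim that ``away from the axes the argument closes under $s\ge 1/4$'' does not prove the theorem: the statement allows $s$ as small as $1/4-a<1/4$, and in the balanced region the weight gives no gain, so you would need the trilinear estimate there at regularity below $1/4$ --- true, but only because of the critical ($s=0$, $a=1/4$) machinery, not because of the $H^{1/4}$ case analysis you invoke. In short, the case subdivision you defer to is precisely where the theorem lives, and the two regions you identify are not matched with arguments that actually close them at the claimed thresholds.

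The paper's route avoids redoing any case analysis: it first upgrades the critical multilinear estimate to a modulation-localized form (Corollary \ref{cor:crit_mod}, estimate \eqref{eq:key_est3}, with factor $(L_1L_2L_3L_4)^{1/2}$ and the full weights $\omega_j$), then interpolates block by block with Kinoshita's estimate (3.3) from \cite{Kinoshita22} (which carries the weights $R_j^{1/4}$ and, crucially, a gain $(L_1L_2L_3L_4)^{1/2-\epsilon_0}$). Interpolation with parameter $4a$ reproduces the weight $\omega_j^{4a}R_j^s$ for $s+a=1/4$ and retains a modulation gain $\epsilon_0(1-4a)>0$ exactly because $a<1/4$; the case $s+a>1/4$ then follows by monotonicity in $R$. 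If you want to salvage your plan with the least effort, replace the direct case analysis by this interpolation step: prove (or quote) the critical estimate in modulation-localized form and interpolate, rather than re-running the $H^{1/4}$ argument with weights inserted.
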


\noindent For  precise statements, see Theorems \ref{thm:wp_critical_v2} and \ref{thm:wp_subcritical_v2}, respectively.

\medskip
The proof of Theorem \ref{thm:wp_critical} relies on a careful combination of the $L^4$ Strichartz estimate (related to \eqref{eq:pointwise}),
\[
		\||D_x|^{\frac18} |D_y|^{\frac18} e^{-t (\partial_{x}^3 + \partial_y^3)} f\|_{L^4_{t,x,y}} \lesssim \|f\|_{L_{x,y}^2},
\]
and the $L^2$ bilinear Strichartz estimates for nonresonant interactions (see \eqref{eq:bilinear}). To implement this, we use a fixed-point argument over an atomic space $U^2$ as introduced by \cite{KochTataru05, KochTataru07} (see also \cite{HHK09} and Section 2.1 below), which corresponds to a version of Fourier restriction norm spaces (also known as Bourgain's $X^{s,b}$ spaces) well-suited to treat scaling-critical regularities. 

For Theorem \ref{thm:wp_subcritical}, the argument is considerably simpler. Using the Fourier restriction norm approach, the problem is reduced to a particular multilinear estimate, which follows by interpolating between the known estimate for $a=0, s=1/4$ proved by the second author \cite{Kinoshita22} and the estimates at the critical regularity $a=1/4, s=0$.

\medskip
\begin{rem}
    In the context of the two-dimensional Zakharov-Kuznetsov equation ($k=1$), Segata \cite{Segata2025arxiv} proved the existence of wave operators (in high regularity $H^2_{x,y}$) by introducing similar singular weights that penalize the lack of dispersion near the axii $\xi\eta=0$.
\end{rem}

\medskip
\begin{rem}
    During the preparation of this manuscript, we have been made aware of the preprint \cite{Anjolras}, where the author proves that, for initial data satisfying
    \[
\|v_0\|_{H^3(\R^2)} + \|(|x|+|y|)v_0\|_{L^2(\R^2)}\ll 1,
    \]
    the corresponding solution of \eqref{mZK2} is global and scatters (in $H^2(\R^2)$). The techniques are substantially different from ours, relying mainly on the space-time resonance method (see, for example, \cite{GermainMasmoudiShatah_3dnls,GermainMasmoudiShatah_waterwaves}). A simple computation yields
    \[
\|v_0\|_{H^{0,\frac14}}\lesssim \|v_0\|_{H^3(\R^2)} + \|(|x|+|y|)v_0\|_{L^2(\R^2)},
    \]
    which means that the result of \cite{Anjolras} is completely contained in our Theorem \ref{thm:wp_critical}.
\end{rem}

\medskip
For $s+a<1/4$, by adapting the known result in the $H^s$ scale \cite{Kinoshita22}, we prove that \eqref{mZK2} is locally ill-posed in $H^{s,a}(\R^2)$ (in the sense of $C^3$ flows).

\begin{prop}\label{prop:illposed}
For $0\le a\le 1/4$ and $s+a<1/4$, the data-to-solution map associated with \eqref{mZK2} is not $C^3$ in the $H^{s,a}(\R^2)$ topology.
\end{prop}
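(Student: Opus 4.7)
The plan is to adapt the $C^3$-ill-posedness argument of \cite{Kinoshita22} (for $H^s$ with $s<1/4$) to the anisotropic $H^{s,a}$-scale. If the data-to-solution map associated with \eqref{mZK2} were $C^3$ at $v_0 = 0$ in the $H^{s,a}$-topology, then its third-order Taylor coefficient, the symmetric trilinear operator
\[
T_3(v_0)(t) := \int_0^t S(t-t')(\partial_x + \partial_y)\bigl(S(t')v_0\bigr)^3\, dt', \qquad S(t) := e^{-t(\partial_x^3 + \partial_y^3)},
\]
would necessarily be bounded from $H^{s,a}(\R^2)^3$ to $H^{s,a}(\R^2)$ at each fixed $t > 0$. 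It therefore suffices to exhibit a family $\{v_{0,N}\}_{N \geq 1}$ with $\|v_{0,N}\|_{H^{s,a}} \lesssim 1$ and $\|T_3(v_{0,N})(t_0)\|_{H^{s,a}} \to \infty$ along a sequence $N \to \infty$, for some small fixed $t_0 > 0$.

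My choice will be the anisotropically supported data
\[
\widehat{v_{0,N}}(\xi,\eta) := A_N\bigl(\chi_{R_N^+}(\xi,\eta)+\chi_{R_N^-}(\xi,\eta)\bigr), \quad R_N^+ := [N, N+N^{-1/2}]\times[1,2],\ R_N^- := -R_N^+,
\]
with amplitude $A_N := N^{1/4-(s+a)}$. Since the $H^{s,a}$-weight at frequencies with $|\xi|\sim N$ and $|\eta|\sim 1$ is comparable to $N^{s+a}$, one readily checks $\|v_{0,N}\|_{H^{s,a}} \sim 1$. The dominant contribution to $T_3(v_{0,N})(t_0)$ with output frequency near $(N,1)$ comes from the three permutations in which two input Fourier variables lie in $R_N^+$ and one in $R_N^-$; the remaining interactions place the output far from $(N,1)$. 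Expanding the phase
\[
\Phi = -3\bigl[(\xi_1+\xi_2)(\xi_2+\xi_3)(\xi_3+\xi_1)+(\eta_1+\eta_2)(\eta_2+\eta_3)(\eta_3+\eta_1)\bigr]
\]
about the central resonance yields $\Phi \approx -6(N\,\delta\xi_1\,\delta\xi_2 + \delta\eta_1\,\delta\eta_2)$, which by the choice of rectangle dimensions satisfies $|\Phi|\lesssim 1$ uniformly on the full integration domain, independently of $N$. Taking $t_0$ small enough that $\operatorname{Re}[(e^{it_0\Phi}-1)/(i\Phi)] \ge t_0/2$ throughout, this delivers a pointwise lower bound $|\widehat{T_3(v_{0,N})(t_0)}(\xi, \eta)| \gtrsim t_0 A_N^3$ on a subset of area $\gtrsim N^{-1/2}$ near $(N,1)$.

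Translating back via the same $N^{s+a}$-weight at the output, one obtains
\[
\|T_3(v_{0,N})(t_0)\|_{H^{s,a}} \gtrsim t_0 \cdot A_N^3 \cdot N^{-1/4} \cdot N^{s+a} = t_0\cdot N^{1/2-2(s+a)},
\]
which diverges as $N\to\infty$ under $s+a<1/4$, contradicting the boundedness of $T_3$. The main technical obstacle is the rigorous justification of the pointwise lower bound on $|\widehat{T_3(v_{0,N})(t_0)}|$: one must carefully track the complex time factor, the three permutation contributions (which add constructively by the symmetry of $\Phi$), and the $4$-dimensional measure of the valid integration domain. The uniform bound $|\Phi|\lesssim 1$ from the phase expansion and the freedom to choose $t_0$ small reduce this to a direct computation.
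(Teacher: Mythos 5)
Your proposal is correct and is essentially the paper's own argument: the same Bourgain-type failure-of-$C^3$ test applied to the third Picard iterate, with the same counterexample (Fourier support of width $N^{-1/2}$ in $\xi$ at $|\xi|\sim N$, $|\eta|\sim 1$, amplitude $N^{1/4-(s+a)}$), the same key observation $|\Phi|\lesssim 1$ on the support, and the same resulting growth $t\,N^{1/2-2(s+a)}$. The paper's proof is just the rigorous version of your sketched lower bound, so no further comparison is needed.
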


We complement Proposition \ref{prop:illposed} with a global ill-posedness result. More precisely, we show that only $a\ge 1/4$ allows for a \emph{global-in-time} fixed-point argument, thus highlighting even further the importance of the singular weights $|D_x|^{1/4}|D_y|^{-1/4}$ and $|D_x|^{-1/4}|D_y|^{1/4}$ in the proof of scattering for \eqref{mZK2}.

\begin{prop}\label{prop:illposed_2}
Let $0 \leq a<1/4$ and $s \in \R$. Then, for any $\delta>0$, the solution map $v_0 \mapsto v$ of \eqref{mZK2}, as a map from
\[
H_{\delta}^{s,a}(\R^2) = \{ f \in H^{s,a}(\R^2) \, : \, \|f\|_{H^{s,a}(\R^2)} < \delta\}
\]
to $C_b(\R;H^{s,a}(\R^2))$, fails to be $C^3$.
\end{prop}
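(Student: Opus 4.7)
The plan is to argue by contradiction. If the solution map were $C^3$ from $H^{s,a}_\delta(\R^2)$ into $C_b(\R;H^{s,a}(\R^2))$, then its third derivative at $v_0=0$ would coincide with the Picard iterate
\[
A_3(\phi)(t)=-\int_0^t e^{-(t-\tau)(\partial_x^3+\partial_y^3)}(\partial_x+\partial_y)\bigl(e^{-\tau(\partial_x^3+\partial_y^3)}\phi\bigr)^3\,d\tau,
\]
yielding the global-in-time trilinear bound $\sup_{t\in\R}\|A_3(\phi)(t)\|_{H^{s,a}}\lesssim\|\phi\|_{H^{s,a}}^3$. On the Fourier side,
\[
\widehat{A_3(\phi)(t)}(\xi,\eta)=-i(\xi+\eta)e^{it(\xi^3+\eta^3)}\int\prod_{j=1}^3\widehat\phi(\xi_j,\eta_j)\,\frac{e^{it\Phi}-1}{i\Phi}\,d\mu,
\]
with $\xi_3=\xi-\xi_1-\xi_2$, $\eta_3=\eta-\eta_1-\eta_2$ and the cubic resonance
\[
\Phi=-3(\xi_1+\xi_2)(\xi_2+\xi_3)(\xi_1+\xi_3)-3(\eta_1+\eta_2)(\eta_2+\eta_3)(\eta_1+\eta_3).
\]
Time growth of $A_3$ originates from near-resonant regions where $(e^{it\Phi}-1)/i\Phi\approx t$. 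As a preliminary reduction, one treats the case $s=0$ using the scaling symmetry $\phi_\lambda(x,y)=\lambda\phi(\lambda x,\lambda y)$ of \eqref{mZK2}: the norm $\|\phi_\lambda\|_{H^{0,a}}$ is exactly invariant while $\|\phi_\lambda\|_{H^{s,a}}\to\|\phi\|_{H^{0,a}}$ as $\lambda\to 0$ for any $s\in\R$ (by dominated convergence applied to $(1+\lambda^2(\xi^2+\eta^2))^s$), and together with the identity $A_3(\phi_\lambda)(t)=(A_3(\phi)(\lambda^3 t))_\lambda$ this transfers a hypothetical global trilinear bound from $H^{s,a}$ down to $H^{0,a}$.

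For the counterexample at $s=0$ and $a<1/4$, I would take $\widehat{\phi_N}$ concentrated on three small boxes of width $\delta_N$ centered at frequencies $(\xi^{(j)},\eta^{(j)})$, $j=1,2,3$, chosen so that: (i) $\Phi$ vanishes (or is small) at the base configuration, for instance by pairing the $\xi$-components so that $\xi^{(i)}+\xi^{(j)}=0$ for some pair, killing the $\xi$-triple product, combined with a compensating choice of $\eta$'s making the $\eta$-triple product also vanish or cancel with the $\xi$-part; (ii) the output frequency $(\sum\xi^{(j)},\sum\eta^{(j)})$ lies close to one of the axes $\{\xi\eta=0\}$, so that integrating the weight $|\xi|^a|\eta|^{-a}$ (or its dual) over the output support produces a singular factor of order $\delta_N^{1-2a}$ that amplifies $\|A_3(\phi_N)(t_N)\|_{H^{0,a}}$; (iii) the input frequencies stay bounded away from the axes, preventing analogous amplification of $\|\phi_N\|_{H^{0,a}}$. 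At a time $t_N\to\infty$ consistent with the phase variation on the support, tracking all the scale factors gives the ratio $\|A_3(\phi_N)(t_N)\|_{H^{0,a}}/\|\phi_N\|_{H^{0,a}}^3$ diverging precisely when $a<1/4$, contradicting the hypothesized bound.

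The main obstacle is calibrating the construction so that the divergence occurs exactly at the sharp threshold $a=1/4$, matching Theorem \ref{thm:wp_critical}. The relevant balance is between the $L^4$ Strichartz estimate $\||D_x|^{1/8}|D_y|^{1/8}S(t)f\|_{L^4}\lesssim\|f\|_{L^2}$ driving the scattering proof and the loss of dispersion near the axes $\{\xi\eta=0\}$: the weight $|D_x|^{1/4}|D_y|^{-1/4}$ exactly compensates this loss, and for $a<1/4$ the sequence $\phi_N$ above quantifies the failure. The bookkeeping is technical and parallels in spirit the argument for Proposition \ref{prop:illposed}, with the essential new feature that the contradiction necessarily invokes $t_N\to\infty$ rather than the fixed-time obstruction of the subcritical case.
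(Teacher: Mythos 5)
Your overall frame (Bourgain's argument: $C^3$ regularity of the flow at the origin forces a global-in-time bound $\sup_{t}\|A_3(\phi)(t)\|_{H^{s,a}}\lesssim\|\phi\|_{H^{s,a}}^3$ on the third Picard iterate, to be contradicted by an explicit family) is exactly the paper's reduction, and the scaling step reducing to $s=0$ is harmless. The gap is in the construction itself, specifically in your item (iii). If the input frequencies of $\phi_N$ are bounded away from the axes $\{\xi\eta=0\}$ (say $|\xi|\sim|\eta|\sim 1$ on $\supp\widehat{\phi_N}$), then $\|\phi_N\|_{H^{0,a}}\sim\|\phi_N\|_{H^{0,1/4}}\sim\|\phi_N\|_{L^2}$, and since the Fourier multiplier defining $\|\cdot\|_{H^{0,a}}$, namely $(|\xi|/|\eta|)^{a}+(|\eta|/|\xi|)^{a}$, is pointwise increasing in $a$, one has $\|u\|_{H^{0,a}}\le\|u\|_{H^{0,1/4}}$ for every $u$ and every $0\le a\le 1/4$. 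The paper's critical trilinear estimate \eqref{eq:multi} (Proposition \ref{prop:multi}, together with the embeddings between $Y$ and $C_b(\R;H^{0,1/4})$) then gives the \emph{uniform} global bound $\sup_t\|A_3(\phi_N)(t)\|_{H^{0,a}}\le\sup_t\|A_3(\phi_N)(t)\|_{H^{0,1/4}}\lesssim\|\phi_N\|_{H^{0,1/4}}^3\sim\|\phi_N\|_{H^{0,a}}^3$, no matter how close to an axis the output frequencies fall and no matter how small the resonance function is. So no divergence can be extracted from data supported away from the axes: the output-weight amplification you invoke in (ii) can never beat the critical theory. The whole point of the weights is that the obstruction lives on \emph{inputs} concentrating near an axis, which is precisely what (iii) forbids.

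There is also a quantitative problem even before that structural one: with three isotropic boxes of width $\delta_N$ and a merely near-resonant base configuration, the resonance function generically varies by $\sim\delta_N$ over the support, so coherence only persists up to $t_N\lesssim\delta_N^{-1}$, and tracking the factors gives a ratio that tends to zero, not infinity. The paper's example is built to avoid both issues: $\widehat{\varphi}_N$ is anisotropic, with $\xi$ near $\pm1$ in intervals of width $N^{-1}$ and $\eta$ of size \emph{and} width $\sim N^{-2/3}$ (so the input sits at distance $N^{-2/3}$ from the axis $\eta=0$, and its $H^{s,a}$ normalization costs a factor $N^{-2a/3}$ in amplitude); this makes $|\Phi|\lesssim N^{-2}$ on the support, keeps the free evolution coherent on a set of measure $\sim N^{11/3}$ up to times $t\sim N^{2}$, and after dualizing against a weighted copy of $\varphi_N$ the lower bound becomes $N^{1/3-4a/3}$, which diverges exactly for $a<1/4$. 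To repair your proposal you would need to move the input concentration onto (a neighborhood of) an axis and redo the bookkeeping with anisotropic scales of this type; as written, the family you describe cannot produce the contradiction.
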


\bigskip

The rest of this work is organized as follows. In Section \ref{sec:prelim}, we introduce the necessary tools concerning atomic spaces, define the appropriate resolution spaces for \eqref{mZK2} and recall the necessary Strichartz estimates. In Section \ref{sec:critical}, we prove the key multilinear estimate in the case $s=0, a=1/4$ and prove Theorem \ref{thm:wp_critical}. In Section \ref{sec:subcritical}, we give a brief proof of Theorem \ref{thm:wp_subcritical}. Finally, in Section \ref{sec:illposed}, we present the counterexamples for the existence of a $C^3$ flow, thus proving Propositions \ref{prop:illposed} and \ref{prop:illposed_2}.

\section{Function spaces and preliminary estimates}\label{sec:prelim}

\subsection{Atomic spaces.}
Let $\mathcal{Z}$ be the set of finite partitions 
$-\infty = t_0 < t_1 \cdots < t_K = \infty$ and let $\mathcal{Z}_0$ be the set of finite partitions 
$-\infty < t_0 < t_1 \cdots < t_K \leq \infty$.
% We first define the $U^p$ space. 
\begin{defn}[{\cite[Definition 2.1.]{HHK09}}] 
Let $1 \leq p < \infty$. 
For $\{t_k \}_{k=0}^K \in \mathcal{Z}$ and $\{ \phi_k \}_{k=0}^{K-1} \subset L^2(\R^2)$ with 
$\sum_{k=0}^{K-1} \| \phi_k \|_{L^2(\R^2)}^p =1$ and $\phi_0 =0$ we call the function $a : {{\mathbb R}}\to L^2(\R^2)$ given by
\begin{equation*}
a = \sum_{k=1}^K \chi_{[t_{k-1},t_k)} \phi_{k-1}
\end{equation*}
a $U^p$-atom. 
Furthermore, we define the atomic space
\begin{equation*}
U^p := \left\{ u = \sum_{j=1}^{\infty} \lambda_j a_j \, \biggl| \, a_j : U^p \textnormal{-atom}, \ 
\lambda_j \in {\mathbb C} \ \textnormal{such that } \sum_{j=1}^\infty |\lambda_j| < \infty \right\} 
\end{equation*}
with norm
\begin{equation*}
\| u \|_{U^p} := \inf \left\{ \sum_{j=1}^\infty |\lambda_j| \, \biggl| \, u = \sum_{j=1}^{\infty} \lambda_j a_j , \ 
\lambda_j \in {\mathbb C}, \ a_j : U^p \textnormal{-atom} \right\}.
\end{equation*}
\end{defn}
%%%%%%%%%%%%%%%%%%%%%%%%%%%%%%%%%%%%%%%%%%%%%%%%%%%%%%%%%
%%%%%%%%%%%%%%%%%%%%%%%%%%%%%%%%%%%%%%%%%%%%%%%%%%%%%%%%%
%%%%%%%%%%%%%%%%%%%%%%%%%%%%%%%%%%%%%%%%%%%%%%%%%%%%%%%%%
%%%%%%%%%%%%%%%%%%%%%%%%%%%%%%%%%%%%%%%%%%%%%%%%%%%%%%%%%
%%%%%%%%%%%%%%%%%%%%%%%%%%%%%%%%%%%%%%%%%%%%%%%%%%%%%%%%%
%%%%%%%%%%%%%%%%%%%%%%%%%%%%%%%%%%%%%%%%%%%%%%%%%%%%%%%%%
\begin{defn}[{\cite[Definition 2.3]{HHK09} and \cite[item \textnormal{(iii)}]{HHK09-2}}] 
Let $1 \leq p < \infty$. 
The space $V^p$ is defined as the normed space of all functions $v:{{\mathbb R}}\to L^2(\R^2)$ such that 
$\lim_{t \to \pm \infty} v(t)$ exist and for which the norm
\begin{equation*}
\| v \|_{V^p} := \sup_{ \{t_k\}_{k=0}^{K} \in \mathcal{Z}} 
\left( \sum_{k=1}^K \| v(t_k) - v(t_{k-1}) \|_{L^2(\R^2)}^p \right)^{1/p}
\end{equation*}
is finite, where we use the convention that $v( - \infty)= \lim_{t \to -\infty}v(t)$ and\footnote{Here, $v(\infty)$ does not need to coincide with $\lim_{t\to \infty}v(t)$.} $v (\infty) = 0$. 
Likewise, let $V^p_{-}$ denote the closed subspace of all $v \in V^p$ with $\lim_{t \to - \infty} v(t) =0$. We also define $V_{rc}^p$ ($V_{-,rc}^p$) as the closed subspace of all right-continuous $V^p$ ($V_-^p$) functions.
\end{defn}
%%%%%%%%%%%%%%%%%%%%%%%%%%%%%%%%%%%%%%%%%%%%%%%%%%%%%%%%%
%%%%%%%%%%%%%%%%%%%%%%%%%%%%%%%%%%%%%%%%%%%%%%%%%%%%%%%%%
%%%%%%%%%%%%%%%%%%%%%%%%%%%%%%%%%%%%%%%%%%%%%%%%%%%%%%%%%
For the properties of $U^p$ and $V^p$ spaces, see Propositions 2.2 and 2.4 in \cite{HHK09}, respectively
(see also \cite{HHK09-2}). The connection between $U^p$ and $V^p$ spaces is given by the following duality result.
\begin{prop}[{\cite[Theorem 2.8 and Proposition 2.10]{HHK09}, \cite{HHK09-2}}]\label{prop-duality}
Let $1<p<\infty$, $u \in V^1_{-}$ be absolutely continuous on compact intervals and $v \in V^{p'}$. 
Then,
\begin{equation*}
\| u \|_{U^p} = \sup_{\|v \|_{V^{p'}} =1} \left| \int_{-\infty}^\infty \langle u' (t), v(t) \rangle_{L_x^2} dt \right|.
\end{equation*}
\end{prop}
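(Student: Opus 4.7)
The plan is to establish the two inequalities
\[
\sup_{\|v\|_{V^{p'}}=1} \left| \int_{-\infty}^\infty \langle u'(t), v(t) \rangle_{L^2_x} dt \right| \leq \|u\|_{U^p}
\quad\text{and}\quad
\|u\|_{U^p} \leq \sup_{\|v\|_{V^{p'}}=1} \left| \int_{-\infty}^\infty \langle u'(t), v(t) \rangle_{L^2_x} dt \right|
\]
separately, roughly following \cite{HHK09}.

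For the upper bound, I would first verify the estimate in the special case when $v$ is a step function subordinate to a partition $\{t_k\}_{k=0}^K \in \mathcal{Z}$, say $v = \sum_{k=1}^K \chi_{[t_{k-1},t_k)} \psi_{k-1}$ with $\psi_K := 0$. Since $u$ is absolutely continuous on compact intervals with $u(-\infty)=0$, a direct computation combined with summation by parts yields
\[
\int_{-\infty}^\infty \langle u'(t), v(t) \rangle_{L^2_x} dt = \sum_{k=1}^{K} \langle u(t_k), \psi_{k-1}-\psi_k \rangle_{L^2_x}.
\]
Taking $u$ to be a single $U^p$-atom and applying Hölder with exponents $p$ and $p'$ then produces a bound by $\|v\|_{V^{p'}}$. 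Linearity across the atomic decomposition of $u$ upgrades this to the bound $\|u\|_{U^p} \|v\|_{V^{p'}}$ for arbitrary $u\in U^p$; finally, a density argument (approximating any $v \in V^{p'}$ by step functions in the $V^{p'}$ topology) extends the inequality to general $v$.

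For the reverse inequality, the strategy is to construct, for each $\varepsilon>0$, a test function $v_\varepsilon \in V^{p'}$ of norm at most one for which the pairing is within $\varepsilon$ of $\|u\|_{U^p}$. The cleanest route is via Hahn--Banach: the map $v \mapsto \int \langle u', v\rangle dt$ is a bounded linear functional on the subspace of step functions inside $V^{p'}$, whose operator norm equals $\|u\|_{U^p}$ by the upper bound and by testing against step approximations of a near-optimal atomic decomposition of $u$. Extending this functional by Hahn--Banach and then realizing it as integration against an element of $V^{p'}$ yields the desired test function $v_\varepsilon$.

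The main obstacle is this second direction, and specifically the identification of extended functionals on step functions with genuine $V^{p'}$ elements; this is the content of the structural result that the bilinear form $B(u,v) := \int \langle u'(t), v(t)\rangle dt$ realizes $V^{p'}$ as an isometric subspace (indeed, essentially as the dual) of $U^p$. Because $V^{p'}$ is not reflexive, one cannot simply appeal to weak compactness: instead, the argument proceeds by a Radon--Nikodym-type construction, recovering the values $v(t_k)$ at partition points from the action of the functional on indicator-valued test maps and then verifying the $V^{p'}$ bound. The required technical machinery is already developed in \cite{HHK09, HHK09-2}, so I would simply invoke it at this step rather than reproduce it.
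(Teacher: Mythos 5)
The paper offers no proof of this proposition at all --- it is quoted verbatim from \cite{HHK09} (Theorem 2.8 and Proposition 2.10) --- so the only meaningful comparison is with the argument there, which your sketch broadly follows. Two of your steps, however, have genuine gaps. First, the extension of the easy inequality from step functions to all of $V^{p'}$ cannot go through ``density of step functions in the $V^{p'}$ topology'': step functions are \emph{not} norm-dense in $V^{p'}$ (their closure is a proper closed subspace, the familiar obstruction from $p$-variation theory that motivates working with subspaces such as $V^p_{rc}$). The fix is different and cheaper: sample $v$ on a grid, so that $\|v_n\|_{V^{p'}}\le\|v\|_{V^{p'}}$ holds automatically because every increment of $v_n$ is an increment of $v$, and pass to the limit using pointwise convergence and dominated convergence (here $u'\in L^1_tL^2_x$ since $u\in V^1_-$ is absolutely continuous); norm convergence is neither available nor needed. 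Relatedly, the H\"older step as displayed is wrong: bounding $\sum_k\langle u(t_k),\psi_{k-1}-\psi_k\rangle$ by $\bigl(\sum_k\|u(t_k)\|_{L^2}^p\bigr)^{1/p}\bigl(\sum_k\|\psi_{k-1}-\psi_k\|_{L^2}^{p'}\bigr)^{1/p'}$ fails for a single atom sampled at many points of one of its steps (the first factor is then unbounded, while $\|a\|_{U^p}=1$); one must first telescope the jumps of $v$ inside each step of the atom, so that the $\ell^p$ sum runs over the atom's own pieces.

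Second, the Hahn--Banach argument for the reverse inequality is circular as described. Extending the functional $v\mapsto\int\langle u',v\rangle\,dt$ from step functions to $V^{p'}$ gives no lower bound on the supremum, and the assertion that its operator norm equals $\|u\|_{U^p}$ ``by testing against step approximations of a near-optimal atomic decomposition of $u$'' is precisely the statement to be proved, not an input: atoms of $u$ are not test functions $v$, and near-optimal decompositions of $u$ only yield upper bounds. The correct architecture --- which is the one in \cite{HHK09} --- applies Hahn--Banach on the $U^p$ side: pick $\ell\in(U^p)^*$ with $\|\ell\|=1$ and $\ell(u)=\|u\|_{U^p}$; invoke the duality theorem (Theorem 2.8 in \cite{HHK09}) stating that $v\mapsto B(\cdot,v)$ is an isometric isomorphism of $V^{p'}$ onto $(U^p)^*$, so $\ell=B(\cdot,v)$ for some unit $v$; and finally identify $B(u,v)=\int\langle u'(t),v(t)\rangle\,dt$ for $u\in V^1_-$ absolutely continuous --- this identification itself requires an argument, since atoms have vanishing a.e.\ derivative and the formula is not literally atom-wise. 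Your closing paragraph does point at the right structural result and proposes to cite it, which is legitimate (the paper does no more), but the lower-bound route you describe would not produce the desired $v_\varepsilon$ unless reorganized along these lines.
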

% \Kinc{The following is valid?
% \begin{prop}[Duality]
% Let $t \geq 0$ and $f \in L^1([0,\infty);L^2(\R^2))$. Then, we have
% \[
% \biggl\| \int_0^t S(t-t')f(t') dt'\biggr\|_{U_S^2} \lesssim \sup_{\|g\|_{V_S^2}=1} \biggl| \int_0^{\infty} \int_{\R^2} f(t,x,y) g(t,x,y) dt dx dy \biggr|
% \]
% \end{prop}
% }
%%%%%%%%%%%%%%%%%%%%%%%%%%%%%%%%%%%%%%%%%%%%%%%%%%%%%%%%%
%%%%%%%%%%%%%%%%%%%%%%%%%%%%%%%%%%%%%%%%%%%%%%%%%%%%%%%%%
%%%%%%%%%%%%%%%%%%%%%%%%%%%%%%%%%%%%%%%%%%%%%%%%%%%%%%%%%
The following definitions (see \cite[Definition 2.15]{HHK09}) correspond to the atomic spces adapted to the linear flow $e^{ -t(\partial_x^3 + \partial_y^3)}$ generated by \eqref{mZK2}.
\begin{defn} 
We define\\
$ \, $(i) $U_{ZK}^p = e^{ - \cdot (\partial_x^3 + \partial_y^3) }\ U^p$ with norm $\|u \|_{U_{\text{ZK}}^p} = \|e^{ \cdot(\partial_x^3 + \partial_y^3)  } u\|_{U^p}$,\\
(ii) $V_{\text{ZK}}^p = e^{ -\cdot(\partial_x^3 + \partial_y^3) }\ V^p$ with norm $\|u \|_{V_{\text{ZK}}^p} = \|e^{ \cdot(\partial_x^3 + \partial_y^3) } u\|_{V^p}$. 

$V_{rc,ZK}^p$ and $V_{-,rc,ZK}^p$ are defined in a similar way.
\end{defn}

% \begin{prop}[Duality, \cite[]{HHK09}]
% Let $t \geq 0$ and $f \in L^1([0,\infty);L^2(\R^2))$. Then, we have
% \[
% \biggl\| \int_0^t S(t-t')f(t') dt'\biggr\|_{U_S^2} \lesssim \sup_{\|g\|_{V_S^2}=1} \biggl| \int_0^{\infty} \int_{\R^2} f(t,x,y) g(t,x,y) dt dx dy \biggr|
% \]
% \end{prop}

Next, we introduce the interpolation estimate shown in \cite{HHK09} (\cite[Theorem B.18]{KT18}).
\begin{prop}[{\cite[Proposition 2.20]{HHK09}}]
For $1 \leq p < q$ and a Banach space $E$, suppose that a bounded linear operator $T$ satisfies
\[
\|Tu\|_E \leq C_p \|u\|_{U_{\text{ZK}}^p},  \quad \|Tu\|_E \leq C_q \|u\|_{U_{\text{ZK}}^q}, 
\]
for all $u \in U_{\text{ZK}}^p$. Then, for all $u \in V_{-,rc,S}^p$ we have
\[
\|Tu\|_{E} \leq \frac{4 C_p}{\alpha_{p,q}} \Bigl( \log \frac{C_q}{C_p} + 2 \alpha_{p,q} + 1 \Bigr) \|u\|_{V_{\text{ZK}}^p},
\]
where $\alpha_{p,q} = (1-p/q) \log 2$.
\end{prop}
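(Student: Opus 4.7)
The statement is the standard $U^p$--$V^p$ interpolation lemma of Koch--Tataru \cite{KochTataru05} and Hadac--Herr--Koch \cite{HHK09}; the plan is to reproduce the classical proof. First, since $U^p_{\text{ZK}}$ and $V^p_{\text{ZK}}$ are obtained from $U^p$ and $V^p$ by conjugation with the unitary group $e^{-t(\partial_x^3 + \partial_y^3)}$, composing $T$ with this flow reduces the task to the \emph{flat} setting: it is enough to show that a linear operator $\tilde T$ bounded from $U^p$ into $E$ with constant $C_p$ and from $U^q$ into $E$ with constant $C_q$ extends to $V^p_{-,rc}$ with the advertised constant. I would normalize $\|u\|_{V^p} = 1$.

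Next, I would build a dyadic family of greedy step-function approximants of $u$. For each integer $n \ge 1$, set $M_n = 2^{-n}$ and construct a partition $\{t_k^n\}_k$ by letting $t_0^n = -\infty$ and defining $t_{k+1}^n$ as the smallest time after $t_k^n$ at which $\|u(t_{k+1}^n) - u(t_k^n)\|_{L^2} \ge M_n$. The normalization $\|u\|_{V^p} = 1$ forces this algorithm to terminate in at most $N_n \lesssim M_n^{-p} = 2^{np}$ steps, and the associated step function $u_n$, defined to equal $u(t_k^n)$ on $[t_k^n, t_{k+1}^n)$, obeys $\|u - u_n\|_{L^\infty_t L^2_x} \le M_n$. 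Viewing the increment $w_n := u_n - u_{n-1}$ (with $u_0 := 0$) as a step function on the joint partition, with at most $N_n + N_{n-1} \lesssim 2^{np}$ jumps of $L^2$-size $\lesssim M_{n-1}$, this should yield the pair of bounds
\[
\|w_n\|_{U^p} \lesssim 1, \qquad \|w_n\|_{U^q} \lesssim 2^{-n(1 - p/q)},
\]
with $u = \sum_{n \ge 1} w_n$ converging in $L^\infty_t L^2_x$.

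Applying $\tilde T$ term by term and exploiting both hypotheses, the triangle inequality gives
\[
\|\tilde T u\|_E \le \sum_{n \ge 1} \min\bigl( C_p \|w_n\|_{U^p},\; C_q \|w_n\|_{U^q} \bigr) \lesssim \sum_{n \ge 1} \min\bigl( C_p,\; C_q\, 2^{-n(1-p/q)} \bigr).
\]
The two entries of the minimum balance at $n_\star := \alpha_{p,q}^{-1} \log(C_q/C_p)$. Splitting the series there produces a contribution of order $n_\star C_p$ from $n \le n_\star$ plus a geometric tail of order $C_p/\alpha_{p,q}$ from $n > n_\star$, which together reproduce the stated constant $4 C_p \alpha_{p,q}^{-1}\bigl(\log(C_q/C_p) + 2\alpha_{p,q} + 1\bigr)$ up to absolute numerical factors.

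The principal obstacle lies in the construction of the step-function approximants: one must verify that each increment $w_n$ genuinely admits $U^p$- and $U^q$-atomic representations with the asserted norms on the refined partition $\{t_k^n\} \cup \{t_k^{n-1}\}$. This is essentially a combinatorial bookkeeping exercise, leveraging the telescoping identity together with the convergence $\|u - u_n\|_{L^\infty_t L^2_x} \to 0$ and the closedness of $U^p$ under absolutely convergent atomic sums. No further analytic estimate beyond these structural observations is required.
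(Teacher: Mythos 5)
The paper does not prove this proposition at all --- it is quoted verbatim from \cite[Proposition 2.20]{HHK09} (see also \cite[Theorem B.18]{KT18}) --- and your argument is precisely the standard proof of that cited result: conjugate by the unitary group to reduce to flat $U^p$/$V^p$, build the greedy $2^{-n}$-oscillation step functions (right-continuity of $u\in V^p_{-,rc}$ is what makes the greedy stopping times work and gives $N_n\lesssim 2^{np}$), telescope to get $\|w_n\|_{U^p}\lesssim 1$, $\|w_n\|_{U^q}\lesssim 2^{-n(1-p/q)}$, and sum the minimum of the two bounds, splitting at $n_\star=\alpha_{p,q}^{-1}\log(C_q/C_p)$. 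Two minor points to tighten: the termwise application of $T$ should be justified by noting that $\sum_n w_n$ converges to $u$ in $U^q$ (since $\sum_n\|w_n\|_{U^q}<\infty$, this is exactly the embedding $V^p_{-,rc}\hookrightarrow U^q$), not merely in $L^\infty_t L^2_{x,y}$; and your bookkeeping yields the stated bound only up to an absolute constant rather than the explicit factor $\frac{4C_p}{\alpha_{p,q}}\bigl(\log\frac{C_q}{C_p}+2\alpha_{p,q}+1\bigr)$, which is harmless for every application in this paper.
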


%%%%%%%%%%%%%%%%%%%%%%%%%%%%%%%%%%%%%%%%%%%%%%%%%%%%%%%%%
%%%%%%%%%%%%%%%%%%%%%%%%%%%%%%%%%%%%%%%%%%%%%%%%%%%%%%%%%
%%%%%%%%%%%%%%%%%%%%%%%%%%%%%%%%%%%%%%%%%%%%%%%%%%%%%%%%%

\subsection{Littlewood-Paley projections and resolution spaces}

In what follows, we $L, R$ will denote dyadic numbers in $2^{\N_0}$ and $M, N$ elements in $2^\Z$. For the sake of simplicity, we abbreviate 
$$\displaystyle\sum_{N\in 2^\Z} \text{ simply as }
\displaystyle\sum_N.$$
We define
$$
\omega(\xi,\eta)=\frac{|\xi|^\frac14}{|\eta|^\frac14} +\frac{|\eta|^\frac14}{|\xi|^\frac14},\qquad\omega_{M,N}=\frac{M^{\frac14}}{N^{\frac14}}+\frac{N^{\frac14}}{M^{\frac14}}.
$$

Take $\psi\in C_c^\infty(\R)$ to be a nonnegative even function such that
\begin{equation}\label{eq:psi}
    \psi(\zeta)=\begin{cases}
    1,& \text{if }|\zeta|\leq 1\\
    0,& \text{if }|\zeta|\ge 2,
\end{cases}
\end{equation}
and define
\begin{equation}\label{eq:psin}
    \psi_N(\zeta)=\psi\left(\frac{\zeta}{N}\right)-\psi\left(\frac{2\zeta}{N}\right),\quad \zeta\in \R
\end{equation}
so that $\psi_N$ is supported on $[N/2,2N]$ and $\sum_N \psi_N=1$. Given $R\in 2^{\N_0}$, we set
\[
\phi_R=\psi_R,\  \text{ for } R\neq 0, \qquad\text{ and }\qquad \phi_1=\sum_{N\le 0} \psi_N.
\]
% With a slight abuse of notation, we also define the two-dimensional version
% $$
% \psi_N(\zeta_1,\zeta_2):=\psi_N(|(\zeta_1,\zeta_2)|).
% $$

Define the Littlewood-Paley frequency projections
$$
(P_{M,N}u)^\wedge(\xi,\eta) = \psi_M(\xi)\psi_N(\eta)\hat{u}(\xi,\eta),
$$
$$(P_{R}u)^\wedge(\xi,\eta) = \phi_R(|(\xi,\eta)|)\hat{u}(\xi,\eta),
$$
for $u\in \mathcal{S}(\R^2)$, and
$$
(Q_Lu)^\wedge(\tau,\xi,\eta)=\phi_L(\tau-\xi^3-\eta^3)\hat{u}(\tau,\xi,\eta),
$$
for $u\in \mathcal{S}(\R\times \R^2)$.

\bigskip
We now define the resolution spaces. In the critical case $s=0$, $a=1/4$, we consider the space $Y$ defined as the closure of all 
$$u \in C({\mathbb R}; H^{0,\frac14}(\R^2)) \cap |D_x|^{\frac14}|D_y|^{-\frac14} U^2_{\text{ZK}} \cap |D_x|^{-\frac14}|D_y|^{\frac14} U^2_{\text{ZK}}$$ under the norm
\begin{equation*}
\| u \|_{Y} := \biggl( \sum_{M,N} \omega_{M,N}^2 \| P_{M,N} u \|_{U^2_{\text{ZK}}}^2 \biggr)^{1/2}.
\end{equation*}
For the subcritical case, we will work with a version of Bourgain spaces adapted to $H^{s,a}(\R^2)$. Given $b\in \R$, $s\ge 0$ and $0\le a\le\frac{1}{4}$, define
$$
X^{s,a,b}=\left\{u\in \mathcal{S}'(\R\times \R^2): \jap{\tau-\xi^3-\eta^3}^b\jap{(\xi,\eta)}^s\omega(\xi,\eta)^{4a}\hat{u}(\tau,\xi,\eta)\in L^2(\R\times \R^2) \right\}
$$
endowed with the norm
\begin{align*}
    \|u\|_{X^{s,a,b}}&=\left\| \jap{\tau-\xi^3-\eta^3}^b\jap{(\xi,\eta)}^s\omega(\xi,\eta)^{4a}\hat{u}\right\|_{L^2} \\&\simeq \left(\sum_{L,R,M,N} L^{2b}R^{2s}\omega_{M,N}^{8a} \|P_{R,M,N}Q_Lu\|_{L^2}^2\right)^{\frac12}
\end{align*}
% Here $P_{M,N}$ is the Littlewood-Paley operators defined by $\F_{x,y}^{-1} \psi_M(\xi) \psi_N(\eta)\F_{x,y}$ where $\psi_N(\xi)=\psi(\xi/N)$ and $\sum_{N\in 2^{\Z}} \psi_N(\xi)=1$ . 
%In addition, let us define the norm
%\begin{equation*}
%\| u \|_{Z} := \biggl( \sum_{M,N\in 2^{\Z}} (N^{\frac12}M^{-\frac12} + N^{-\frac12} M^{\frac12}) \| P_{M,N} u \|_{V^2_{\text{ZK}}}^2 \biggr)^{1/2}.
%\end{equation*}
%%%%%%%%%%%%%%%%%%%%%%%%%%%%%%%%%%%%%%%%%%%%%%%%%%%%%%%%%
%%%%%%%%%%%%%%%%%%%%%%%%%%%%%%%%%%%%%%%%%%%%%%%%%%%%%%%%%
%%%%%%%%%%%%%%%%%%%%%%%%%%%%%%%%%%%%%%%%%%%%%%%%%%%%%%%%%
For $T>0$, the local-in-time Bourgain space is defined as
\begin{equation}
    X_{T}^{ s,a,b} = \left\{u\in C([0,T]; H^{s,a}(\R^2)): u=v\big|_{[0,T]} \mbox{ for some }v\in X^{s,a,b}\right\},
\end{equation}
 endowed with the norm
\begin{align*}
\|u\|_{X_{T}^{s,a,b}}&=\text{inf}\left\{\|v\|_{X^{s,a,b}}: v=u \text{ on } [0,T]\right\}.
\end{align*}
% \begin{rem}[Remark 2.23. \cite{HHK09}]
% Let $E$ be a Banach space of continuous functions $f : {{\mathbb R}}\to H$, for some Hilbert space $H$. 
% For the interval $I \subset {\mathbb R}$, let us define
% \begin{equation*}
% E(I) = \{ u \in C(I, H) \, | \, \textnormal{There exists } \tilde{u} \in E \, : \, \tilde{u}(t)=u(t), \ t \in I \}
% \end{equation*}
% endowed with the norm $\|u\|_{E(I)}=\inf\{\|\tilde{u}\|_{E} \, |\, \tilde{u} \, : \, \tilde{u}(t)=u(t), \ t \in I \}$.
% \end{rem}
%
%The following property of $U^2$ is important when using the almost orthogonality.
%\begin{prop}[Proposition~4.3 in \cite{CH18}]\label{prop2.6}
%Let $ C \in (0 , \infty)$. For $k \in \N$, let $T_k: L^2(\R^2) \to L^2(\R^2)$ be a linear and bounded operator (acting spatially) which satisfies
%\[
%\Bigl( \sum_{k \in \N} \|T_k f \|_{L^2}^2 \Bigr)^{\frac12} \leq C \|f\|_{L^2},
%\]
%for all $f \in L^2$. Then, for all $u \in U^2$, it holds that
%\[
%\Bigl( \sum_{k \in \N} \|T_k u \|_{U^2}^2 \Bigr)^{\frac12} \leq C \|u\|_{U^2}.
%\]
%\end{prop}

\bigskip
The above definitions allow us to present precise statements corresponding to Theorems \ref{thm:wp_critical} and \ref{thm:wp_subcritical}.

\begin{thm}\label{thm:wp_critical_v2}
There exists $\delta>0$ such that, if 
$\|v_0\|_{H^{0,1/4}} < \delta$, then there exists a unique $v \in Y$, depending smoothly on the initial data, such that
   \begin{equation}
        v(t)=e^{-t(\partial_x^3+\partial_y^3)}v_0 + \int_0^t e^{-(t-t')(\partial_x^3+\partial_y^3)}(\partial_x+\partial_y)(v^3)(t')dt',\quad t\ge0.
    \end{equation}
Moreover, there exists $v_+\in H^{0,1/4}(\R^2)$ such that
\begin{equation}
    \lim_{t\to \infty}e^{t(\partial_x^3+\partial_y^3)}v(t)= v_+ \quad\text{in }H^{0,1/4}(\R^2).
\end{equation}
\end{thm}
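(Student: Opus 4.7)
The plan is to apply a contraction argument to the Duhamel operator
$$\Phi(v)(t)=e^{-tL}v_0 - \int_0^t e^{-(t-t')L}(\partial_x+\partial_y)(v^3)(t')\,dt',\qquad L:=\partial_x^3+\partial_y^3,$$
on a small ball of the resolution space $Y$. The linear term is benign: by the definition of $U^2_{\text{ZK}}$ one has $\|P_{M,N}e^{-tL}v_0\|_{U^2_{\text{ZK}}}=\|P_{M,N}v_0\|_{L^2}$, so squaring and summing against $\omega_{M,N}^2$ reproduces exactly $\|v_0\|_{H^{0,1/4}}$. Smoothness in $v_0$ follows from the polynomial nature of $\Phi$ once the nonlinear estimate is established.

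The core of the work is the global trilinear estimate
\begin{equation}\label{eq:trilinear_goal}
\left\|\int_0^t e^{-(t-t')L}(\partial_x+\partial_y)(v_1v_2v_3)(t')\,dt'\right\|_Y \lesssim \prod_{j=1}^3\|v_j\|_Y.
\end{equation}
After a Littlewood-Paley decomposition, the $U^2/V^2$ duality of Proposition \ref{prop-duality} reduces \eqref{eq:trilinear_goal} to controlling, for each quadruple $(M_j,N_j)_{j=1}^{4}$, the multilinear forms
$$I_{\mathbf{M},\mathbf{N}}=\iiint (\partial_x+\partial_y)\bigl(P_{M_1,N_1}v_1\,P_{M_2,N_2}v_2\,P_{M_3,N_3}v_3\bigr)\,\overline{P_{M_4,N_4}w}\,dt\,dx\,dy,$$
with $v_1,v_2,v_3\in U^2_{\text{ZK}}$ and $w\in V^2_{\text{ZK}}$. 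Each such form will be bounded by combining the two tools spelled out in the introduction: the $L^4$-Strichartz estimate $\||D_x|^{1/8}|D_y|^{1/8}e^{-tL}f\|_{L^4_{t,x,y}}\lesssim\|f\|_{L^2}$ (via a Hölder argument in $L^4\cdot L^4\cdot L^4\cdot L^4$), whose $|D_x|^{1/8}|D_y|^{1/8}$ factor is tailored to the weight $\omega_{M,N}$, and the $L^2$-bilinear Strichartz estimate for transversal frequency supports, which provides a smoothing factor proportional to the transversality of the two interacting wave vectors.

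The decisive difficulty is the \emph{singular regime}, where at least one of the frequency boxes $(M_i,N_i)$ lies close to the degenerate set $\xi\eta=0$; there the weight $\omega_{M_i,N_i}$ blows up and the pure $L^4$ bound cannot simultaneously absorb the derivative loss from $(\partial_x+\partial_y)$ and the singular weight. The key observation must be that on the support of $\hat{v_1}*\hat{v_2}*\hat{v_3}*\hat{w}$ one has $\xi_1+\xi_2+\xi_3+\xi_4=\eta_1+\eta_2+\eta_3+\eta_4=0$, which forces at least one compensating factor $\omega_{M_j,N_j}$ to be large whenever another one is small. Combined with the bilinear Strichartz estimate applied to a pair of frequencies with sizeable transversality (guaranteed by the same momentum relations when one factor concentrates near $\xi\eta=0$), this provides enough room to beat the derivative and the singular weight. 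A straightforward Cauchy-Schwarz summation over $(\mathbf{M},\mathbf{N})$ then delivers \eqref{eq:trilinear_goal}; I expect this case analysis, analogous to the one carried out for the $H^{1/4}$ theory in \cite{Kinoshita22} but now weighted by $\omega_{M,N}$, to be the technical heart of the paper.

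Once \eqref{eq:trilinear_goal} is established, the Banach fixed-point theorem on a small ball of $Y$ produces a unique solution $v$ depending analytically on $v_0$ provided $\|v_0\|_{H^{0,1/4}}<\delta$. Scattering then comes essentially for free from the atomic structure: by \eqref{eq:trilinear_goal} the Duhamel term belongs, block by block, to the weighted $U^2_{\text{ZK}}$, and every element of $U^2_{\text{ZK}}$ admits an $L^2$ limit after twisting by $e^{tL}$ as $t\to\infty$. Thus, for each $(M,N)$, the quantity $e^{tL}P_{M,N}v(t)$ has an $L^2$-limit $P_{M,N}v_+$, with
$$v_+ = v_0 - \int_0^{\infty}e^{t'L}(\partial_x+\partial_y)(v^3)(t')\,dt'.$$
The weighted $\ell^2$-summability in $(M,N)$ furnished by $\|v\|_Y<\infty$ allows dominated convergence on the Fourier side to upgrade the block-wise convergence to convergence of $e^{tL}v(t)$ to $v_+$ in $H^{0,1/4}(\R^2)$.
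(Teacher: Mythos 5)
Your overall frame (contraction on a small ball of $Y$, reduction of the trilinear bound by Littlewood--Paley decomposition and the $U^2$--$V^2$ duality of Proposition \ref{prop-duality}, then scattering from the $U^2$ structure plus weighted $\ell^2$ summation) coincides with the paper's, and the linear, fixed-point and scattering steps you describe are correct. The problem is that the entire burden of Theorem \ref{thm:wp_critical_v2} lies in the dyadic multilinear estimate (the paper's Proposition \ref{prop:multi} together with the summation Lemma \ref{lem:reduct}), and you do not prove it: you explicitly defer the case analysis ("I expect this case analysis \dots to be the technical heart"), offering only the heuristic that the convolution constraints force a large compensating weight $\omega_{M_j,N_j}$ and enough transversality for the bilinear Strichartz estimate. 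That heuristic is not sufficient, for two concrete reasons. First, the dual function is only in $V^2_{\text{ZK}}$, while the Strichartz inputs are $U^2$/$U^4$ bounds; one must interpolate (the $U^p$--$V^p$ embedding with logarithmic loss, Proposition 2.7 of the paper) and then absorb the resulting $\ln\langle\cdot\rangle$ factors by $\epsilon$-powers of frequency ratios --- an issue your sketch never addresses, and which dictates the precise form of the gains $(M_{\min}/M_{\max})^{0^+}(N_{\min}/N_{\max})^{0^+}$ needed for the Schur-type summation condition \eqref{eq:kernel}. Second, the genuinely hard regime is not "one factor concentrated near $\xi\eta=0$ versus a compensating weight": it is the near-resonant configuration $M_{\max}\sim M_{\min}$ with $\xi_1\simeq-\xi_2\simeq\xi_3\simeq-\xi_4$ (the paper's Case 2C), where there is no transversality in $\xi$ at all, so neither of your two tools applies directly. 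There the paper must decompose the output $\xi$-frequency of the paired products at the threshold $\mathcal{M}=M_{\max}^{-1/2}N_{\max}N_{3\text{rd}}^{1/2}$, use translation invariance ($\sup_{x_1,x_2}$ of shifted products) to re-pair factors after the $P_{|\xi|\sim M}$ projection, and in the subcase $N_3\gg N_4$ exploit the resonance function $\Phi$ to extract a modulation gain $\gtrsim N_1^2N_3$, combining the high-modulation bound \eqref{eq:highmod} with Bernstein and the $L^4$ Strichartz estimate. None of this is recoverable from the $L^4$ + transversal-bilinear toolkit you list, so as written the proposal has a genuine gap precisely at the step that makes the theorem true.
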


\begin{thm}\label{thm:wp_subcritical_v2}
   Fix $s,a\ge 0$ with $a<1/4$ and $s+a\ge 1/4$. Given $v_0\in H^{s,a}(\R^2)$, there exist $T=T(\|v_0\|_{H^{s,a}})$ and a unique $v\in X^{s,a,\frac{1}{2}^+}_T$, depending smoothly on the initial data, such that
    \begin{equation}
      v(t)=e^{-t(\partial_x^3+\partial_y^3)}v_0 + \int_0^t e^{-(t-t')(\partial_x^3+\partial_y^3)}(\partial_x+\partial_y)(v^3)(t')dt',\quad t\in[0,T],
    \end{equation}
\end{thm}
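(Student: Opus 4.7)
The plan is a standard Bourgain $X^{s,b}$ Picard iteration in $X^{s,a,1/2+\epsilon}_T$ for some small $\epsilon > 0$. The linear estimate
\[
\|e^{-t(\partial_x^3 + \partial_y^3)}v_0\|_{X^{s,a,1/2+\epsilon}_T} \lesssim \|v_0\|_{H^{s,a}}
\]
and the Duhamel/extension estimate
\[
\Bigl\|\int_0^t e^{-(t-t')(\partial_x^3 + \partial_y^3)} F(t')\, dt'\Bigr\|_{X^{s,a,1/2+\epsilon}_T} \lesssim T^{\epsilon'}\|F\|_{X^{s,a,-1/2+\epsilon}_T}
\]
(with a small $T^{\epsilon'}$ gain from the smoothing in the modulation exponent $b$) are routine for Bourgain spaces associated with the ZK flow, since the $X^{s,a,b}$ norm is an unconditional weighted-$L^2$ norm on the Fourier side. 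Combined with the Banach fixed point theorem, these reduce the problem to proving the trilinear estimate
\[
\bigl\|(\partial_x + \partial_y)(u_1 u_2 u_3)\bigr\|_{X^{s,a,-1/2+\epsilon}} \lesssim \prod_{i=1}^3 \|u_i\|_{X^{s,a,1/2+\epsilon}},
\]
for $(s,a)$ in the stated range. Smooth dependence on the initial data and uniqueness follow from the trilinearity of the nonlinearity.

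It suffices to establish the trilinear estimate on the critical line $s + a = 1/4$: whenever $s + a > 1/4$, standard persistence-of-regularity arguments (combined with a small $T^{\epsilon}$ loss) deduce the general case from the critical-line estimate via frequency localization. On the line $s + a = 1/4$ with $0 \leq a \leq 1/4$, there are two natural endpoint trilinear estimates. The endpoint $(s,a) = (1/4, 0)$ corresponds to the sharp $X^{s,b}$ trilinear bound proved by the second author in \cite{Kinoshita22}. The endpoint $(s,a) = (0, 1/4)$ must be extracted from the $U^2/V^2$-based multilinear analysis carried out in Section \ref{sec:critical} for the proof of Theorem \ref{thm:wp_critical_v2}; the extraction goes through the standard embedding chain $X^{s,1/2+\epsilon} \hookrightarrow U^2_{\text{ZK}} \hookrightarrow V^2_{\text{ZK}} \hookrightarrow X^{s,1/2-\epsilon}$, translated to the $\omega$-weighted setting, at the price of a harmless $T^{\epsilon}$ factor.

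The final step is multilinear complex interpolation between the two endpoints. Parametrizing the critical line by $\theta = 4a \in [0, 1)$ so that $(s, a) = \bigl((1-\theta)/4,\ \theta/4\bigr)$, the Fourier-side weight $\jap{(\xi, \eta)}^s \omega(\xi,\eta)^{4a}$ defining the $X^{s,a,b}$ norm is precisely the multiplicative interpolant $\jap{(\xi,\eta)}^{(1-\theta)/4}\omega(\xi,\eta)^{\theta}$ of the two endpoint weights. Applying Stein's multilinear complex interpolation theorem to the trilinear form $(u_1, u_2, u_3) \mapsto (\partial_x + \partial_y)(u_1 u_2 u_3)$ on the Fourier side, in the analytic parameter $\theta$, then yields the trilinear estimate at every intermediate $(s, a)$.

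I expect the main obstacle to be the clean extraction of the $(0, 1/4)$-endpoint trilinear estimate in the $X^{s,a,b}$ framework from the $U^2/V^2$-based analysis of Section \ref{sec:critical}. The weight $\omega(\xi,\eta)$ is homogeneous and degenerate along the axes $\xi = 0$ and $\eta = 0$, so the summation over the dyadic parameters $M, N \in 2^{\Z}$ present in the $X^{s,a,b}$ norm requires care: one must verify that the frequency-localized bilinear/Strichartz bounds driving the critical multilinear estimate remain square-summable once the $\omega_{M,N}^{4a}$ factors are inserted. Beyond this technical point, the remainder of the argument is essentially mechanical, since the heavy lifting has already been done in \cite{Kinoshita22} and in the proof of Theorem \ref{thm:wp_critical_v2}.
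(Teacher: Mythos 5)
Your overall skeleton (reduce to the trilinear estimate $\|(\partial_x+\partial_y)(u_1u_2u_3)\|_{X^{s,a,b'}}\lesssim\prod_j\|u_j\|_{X^{s,a,b}}$, use the $(1/4,0)$ estimate of \cite{Kinoshita22} and the critical $(0,1/4)$ analysis of Section \ref{sec:critical} as two endpoints, and interpolate along $s+a=1/4$) is the same as the paper's, but the step on which everything hinges — the extraction of a usable $(0,1/4)$ endpoint — is wrong as you describe it, and this is a genuine gap. First, no full trilinear Bourgain-space estimate with a ``harmless $T^{\epsilon}$ factor'' can hold at $(s,a)=(0,1/4)$: this is the scaling-critical regularity, so a power of $T$ cannot be gained there (if it could, one would get large-data critical LWP by Bourgain spaces alone, which is exactly what the $U^2/V^2$ machinery is there to avoid). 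Second, your embedding chain is used in the wrong direction: in the dual formulation the fourth factor must be measured in $X^{-s,-a,1/2-\epsilon}$, and the $V^2$-based quadrilinear bound of Section \ref{sec:critical} would have to be upgraded via $\|u_4\|_{V^2_{\text{ZK}}}\lesssim\|u_4\|_{X^{\cdot,1/2-\epsilon}}$, which is false (the embedding $V^2_{\text{ZK}}\hookrightarrow X^{\cdot,1/2-\epsilon}$ goes the other way). Concretely, what one can legitimately get from the critical analysis is a frequency- and modulation-localized bound with factors $(L_1L_2L_3L_4)^{1/2}$ and no $\epsilon$-gain (this is the paper's Corollary \ref{cor:crit_mod}, proved by rerunning the multilinear argument with the modulation-localized Strichartz/bilinear estimates \eqref{eq:L4_stri_L}, \eqref{eq:bilinear_L}, \eqref{eq:bilinear_BL} in place of the $U^2$-transferred ones), and with only $L_j^{1/2}$ the modulation sums in the duality argument diverge by $L_4^{\epsilon}$ — so the endpoint trilinear estimate you want to feed into Stein interpolation is not available.

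The repair is exactly where the paper departs from your plan: interpolate at the level of the dyadically localized, modulation-localized estimates rather than between two full-space trilinear bounds. Estimate \eqref{eq:key_est3} (critical, factors $(L_1L_2L_3L_4)^{1/2}$, weights $\omega_j$) and estimate (3.3) of \cite{Kinoshita22} (subcritical in modulation, factors $(L_1L_2L_3L_4)^{1/2-\epsilon_0}$, weights $R_j^{1/4}$) both bound the same localized integral, so taking a geometric mean with exponents $4a$ and $1-4a$ yields \eqref{eq:key_est_interp} with modulation factors $(L_1L_2L_3L_4)^{1/2-(1-4a)\epsilon_0}$; the gain is positive precisely because $a<1/4$, and only then can one sum over $L_j$, $R_j$, $M_j$, $N_j$ and recover the trilinear estimate with $b'=(b-1)^+$ needed for the contraction with a $T$-power gain. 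This also explains why the hypothesis $a<1/4$ is not a technical convenience but the exact boundary of the Bourgain-space method here. No Stein-type analytic-family argument is needed; the interpolation is just a pointwise geometric mean of two dyadic bounds.
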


\subsection{Strichartz estimates.} We recall some Strichartz estimates that will prove essential in Section \ref{sec:critical}. We start with the more direct $L^4$-Strichartz estimate, which is related to the optimal $L^4$ restriction theorem of Carbery, Kenig and Ziesler \cite{CKZ} (see also \cite[Corollary 3.4]{MolinetPilod_zk}).
\begin{lem}\label{lemma_L4Strichartz}
	Let $f \in L^2(\R^2)$. Then we have
	\begin{equation}\label{eq:L4_stri}
		\||D_x|^{\frac18} |D_y|^{\frac18} e^{-t (\partial_{x}^3 + \partial_y^3)} f\|_{ L_{t,x,y}^4} \lesssim \|f\|_{L_{x,y}^2}.
	\end{equation}
\end{lem}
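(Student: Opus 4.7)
My strategy is the standard Keel--Tao type $TT^*$ argument, starting from the pointwise dispersive estimate \eqref{eq:pointwise} already recorded in the introduction. First, I would interpolate \eqref{eq:pointwise} against the trivial $L^2$ isometry $\|e^{-t(\partial_x^3+\partial_y^3)}v_0\|_{L^2_{x,y}}=\|v_0\|_{L^2_{x,y}}$ via Riesz--Thorin. Taking the midpoint $\theta=1/2$ exchanges half a derivative for half a power of $t^{-1}$ and yields the mixed Lebesgue dispersive bound
\begin{equation*}
\||D_x|^{1/4}|D_y|^{1/4}e^{-t(\partial_x^3+\partial_y^3)}v_0\|_{L^4_{x,y}}\lesssim |t|^{-1/2}\|v_0\|_{L^{4/3}_{x,y}}, \qquad t\neq 0.
\end{equation*}

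Next, denoting $Tf(t):=|D_x|^{1/8}|D_y|^{1/8}e^{-t(\partial_x^3+\partial_y^3)}f$, the desired inequality reads $\|Tf\|_{L^4_{t,x,y}}\lesssim\|f\|_{L^2_{x,y}}$, which by duality is equivalent to the boundedness of $TT^*$ from $L^{4/3}_{t,x,y}$ to $L^4_{t,x,y}$. Using that $e^{-t(\partial_x^3+\partial_y^3)}$ is unitary on $L^2$ with adjoint $e^{t(\partial_x^3+\partial_y^3)}$, and that $|D_x|^{1/8}|D_y|^{1/8}$ is self-adjoint, a direct computation gives
\begin{equation*}
TT^*F(t)=\int_{\R}|D_x|^{1/4}|D_y|^{1/4}e^{-(t-s)(\partial_x^3+\partial_y^3)}F(s)\,ds,
\end{equation*}
so applying the interpolated dispersive bound pointwise in $t$ produces
\begin{equation*}
\|TT^*F(t)\|_{L^4_{x,y}}\lesssim\int_{\R}\frac{1}{|t-s|^{1/2}}\|F(s)\|_{L^{4/3}_{x,y}}\,ds.
\end{equation*}

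The proof would then close by one-dimensional Hardy--Littlewood--Sobolev in the time variable: the kernel $|t-s|^{-1/2}$ corresponds to fractional integration of order $1/2$ on $\R$, and the identity $\tfrac{1}{4/3}-\tfrac{1}{4}=\tfrac{1}{2}$ puts us exactly in the admissible range, giving $\|TT^*F\|_{L^4_{t,x,y}}\lesssim\|F\|_{L^{4/3}_{t,x,y}}$. The only genuine subtlety is bookkeeping: the operators $|D_x|^{1/2}|D_y|^{1/2}$ are positive-order Fourier multipliers, so one should start with Schwartz data (for which all Fourier-side manipulations are justified) and conclude by density; no resonance or cancellation analysis is needed at this level. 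Alternatively, one could obtain the same bound directly from the Carbery--Kenig--Ziesler $L^4$-restriction theorem on the cubic surface $\tau=\xi^3+\eta^3$, whose Gaussian curvature scales as $|\xi\eta|$ and thus produces precisely the weight $|\xi|^{1/8}|\eta|^{1/8}$.
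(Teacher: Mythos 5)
Your $TT^*$ skeleton (dual estimate, composition kernel, one-dimensional Hardy--Littlewood--Sobolev with $\tfrac{1}{4/3}-\tfrac14=\tfrac12$) is sound, and the exponents all match the scaling. The genuine gap is the very first step: you cannot apply Riesz--Thorin to ``interpolate \eqref{eq:pointwise} against the $L^2$ isometry'', because those two bounds concern \emph{different} operators --- the dispersive estimate is for $|D_x|^{1/2}|D_y|^{1/2}e^{-t(\partial_x^3+\partial_y^3)}$, while the $L^2$ conservation is for $e^{-t(\partial_x^3+\partial_y^3)}$ with no derivatives. Riesz--Thorin interpolates a single fixed operator between two Lebesgue-pair bounds; it does not let you trade half a derivative for half a power of $t^{-1}$. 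The standard repair is Stein's interpolation for the analytic family $T_z=|D_x|^{z/2}|D_y|^{z/2}e^{-t(\partial_x^3+\partial_y^3)}$, with the $L^2$ endpoint at $\operatorname{Re}z=0$ (purely imaginary powers are unitary) and the $L^1\to L^\infty$ endpoint at $\operatorname{Re}z=1$. But that endpoint is \emph{not} \eqref{eq:pointwise}: one must prove $\||D_x|^{(1+i\gamma)/2}|D_y|^{(1+i\gamma)/2}e^{-t(\partial_x^3+\partial_y^3)}f\|_{L^\infty}\lesssim C(\gamma)|t|^{-1}\|f\|_{L^1}$ with at most polynomial growth in $\gamma$, i.e.\ uniform bounds on oscillatory integrals of the form $\int|\xi|^{(1+i\gamma)/2}e^{i(t\xi^3+x\xi)}\,d\xi$. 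This is precisely the oscillatory-integral content of Kenig--Ponce--Vega, so your remark that ``no cancellation analysis is needed'' understates where the real work lies; as written, the intermediate bound $\||D_x|^{1/4}|D_y|^{1/4}e^{-t(\partial_x^3+\partial_y^3)}v_0\|_{L^4}\lesssim|t|^{-1/2}\|v_0\|_{L^{4/3}}$ is asserted but not proved.

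For comparison, the paper dispenses with all of this in one line: since $\det D^2_{\xi,\eta}(\xi^3+\eta^3)=36\xi\eta$, the estimate is a direct instance of the weighted $L^4$ Strichartz/restriction bounds of \cite{KPV_osc} (related to the Carbery--Kenig--Ziesler theorem), which is exactly the alternative you mention in your last sentence. So either quote that result, as the paper does, or carry out the analytic-interpolation endpoint honestly; with one of these in place, the rest of your argument goes through.
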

\begin{proof}
	This follows directly from \cite{KPV_osc}, since $\det D^2_{\xi,\eta} (\xi^3+\eta^3) = 36\xi\eta.$
\end{proof}
The Strichartz estimates also imply the following modulation-localized version \cite{GTV}.
\begin{lem}
    Given $f\in \mathcal{S}(\R\times \R^2)$,
    	\begin{equation}\label{eq:L4_stri_L}
		\||D_x|^{\frac18} |D_y|^{\frac18} Q_Lf\|_{L_t^4 L_{x,y}^4} \lesssim L^{\frac12}\|Q_Lf\|_{L_{t,x,y}^2}.
        \end{equation}
\end{lem}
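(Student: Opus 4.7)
\medskip

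The plan is to deduce \eqref{eq:L4_stri_L} from the fixed-time Strichartz estimate \eqref{eq:L4_stri} via a transference (or ``Fourier slicing'') argument, with a Cauchy--Schwarz loss of $L^{1/2}$ coming from the width of the modulation support.

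First, I would write $u := Q_L f$ explicitly in terms of its space-time Fourier transform $\widehat{u}(\tau,\xi,\eta)$, which is supported in the set $\{|\tau - \xi^3 - \eta^3| \lesssim L\}$. Introducing the change of variables $\tau = \sigma + \xi^3 + \eta^3$ and defining $g_\sigma \in L^2(\R^2)$ by
\begin{equation*}
\widehat{g_\sigma}(\xi,\eta) := \widehat{u}(\sigma + \xi^3 + \eta^3, \xi, \eta),
\end{equation*}
Fourier inversion yields the representation
\begin{equation*}
u(t,x,y) = \int_{|\sigma| \lesssim L} e^{it\sigma} \bigl( e^{-t(\partial_x^3+\partial_y^3)} g_\sigma \bigr)(x,y) \, d\sigma.
\end{equation*}
Note that, by Plancherel and another change of variables,
\begin{equation*}
\int_\R \|g_\sigma\|_{L^2_{x,y}}^2 \, d\sigma = \|\widehat{u}\|_{L^2_{\tau,\xi,\eta}}^2 = \|u\|_{L^2_{t,x,y}}^2.
\end{equation*}

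Next, I would apply $|D_x|^{1/8}|D_y|^{1/8}$ inside the integral and use Minkowski's inequality to move the $L^4_{t,x,y}$ norm inside:
\begin{equation*}
\bigl\| |D_x|^{\tfrac18}|D_y|^{\tfrac18} u \bigr\|_{L^4_{t,x,y}} \le \int_{|\sigma|\lesssim L} \bigl\| |D_x|^{\tfrac18}|D_y|^{\tfrac18} e^{-t(\partial_x^3+\partial_y^3)} g_\sigma \bigr\|_{L^4_{t,x,y}} \, d\sigma.
\end{equation*}
The $e^{it\sigma}$ factor is a unimodular constant in $(x,y)$ and disappears under the modulus, and the Littlewood--Paley-type cutoff in $\sigma$ does not affect the pointwise estimate. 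Invoking the fixed-time Strichartz estimate \eqref{eq:L4_stri} to each $g_\sigma$,
\begin{equation*}
\bigl\| |D_x|^{\tfrac18}|D_y|^{\tfrac18} e^{-t(\partial_x^3+\partial_y^3)} g_\sigma \bigr\|_{L^4_{t,x,y}} \lesssim \|g_\sigma\|_{L^2_{x,y}}.
\end{equation*}

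Finally, I would close the argument with Cauchy--Schwarz in $\sigma$ over the interval of length $\lesssim L$:
\begin{equation*}
\int_{|\sigma|\lesssim L} \|g_\sigma\|_{L^2_{x,y}} \, d\sigma \lesssim L^{1/2} \Bigl( \int_\R \|g_\sigma\|_{L^2_{x,y}}^2 \, d\sigma \Bigr)^{1/2} = L^{1/2} \|Q_L f\|_{L^2_{t,x,y}},
\end{equation*}
which yields \eqref{eq:L4_stri_L}. There is no genuine obstacle here: the only quantitative input is the linear Strichartz estimate \eqref{eq:L4_stri}, and the $L^{1/2}$ power is exactly what Cauchy--Schwarz produces from an interval of length $L$. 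The one mild subtlety to verify is that the set $\{|\tau - \xi^3 - \eta^3| \in [L/2, 2L]\}$ (respectively $\{|\tau-\xi^3-\eta^3|\le 2\}$ for $L = 1$) has $\sigma$-extent $\lesssim L$, so that the Cauchy--Schwarz step is justified; this is immediate from the definition of $\phi_L$ in \eqref{eq:psi}--\eqref{eq:psin}.
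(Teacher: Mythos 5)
Your proof is correct and coincides with the standard argument behind the estimate: the paper gives no written proof and simply cites \cite{GTV}, whose argument is exactly your foliation of $Q_Lf$ over the modulation variable $\sigma$, application of the linear $L^4$ Strichartz bound \eqref{eq:L4_stri} to each slice, and Cauchy--Schwarz over the interval $|\sigma|\lesssim L$ producing the $L^{1/2}$ factor. (Only a terminological quibble: \eqref{eq:L4_stri} is a space-time, not fixed-time, estimate, which is precisely what you in fact use.)
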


Now we move to some $L^2$ bilinear estimates for nonresonant interactions. These estimates were first derived for the unsymmetrized \eqref{mZK1} equation by Molinet and Pilod \cite{MolinetPilod_zk}. For the proof, see \cite[Section 2]{Kinoshita22}.
\begin{lem}\label{prop_L2_stric}
Fix $\alpha_1$, $\alpha_2 \in \R$ and $A, B_1, B_2\ge0$.
\begin{enumerate}
    \item For $f_1$, $f_2 \in \mathcal{S}(\R^2)$, suppose that
\begin{equation}\label{eq:nonstat}
    |\xi_1^2 - \xi_2^2| \gtrsim A, \quad |\eta_1-\alpha_1| \lesssim B_1, \quad |\eta_2 - \alpha_2| \lesssim B_2
\end{equation}
if $(\xi_j, \eta_j) \in \supp \F_{x,y} f_j$, $j=1,2$. Then 
\begin{equation}\label{eq:bilinear}
    \|e^{-(\partial_x^3+\partial_y^3)t}f_1 e^{-(\partial_x^3+\partial_y^3)t}f_2\|_{L_{t,x,y}^2} \lesssim A^{-\frac12} \min\{B_1^{\frac12},B_2^{\frac12}\} \|f_1\|_{L_{x,y}^2} \|f_2\|_{L_{x,y}^2}.
\end{equation}
\item For $f_1$, $f_2 \in \mathcal{S}(\R\times\R^2)$, if \eqref{eq:nonstat} holds for $(\xi_j, \eta_j) \in \supp \F_{x,y} f_j$, $j=1,2$, then
\begin{equation}\label{eq:bilinear_L}
    \|Q_{L_1}f_1 Q_{L_2}f_2\|_{L_{t,x,y}^2} \lesssim A^{-\frac12} \min\{B_1^{\frac12},B_2^{\frac12}\}L_1^\frac12 L_2^\frac12 \|f_1\|_{L_{t,x,y}^2} \|f_2\|_{L_{t,x,y}^2}.
\end{equation}
\end{enumerate}

\end{lem}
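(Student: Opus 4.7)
My plan for part (1) is a standard bilinear Fourier computation based on Plancherel and a change of variables that exploits the nondegenerate dependence of the dispersion relation on $\xi_1$. Writing $u_j=e^{-(\partial_x^3+\partial_y^3)t}f_j$ and taking the spacetime Fourier transform in $(t,x,y)$, one sees that $\mathcal{F}_{t,x,y}(u_1 u_2)$ is given, as a function of $(\tau,\xi,\eta)$, by
\begin{equation*}
\int \hat f_1(\xi_1,\eta_1)\hat f_2(\xi-\xi_1,\eta-\eta_1)\,\delta\bigl(\tau-\Phi(\xi_1,\eta_1;\xi,\eta)\bigr)\,d\xi_1\,d\eta_1,
\end{equation*}
where $\Phi(\xi_1,\eta_1;\xi,\eta)=\xi_1^3+(\xi-\xi_1)^3+\eta_1^3+(\eta-\eta_1)^3$. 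The hypothesis $|\xi_1^2-\xi_2^2|\gtrsim A$ is precisely the lower bound $|\partial_{\xi_1}\Phi|=3|\xi_1^2-\xi_2^2|\gtrsim A$, which says that, on each of the half-planes $\{\xi_1>\xi_2\}$ and $\{\xi_1<\xi_2\}$, the map $\xi_1\mapsto\Phi$ is a diffeomorphism, and I may change variables $(\xi_1,\eta_1)\mapsto(\tau,\eta_1)$ with Jacobian $|\partial_{\xi_1}\Phi|^{-1}$.

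After Plancherel, $\|u_1 u_2\|_{L^2_{t,x,y}}^2$ becomes the integral over $(\tau,\xi,\eta)$ of the squared $\eta_1$-integral above. I would apply Cauchy--Schwarz in $\eta_1$, letting one factor absorb the weight $|\partial_{\xi_1}\Phi|^{-1}$; the support hypothesis restricts $\eta_1$ to an interval of length at most $\min(B_1,B_2)$, so
\begin{equation*}
\int \frac{\mathbbm{1}_{|\eta_1-\alpha_1|\lesssim B_1,\,|\eta-\eta_1-\alpha_2|\lesssim B_2}}{|\partial_{\xi_1}\Phi|}\,d\eta_1\lesssim \frac{\min(B_1,B_2)}{A}.
\end{equation*}
Pulling this factor out and undoing the change of variables via $d\tau=|\partial_{\xi_1}\Phi|\,d\xi_1$ cancels the remaining Jacobian, and a final Plancherel in $(\xi,\eta)$ delivers $\|u_1 u_2\|_{L^2}^2\lesssim A^{-1}\min(B_1,B_2)\|f_1\|_{L^2}^2\|f_2\|_{L^2}^2$, which is \eqref{eq:bilinear}.

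For part (2), the plan is to freeze the modulation and reduce to part (1). Defining $\widehat{F_{j,\lambda}}(\xi,\eta):=\hat f_j(\lambda+\xi^3+\eta^3,\xi,\eta)$, one has the representation
\begin{equation*}
Q_{L_j}f_j(t,x,y)=\frac{1}{2\pi}\int e^{it\lambda_j}\phi_{L_j}(\lambda_j)\bigl(e^{-(\partial_x^3+\partial_y^3)t}F_{j,\lambda_j}\bigr)(x,y)\,d\lambda_j,
\end{equation*}
and $F_{j,\lambda_j}$ inherits the $(\xi,\eta)$-frequency support of $f_j$, so the hypothesis \eqref{eq:nonstat} is preserved. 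Using Minkowski in $(\lambda_1,\lambda_2)$, applying part (1) pointwise in $(\lambda_1,\lambda_2)$, and finally invoking Cauchy--Schwarz together with the Plancherel-type identity $\int\|F_{j,\lambda_j}\|_{L^2_{x,y}}^2\,d\lambda_j=\|f_j\|_{L^2_{t,x,y}}^2$ produces the extra factors $L_1^{1/2}L_2^{1/2}$ and yields \eqref{eq:bilinear_L}.

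The main subtlety will be a careful handling of the change of variables: the map $\xi_1\mapsto\Phi$ degenerates at $\xi_1=\xi_2$, so the domain must be split into the two monotonicity regions and each treated with its own inverse. Beyond this bookkeeping the argument is elementary, and in particular no orthogonality or Strichartz input enters.
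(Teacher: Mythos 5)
Your argument is correct: part (1) is the standard Plancherel/Cauchy--Schwarz computation on the resonance surface, with the lower bound $|\partial_{\xi_1}\Phi|=3|\xi_1^2-\xi_2^2|\gtrsim A$ supplying the Jacobian gain (after splitting into the two monotonicity branches, which you handle) and the $\eta$-support conditions supplying $\min(B_1,B_2)$, while your part (2) transference—foliating $Q_{L_j}f_j$ over modulations $\lambda_j$, applying part (1) for each frozen $(\lambda_1,\lambda_2)$, and using Cauchy--Schwarz with $\int\|F_{j,\lambda}\|_{L^2_{x,y}}^2\,d\lambda=\|f_j\|_{L^2_{t,x,y}}^2$—correctly produces the factor $L_1^{1/2}L_2^{1/2}$. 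The paper itself gives no proof here but defers to Section 2 of the cited work of the second author (following Molinet--Pilod), where essentially this same argument is used, so your route matches the intended one.
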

Using almost orthogonality, the bilinear Strichartz estimates give the next set of estimates.
\begin{lem}
Fix $A, B\ge0$.
\begin{enumerate}
    \item 	For $f_1$, $f_2 \in \mathcal{S}(\R^2)$,  assume that 
\begin{equation}\label{eq:nonstat2}
    |\xi_1^2 - \xi_2^2| \gtrsim A
\end{equation}
	if $(\xi_j, \eta_j) \in \supp \F_{x,y} f_j$. Then 
\begin{equation}\label{eq:bilinear_B}
    	\|P_{|\eta|< B}e^{-(\partial_x^3+\partial_y^3)t}f_1 e^{-(\partial_x^3+\partial_y^3)t}f_2\|_{L_{t,x,y}^2} \lesssim A^{-\frac12} B^{\frac12} \|f_1\|_{L_{x,y}^2} \|f_2\|_{L_{x,y}^2}.
\end{equation}
    \item For $f_1$, $f_2 \in \mathcal{S}(\R\times\R^2)$, if \eqref{eq:nonstat2} holds for $(\xi_j, \eta_j) \in \supp \F_{x,y} f_j$, $j=1,2$, then
\begin{equation}\label{eq:bilinear_BL}
        \|P_{|\eta|< B}Q_{L_1}f_1 Q_{L_2}f_2\|_{L_{t,x,y}^2} \lesssim A^{-\frac12} B^{\frac12} L_1^\frac12 L_2^\frac12\|Q_{L_1}f_1\|_{L_{t,x,y}^2} \|Q_{L_2}f_2\|_{L_{t,x,y}^2}.
\end{equation}
\end{enumerate}
\end{lem}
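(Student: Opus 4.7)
Since part (2) parallels part (1) exactly, with \eqref{eq:bilinear_L} replacing \eqref{eq:bilinear} (and picking up the extra factor $L_1^{1/2}L_2^{1/2}$), the plan is to focus on \eqref{eq:bilinear_B}. The strategy is to convert the one-sided cutoff $P_{|\eta|<B}$ on the first factor into the two-sided $\eta$-localization required by Lemma \ref{prop_L2_stric} by chopping $f_2$ into $\eta$-strips of width $B$ and summing via almost orthogonality. Crucially, the decomposition only touches the $\eta$-support, so the non-stationary condition \eqref{eq:nonstat2} on the pair $(\xi_1,\xi_2)$ is preserved automatically.

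Explicitly, I would partition $\R = \bigsqcup_{k\in\Z} I_k$ with $I_k = [kB, (k+1)B)$, set $\widehat{f_{2,k}} = \1_{I_k}(\eta)\widehat{f_2}$, and write $f_2 = \sum_k f_{2,k}$, so Plancherel in $\eta$ gives $\|f_2\|_{L^2}^2 = \sum_k \|f_{2,k}\|_{L^2}^2$. Setting
\[
g_k := P_{|\eta|<B} e^{-t(\partial_x^3+\partial_y^3)} f_1 \cdot e^{-t(\partial_x^3+\partial_y^3)} f_{2,k},
\]
the $\eta$-frequency support of $g_k$ is contained in the Minkowski sum $[-B,B] + I_k \subset [(k-1)B,(k+2)B]$. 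Hence $\widehat{g_k}$ and $\widehat{g_{k'}}$ have disjoint $\eta$-supports whenever $|k-k'|\ge 3$, and the standard Plancherel-based almost orthogonality
\[
\Bigl\|\sum_k g_k \Bigr\|_{L^2_{t,x,y}}^2 \lesssim \sum_k \|g_k\|_{L^2_{t,x,y}}^2
\]
follows immediately.

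Each pair $(P_{|\eta|<B}f_1, f_{2,k})$ now satisfies the hypotheses of Lemma \ref{prop_L2_stric} with $\alpha_1=0$, $\alpha_2=(k+\tfrac12)B$, $B_1=B_2=B$, so \eqref{eq:bilinear} yields
\[
\|g_k\|_{L^2_{t,x,y}} \lesssim A^{-1/2} B^{1/2} \|P_{|\eta|<B}f_1\|_{L^2} \|f_{2,k}\|_{L^2}.
\]
Squaring, summing in $k$, and using $\|P_{|\eta|<B}f_1\|_{L^2}\le \|f_1\|_{L^2}$ together with the Plancherel identity above recovers \eqref{eq:bilinear_B}. For part (2), the very same $\eta$-decomposition commutes with the modulation projections $Q_{L_j}$, so the identical argument with \eqref{eq:bilinear_L} delivers \eqref{eq:bilinear_BL}. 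I do not expect a genuine obstacle here; the lemma should be viewed as a convenience that trades one of the two $\eta$-localization hypotheses of Lemma \ref{prop_L2_stric} for a Littlewood-Paley projection, and the only verification required is the $O(1)$-overlap count for the output strips $[(k-1)B, (k+2)B]$, which is immediate.
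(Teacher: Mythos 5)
Your argument is internally consistent, but it proves a different estimate from the one the lemma asserts and the paper actually uses: you read $P_{|\eta|<B}$ as acting on the first factor only, i.e.\ you bound $\|(P_{|\eta|<B}e^{-t(\partial_x^3+\partial_y^3)}f_1)\,e^{-t(\partial_x^3+\partial_y^3)}f_2\|_{L^2_{t,x,y}}$. In the paper the projection acts on the \emph{product}. This is unambiguous both from the paper's own proof (its decomposition pairs the strip $|\eta_1+kB|<B$ of $f_1$ with the strip $|\eta_2-kB|<2B$ of $f_2$, a pairing that is only forced by the output constraint $|\eta_1+\eta_2|<B$) and from how \eqref{eq:bilin_stri_2} is invoked later, e.g.\ on $P_{|\xi|<\mathcal{M}}(v_1v_4)$ in Case 2C, where the parentheses are explicit. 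The point of the lemma is precisely that \emph{neither} factor carries any a priori $\eta$-localization; the localization is created by the cutoff on the output frequency.

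For the product version, your two key steps fail as written. First, the almost-orthogonality of the $g_k$ disappears: with $g_k=P_{|\eta|<B}\bigl(e^{-t(\partial_x^3+\partial_y^3)}f_1\, e^{-t(\partial_x^3+\partial_y^3)}f_{2,k}\bigr)$, every $g_k$ has $\eta$-support inside $\{|\eta|<B\}$, so there is no disjointness in $k$. Second, applying Lemma \ref{prop_L2_stric} with $\alpha_1=0$, $B_1=B$ presupposes that the first factor is $\eta$-localized in $[-B,B]$, which is no longer true. The repair is exactly the paper's argument: decompose $f_1$ into $\eta$-strips of width $\sim B$ centered at $-kB$; the output restriction $|\eta_1+\eta_2|<B$ then lets you insert for free the projection of $f_2$ onto $|\eta_2-kB|<2B$; apply the plain bilinear estimate \eqref{eq:bilinear} to each pair (the hypothesis \eqref{eq:nonstat2} is untouched, since all these projections are $\eta$-multipliers), and conclude by Cauchy--Schwarz in $k$ together with the finite overlap of \emph{both} families of strips. (The statement you did prove, with the projection on the first factor, is correct and your proof of it is fine, but it is essentially an immediate consequence of Lemma \ref{prop_L2_stric} and is not the estimate needed in Section \ref{sec:critical}.)
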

\begin{proof}
We prove only the first item. Since
\begin{align*}
	&\|P_{|\eta|< N}e^{-(\partial_x^3+\partial_y^3)t}f_1 e^{-(\partial_x^3+\partial_y^3)t}f_2\|_{L_{t,x,y}^2} \\\lesssim \ & \sum_{k\in \Z} 	\|(P_{|\eta_1+kN|<N}e^{-(\partial_x^3+\partial_y^3)t}f_1) (P_{|\eta_2-kN|<2N}e^{-(\partial_x^3+\partial_y^3)t}f_2)\|_{L^2_{t,x,y}},
\end{align*}
Proposition \ref{prop_L2_stric} gives
\begin{align*}
	&\|P_{|\eta|< N}e^{-(\partial_x^3+\partial_y^3)t}f_1 e^{-(\partial_x^3+\partial_y^3)t}f_2\|_{L_{t,x,y}^2} \\\lesssim\ &  \sum_{k\in \Z} 	\|(P_{|\eta_1+kN|<N}e^{-(\partial_x^3+\partial_y^3)t}f_1) (P_{|\eta_2-kN|<2N}e^{-(\partial_x^3+\partial_y^3)t}f_2)\|_{L^2_{t,x,y}}\\  \lesssim\ &  M^{-\frac12}N^{\frac12} \sum_{k\in \Z} \|P_{|\eta_1+kN|<N}f_1\|_{L^2_{x,y}}\|P_{|\eta_2-kN|<2N}f_2\|_{L^2_{x,y}}\\  \lesssim\ &  M^{-\frac12}N^{\frac12} \left(\sum_{k\in \Z} \|P_{|\eta_1+kN|<N}f_1\|_{L^2_{x,y}}^2\right)^{\frac12}\left(\sum_{k\in \Z} \|P_{|\eta_2-kN|<2N}f_2\|_{L^2_{x,y}}^2\right)^{\frac12}\\ \lesssim \ & M^{-\frac12} N^{\frac12} \|f_1\|_{L_{x,y}^2} \|f_2\|_{L_{x,y}^2}.
\end{align*}
\end{proof}
Applying the transference principle (see \cite[Proposition 2.16]{HHK09}), we now convert estimates \eqref{eq:bilinear} and \eqref{eq:bilinear_B} into $U^p$ bounds.
\begin{lem}\label{lem:bilin_stri}
For $u\in U^4_{\text{ZK}}$,
\begin{equation}
	\label{eq:L4_stri_U4}
	\||D_x|^{\frac18}|D_y|^{\frac18}u\|_{L^4_{t,x,y}}\lesssim \|u\|_{U^4_{\text{ZK}}}.
\end{equation}
Furthermore, given $u_1$, $u_2 \in U^2_{\text{ZK}}$,
\begin{enumerate}
	\item 
	assume that there exist $\alpha_1$, $\alpha_2 \in \R$ such that
	\[
	|\xi_1^2 - \xi_2^2| \gtrsim A, \quad |\eta_1-\alpha_1| \lesssim B_1, \quad |\eta_2 - \alpha_2| \lesssim B_2
	\]
	if $(\xi_j, \eta_j) \in \supp \F_{x,y}u_j$. Then
	\begin{equation}\label{eq:bilin_stri}
		\|u_1 u_2\|_{L_{t,x,y}^2} \lesssim A^{-\frac12} \min\{B_1^{\frac12},B_2^{\frac12}\} \|u_1\|_{U^2_{\text{ZK}}} \|u_2\|_{U^2_{\text{ZK}}}.
	\end{equation}
\item For $u_1$, $u_2 \in U^2_{\text{ZK}}$, assume that 
\begin{equation}\label{eq:transverse}
	|\xi_1^2 - \xi_2^2| \gtrsim A
\end{equation}
if $(\xi_j, \eta_j) \in \supp \F_{x,y} u_j$. Then 
\begin{equation}\label{eq:bilin_stri_2}
	\|P_{|\eta|< B}u_1u_2\|_{L_{t,x,y}^2} \lesssim A^{-\frac12} B^{\frac12} \|u_1\|_{U^2_{\text{ZK}}} \|u_2\|_{U^2_{\text{ZK}}}.
\end{equation}
\end{enumerate}
\end{lem}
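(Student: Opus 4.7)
The plan is to deduce all three estimates from the transference principle (Proposition 2.16 of \cite{HHK09}) applied to the linear and bilinear Strichartz bounds already established in Lemma \ref{lemma_L4Strichartz} and Proposition \ref{prop_L2_stric}. Recall that this principle upgrades any linear bound $\|e^{-t(\partial_x^3+\partial_y^3)}f\|_{L^q_tL^r_{x,y}}\lesssim\|f\|_{L^2}$ into $\|u\|_{L^q_tL^r_{x,y}}\lesssim\|u\|_{U^q_{\text{ZK}}}$, and any bilinear $L^2_{t,x,y}$-bound on two free evolutions into a $U^2_{\text{ZK}}\times U^2_{\text{ZK}}\to L^2_{t,x,y}$ bound. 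The Fourier-support conditions on $u_j$ transfer to the atomic profiles, since the frequency projections appearing in the hypotheses commute with the semigroup $e^{-t(\partial_x^3+\partial_y^3)}$ and with restriction in time.

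For \eqref{eq:L4_stri_U4}, I would fix a $U^4_{\text{ZK}}$-atom $a=\sum_k\chi_{[t_{k-1},t_k)}e^{-t(\partial_x^3+\partial_y^3)}\phi_{k-1}$ normalized by $\sum_k\|\phi_k\|_{L^2}^4=1$, and observe that $|D_x|^{1/8}|D_y|^{1/8}$ commutes with the linear flow. The key compatibility is that the exponent $q=4$ of the Strichartz estimate \eqref{eq:L4_stri} matches the summability exponent of the atom, so
\begin{equation*}
\||D_x|^{1/8}|D_y|^{1/8}a\|_{L^4_{t,x,y}}^4=\sum_k\int_{t_{k-1}}^{t_k}\||D_x|^{1/8}|D_y|^{1/8}e^{-t(\partial_x^3+\partial_y^3)}\phi_{k-1}\|_{L^4_{x,y}}^4\,dt\lesssim\sum_k\|\phi_k\|_{L^2}^4=1,
\end{equation*}
after extending each inner time integral to $\R$ and applying Lemma \ref{lemma_L4Strichartz}. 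Triangle inequality over the atomic decomposition of $u$ followed by the infimum definition of $\|u\|_{U^4_{\text{ZK}}}$ then yields \eqref{eq:L4_stri_U4}.

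For \eqref{eq:bilin_stri} and \eqref{eq:bilin_stri_2}, I would decompose each $u_j$ into $U^2_{\text{ZK}}$-atoms and, by bilinearity, reduce the problem to controlling the product of two atoms. On the common refinement of the two time partitions, each contribution equals $e^{-t(\partial_x^3+\partial_y^3)}\phi^{(1)}_{k_1}\cdot e^{-t(\partial_x^3+\partial_y^3)}\phi^{(2)}_{k_2}$, to which Proposition \ref{prop_L2_stric} (or the $P_{|\eta|<B}$-variant \eqref{eq:bilinear_B} for part (2)) applies directly. Squaring the bound, summing over the pairs $(k_1,k_2)$ whose time-intervals overlap, and using that the sum is dominated by the full product $\bigl(\sum_{k_1}\|\phi^{(1)}_{k_1}\|_{L^2}^2\bigr)\bigl(\sum_{k_2}\|\phi^{(2)}_{k_2}\|_{L^2}^2\bigr)=1$, yields the atom-level bound $\|a^{(1)}a^{(2)}\|_{L^2_{t,x,y}}\lesssim A^{-1/2}\min\{B_1^{1/2},B_2^{1/2}\}$. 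The $\ell^1$ sum of atomic coefficients then reproduces the $\|u_1\|_{U^2_{\text{ZK}}}\|u_2\|_{U^2_{\text{ZK}}}$ factor.

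No substantial obstacle is anticipated, since the entire argument is a direct application of the transference principle. The only delicate point is matching atom exponents with target Lebesgue exponents: this forces the use of $U^4_{\text{ZK}}$ for the $L^4$-Strichartz bound, whereas the two $L^2$-bilinear estimates naturally transfer to $U^2_{\text{ZK}}\times U^2_{\text{ZK}}$.
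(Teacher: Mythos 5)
Your proposal is correct and follows essentially the same route as the paper, which simply invokes the transference principle of \cite{HHK09} to convert \eqref{eq:L4_stri}, \eqref{eq:bilinear} and \eqref{eq:bilinear_B} into $U^p_{\text{ZK}}$ bounds; your atom-by-atom argument (including the common refinement of partitions and the projection of atomic profiles to preserve the Fourier-support hypotheses) is just the standard proof of that principle written out in detail.
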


\begin{rem}
    By symmetry, the estimates of Lemmas \ref{prop_L2_stric}-\ref{lem:bilin_stri} hold when the roles of $\xi$, $\eta$ are exchanged.
\end{rem}

\medskip

Finally, we recall a standard high-modulation estimate, which can also be found in \cite[Corollary 2.15]{HHK09}.

\begin{lem}\cite[Lemma 4.35]{KTV_book}
Given $\Lambda>0$, suppose that
\[
|\tau-\xi^3-\eta^3|> \Lambda
\]
for $(\tau,\xi, \eta)\in \supp \mathcal{F}_{t,x,y} u$. Then
\begin{equation}\label{eq:highmod}
	\|u\|_{L^2_{t,x,y}}\lesssim \Lambda^{-\frac12}\|u\|_{V^2_{\text{ZK}}}.
\end{equation}
\end{lem}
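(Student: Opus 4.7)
The plan is to reduce the inequality to a pure time-Fourier statement and then carry it out by duality combined with integration by parts. Set $L=\partial_x^3+\partial_y^3$ and put $w(t,\cdot)=e^{tL}u(t,\cdot)$, so that by definition $\|u\|_{V^2_{\text{ZK}}}=\|w\|_{V^2}$. A direct symbol computation shows $\ha w(\tau,\xi,\eta)=\ha u(\tau+\xi^3+\eta^3,\xi,\eta)$, and the change of variables $\sigma=\tau-\xi^3-\eta^3$ in the Plancherel integral shows that, under the support hypothesis on $\ha u$, $\ha w$ is supported in $\{|\tau|>\Lambda\}$ and
\[
\|u\|_{L^2_{t,x,y}}=\|w\|_{L^2_{t,x,y}}=\|\mathbbm{1}_{|D_t|>\Lambda}w\|_{L^2_{t,x,y}}.
\]
It is therefore enough to prove the abstract bound $\|\mathbbm{1}_{|D_t|>\Lambda}\ti w\|_{L^2_{t,x,y}} \lesssim \Lambda^{-1/2}\|\ti w\|_{V^2}$ for arbitrary $\ti w\in V^2$ valued in $L^2(\R^2)$, and then apply it with $\ti w=w$.

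By density I may assume $\ti w$ is a step function $\ti w=\sum_{k=1}^K \chi_{[t_{k-1},t_k)}\phi_{k-1}$ (with $\phi_0=0$), for which $\|\ti w\|_{V^2}^2\geq \sum_j\|\phi_j-\phi_{j-1}\|_{L^2}^2$ (take the partition $\{t_1,\ldots,t_{K-1}\}$ in the $V^2$ definition). By duality, it suffices to bound $|\langle \ti w,f\rangle|$ for any $f\in L^2_{t,x,y}$ with $\|f\|_{L^2}=1$ and $\ha f$ supported in $\{|\tau|>\Lambda\}$. For such $f$, I define $g$ by $\ha g(\tau,\xi,\eta)=-\ha f(\tau,\xi,\eta)/(i\tau)$; then $\partial_tg=-f$, $\|\partial_tg\|_{L^2}=1$, and the support condition yields $\|g\|_{L^2}\leq\Lambda^{-1}$.

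Since $\partial_t\ti w=\sum_j(\phi_j-\phi_{j-1})\delta_{t_j}$ distributionally, integration by parts in $t$ gives
\[
\langle \ti w,f\rangle=-\langle \ti w,\partial_tg\rangle=\langle\partial_t\ti w,g\rangle=\sum_j\langle\phi_j-\phi_{j-1},g(t_j)\rangle_{L^2_{x,y}},
\]
and Cauchy--Schwarz reduces the bound to the trace-type estimate $\sum_j\|g(t_j)\|_{L^2_{x,y}}^2\lesssim \Lambda^{-1}$. I would obtain this from the one-dimensional Sobolev embedding $H^1(I)\hookrightarrow L^\infty(I)$ on intervals $I_j\ni t_j$ of length $\sim \Lambda^{-1}$, which yields $\|g(t_j)\|^2_{L^2_{x,y}}\lesssim \Lambda\|g\|^2_{L^2(I_j;L^2_{x,y})}+\Lambda^{-1}\|\partial_tg\|^2_{L^2(I_j;L^2_{x,y})}$, summed over $j$ using a bounded-overlap covering together with $\|g\|_{L^2}\leq\Lambda^{-1}$ and $\|\partial_tg\|_{L^2}=1$.

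The main technical obstacle is precisely this trace control: when the jump times $t_j$ cluster at scales finer than $\Lambda^{-1}$, pairwise disjoint intervals $I_j$ are unavailable, but a bounded-overlap covering preserves the inequality up to an absolute constant. A cleaner alternative is to prove the estimate first for a single-interval atom $\ti w=\chi_{[s_0,s_1]}\phi$, where Plancherel and the pointwise bound $|e^{-is_0\tau}-e^{-is_1\tau}|\leq 2$ give the result immediately, and then lift to $V^2$ via the $U^p$--$V^p$ interpolation of Proposition 2.20.
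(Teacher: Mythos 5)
Your reduction to the pure time-Fourier statement (conjugating by the flow, so that the claim becomes $\|\mathbbm{1}_{|D_t|>\Lambda}\ti w\|_{L^2_{t,x,y}}\lesssim \Lambda^{-1/2}\|\ti w\|_{V^2}$) is correct and is exactly how the cited proof begins, but the core of your argument has a genuine gap: the trace-type estimate $\sum_j\|g(t_j)\|_{L^2_{x,y}}^2\lesssim \Lambda^{-1}$ is simply false for arbitrary jump times, and no bounded-overlap covering can rescue it. Concretely, take $g(t)=\Lambda^{-1/2}e^{2i\Lambda t}h(\Lambda t)\,\phi_0$ with $h$ a Schwartz bump whose Fourier transform is supported in $[-1/2,1/2]$ and $\|\phi_0\|_{L^2_{x,y}}=1$; then $\widehat g$ is supported in $\{|\tau|>\Lambda\}$, $\|g\|_{L^2}\sim\Lambda^{-1}$, $\|\partial_t g\|_{L^2}\sim 1$, yet $\|g(t)\|_{L^2_{x,y}}\sim\Lambda^{-1/2}$ on a whole interval of length $\sim\Lambda^{-1}$, so placing $K$ jump times $t_j$ in that interval gives $\sum_j\|g(t_j)\|^2\sim K\Lambda^{-1}$. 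When the $t_j$ cluster like this, any intervals $I_j\ni t_j$ of length $\sim\Lambda^{-1}$ overlap with multiplicity $\sim K$, so the constant in your covering argument degenerates; and since for such clustered configurations $\sum_j\|\phi_j-\phi_{j-1}\|^2$ can be comparable to $\|\ti w\|_{V^2}^2$ (alternating jumps), the loss after Cauchy--Schwarz is a genuine $\sqrt K$, not an artifact. Two further points: step functions are dense in $U^2$ but not in $V^2$ (their $V^2$-closure is a proper subspace), so the initial "by density" reduction is also unjustified; and the "cleaner alternative" does not close either, since the single-interval bound does not yield the bound for a general $U^2$ atom (the triangle inequality over $K$ steps loses $K^{1/2}$, so some orthogonality input is needed), and Proposition 2.20 requires two $U^p\to E$ bounds with different exponents $p<q$, not a bound on single-interval atoms.

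For comparison, the standard proof behind the citation runs differently after your (correct) conjugation step: one first shows the modulus-of-continuity bound $\|\ti w(\cdot+s)-\ti w\|_{L^2_t L^2_{x,y}}\le |s|^{1/2}\|\ti w\|_{V^2}$ by splitting $\R$ into the shifted grid $\{[ks,(k+1)s)\}_k$ and using the definition of the $2$-variation on that partition; this says $V^2\hookrightarrow \dot B^{1/2}_{2,\infty}$ in time. Then one decomposes $\mathbbm{1}_{|D_t|>\Lambda}$ into dyadic modulation annuli $|\tau|\sim\lambda$, notes that on each annulus the multiplier is dominated by the symbol of $\ti w-\ti w(\cdot+c\lambda^{-1})$ (up to a bounded factor), so $\|Q_\lambda \ti w\|_{L^2}\lesssim \lambda^{-1/2}\|\ti w\|_{V^2}$, and finally sums using the orthogonality of the annuli: $\|\mathbbm{1}_{|D_t|>\Lambda}\ti w\|_{L^2}^2=\sum_{\lambda>\Lambda}\|Q_\lambda\ti w\|_{L^2}^2\lesssim\Lambda^{-1}\|\ti w\|_{V^2}^2$. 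If you prefer your duality route, it can be repaired, but only by replacing the pointwise trace step with the duality of Proposition~\ref{prop-duality} (bounding $|\int\langle\partial_t\ti w,g\rangle\,dt|\lesssim\|\ti w\|_{V^2}\|g\|_{U^2}$) together with the embedding $\dot B^{1/2}_{2,1}\hookrightarrow U^2$ and $\|g\|_{\dot B^{1/2}_{2,1}}\lesssim\Lambda^{-1/2}\|\partial_t g\|_{L^2}$ for $g$ with frequency support in $\{|\tau|>\Lambda\}$; as written, however, the proof does not go through.
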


\section{Proof of the Theorem \ref{thm:wp_critical}}\label{sec:critical}
Given dyadic numbers $N_j,M_j$, $j=1,\dots,4$, write
\begin{equation}
	\omega_j=N_j^{-\frac14}M_j^{\frac14}+N_j^{\frac14}M_j^{-\frac14}\ge 1
\end{equation}
and
\begin{equation}
	P_j u:= P_{M_j,N_j}u.
\end{equation}
%In particular,
%\begin{equation}
%	\|u\|_Y=\left\| \omega_j\|P_ju\|_{U^2_{\text{ZK}}} \right\|_{\ell^2_{N_j,M_j}}.
%\end{equation}

Define $\Gamma$ as the set of dyadic blocks satisfying
$$
 \prod_{j=1}^4 \psi_{M_j}(\xi_j)\psi_{N_j}(\eta_j) \not\equiv 0\quad\text{ over }\quad \sum_j\xi_j=\sum_j\eta_j=0,
$$
In particular, outside of $\Gamma$,
$$
\int P_1u_1P_2u_2P_3u_3\cdot (\partial_x+\partial_y)P_4u_4 dxdydt = 0. 
$$
Throughout this section, we use the notation
	$$
	\xi_{\max} ,\ \xi_{2\text{nd}}, \ \xi_{3\text{rd}}, \ \xi_{\min}
	$$
	to represent largest, second largest, third largest and smallest (in absolute value) of $\{\xi_1,\xi_2,\xi_3,\xi_4\}$ (and similarly for $\eta, M$ and $N$).

\begin{lem}\label{lem:reduct}
	Suppose that there exists $K:(2^\Z)^8\to \R^+$,  $$K=K(N_1,\dots, N_4, M_1,\dots, M_4),$$ such that
	\begin{equation}\label{eq:kernel}
\sup_{N_{\max},M_{\max}} \left(\sum_{\substack{{N_{2nd},N_{\trd},N_{\min}}\\{M_{2nd},M_{\trd},M_{\min}}}} \mathbbm{1}_{\Gamma} K^2\right)^{\frac 12} < \infty
	\end{equation}
	and
	\begin{equation}\label{eq:key_est}
		\omega_4\left|\int P_1u_1P_2u_2 P_3u_3 \cdot  (\partial_x+\partial_y)P_4u_4 dxdydt\right| \lesssim \left(\prod_{j=1}^3 \omega_j \|P_ju_j\|_{V^2_{\text{ZK}}}\right)\|P_4u_4\|_{V^2_{\text{ZK}}} \cdot K.
	\end{equation}
    	Then
	\begin{equation}\label{eq:multi}
		\left\|\mathbbm{1}_{[0,\infty)}\int_0^t e^{-(t-t')(\partial_x^3+\partial_y^3)}(\partial_x+\partial_y)u_1u_2u_3(t')dt'\right\|_{Y} \lesssim \prod_{j=1}^3\|u_j\|_Y.
	\end{equation}
	% Then, for any $T\in[0,\infty]$,
	% \begin{equation}\label{eq:multi}
	% 	\left\|\mathbbm{1}_{[0,T)}\int_0^t e^{-(t-t')(\partial_x^3+\partial_y^3)}(\partial_x+\partial_y)u_1u_2u_3(t')dt'\right\|_{Y} \lesssim \prod_{j=1}^3\|u_j\|_Y.
	% \end{equation}
% Moreover,
% \begin{equation}\label{eq:lim1}
% \left\|\mathbbm{1}_{(T,\infty)} \int_0^t e^{-(t-t')(\partial_x^3+\partial_y^3)}(\partial_x+\partial_y)u_1u_2u_3(t')dt' \right\|_{Y} \to 0\quad \mbox{as }T\to \infty
% \end{equation}
% and the limit
% \begin{equation}\label{eq:lim2}
% \lim_{t\to\infty} \int_0^t e^{t'(\partial_x^3+\partial_y^3)}(\partial_x+\partial_y)u_1u_2u_3(t')dt' 
% \end{equation}
% exists in $X$.
\end{lem}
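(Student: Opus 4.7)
The plan is to dualize through the $U^2_{\text{ZK}}$--$V^2_{\text{ZK}}$ pairing (Proposition~\ref{prop-duality}), decompose both sides in Littlewood--Paley pieces so that the hypothesized estimate \eqref{eq:key_est} applies dyadically, and then pass to the full $Y$-norm estimate via a Schur-type argument using \eqref{eq:kernel}.

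First, for each fixed $(M_4,N_4)$, the map $t\mapsto\mathbbm{1}_{[0,\infty)}(t)\,e^{t(\partial_x^3+\partial_y^3)}P_{M_4,N_4}w(t)$ belongs to $V^1_{-}$, is absolutely continuous on compact intervals, and has distributional derivative equal to $\mathbbm{1}_{t\ge 0}\,e^{t(\partial_x^3+\partial_y^3)}P_{M_4,N_4}(\partial_x+\partial_y)(u_1u_2u_3)(t)$. Proposition~\ref{prop-duality} then gives
\[
\|P_{M_4,N_4}w\|_{U^2_{\text{ZK}}}=\sup_{\|v\|_{V^2_{\text{ZK}}}=1}\Big|\int_0^\infty\!\!\int_{\R^2}P_{M_4,N_4}(\partial_x+\partial_y)(u_1u_2u_3)\,v\,dxdydt\Big|.
\]
Decomposing $u_j=\sum_{M_j,N_j}P_{M_j,N_j}u_j$ for $j=1,2,3$, the integrand is supported only on quadruples in $\Gamma$; moving $P_{M_4,N_4}$ to $v$ by self-adjointness and applying \eqref{eq:key_est} dyadically (using that Littlewood--Paley projections are bounded on $V^2_{\text{ZK}}$, so $\|P_{M_4,N_4}v\|_{V^2_{\text{ZK}}}\lesssim 1$) yields
\[
\omega_4\,\|P_{M_4,N_4}w\|_{U^2_{\text{ZK}}}\lesssim\sum_{\substack{(M_j,N_j)_{j=1,2,3}\\(M_1,\ldots,N_4)\in\Gamma}}K\prod_{j=1}^3\omega_j\|P_{M_j,N_j}u_j\|_{V^2_{\text{ZK}}}.
\]

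Writing $B_j:=\omega_j\|P_{M_j,N_j}u_j\|_{V^2_{\text{ZK}}}$, the remaining task is to prove
\[
\sum_{M_4,N_4}\Big(\sum_{\Gamma,\,(M_4,N_4)\text{ fixed}}K\,B_1B_2B_3\Big)^2\lesssim\prod_{j=1}^3\|B_j\|_{\ell^2_{M_j,N_j}}^2,
\]
which by $\ell^2$-duality is equivalent to bounding the symmetric $4$-linear sum $\sum_\Gamma K\,B_1B_2B_3\,D_{M_4,N_4}$ by $\prod_{j=1}^3\|B_j\|\cdot\|D\|_{\ell^2}$ for every unit-norm sequence $D$. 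I would partition $\Gamma=\bigsqcup_{(j^*,k^*)\in\{1,2,3,4\}^2}\Gamma^{(j^*,k^*)}$ according to which indices realize $(M_{\max},N_{\max})$ (breaking ties lexicographically), fix the outer sum over $(M_{j^*},N_{k^*})$ in each piece, and apply Cauchy--Schwarz on the remaining six dyadic indices: \eqref{eq:kernel} then controls the resulting $K^2$-factor uniformly, while the constraints $M_i\le M_{j^*}$ and $N_i\le N_{k^*}$ implicit in $\Gamma^{(j^*,k^*)}$ let one close the remaining $B$- and $D$-factors via a second Cauchy--Schwarz on the outer variables. Combined with the embedding $U^2_{\text{ZK}}\hookrightarrow V^2_{\text{ZK}}$, which gives $\|B_j\|_{\ell^2}\lesssim\|u_j\|_Y$, this proves \eqref{eq:multi}.

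The hardest step is the Schur-type summation: \eqref{eq:kernel} controls $\sum K^2$ only after the maximum frequencies are specified, whereas the target estimate sums $(M_4,N_4)$ freely, and a naive Cauchy--Schwarz diverges. The geometric constraint imposed by $\Gamma$ (namely $\xi_1+\xi_2+\xi_3+\xi_4=0$ and the analogous condition on $\eta$) is what reconciles the two, and carefully implementing the case split according to which index realizes the maximum is the delicate technical part.
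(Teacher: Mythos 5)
Your skeleton — dualizing each block via Proposition \ref{prop-duality}, decomposing $u_1,u_2,u_3$ dyadically, applying \eqref{eq:key_est} blockwise with $\|P_{M_4,N_4}v\|_{V^2_{\text{ZK}}}\lesssim 1$, using $U^2_{\text{ZK}}\hookrightarrow V^2_{\text{ZK}}$, and then reducing \eqref{eq:multi} to an $\ell^2$ summation governed by \eqref{eq:kernel} — is exactly the paper's route. The gap is in your execution of the summation step, which you yourself flag as the crux. In the piece of $\Gamma$ where a single index, say $j^*=k^*=1$, realizes both $M_{\max}$ and $N_{\max}$, your prescription (inner Cauchy--Schwarz over \emph{all six} remaining dyadic indices, then a second Cauchy--Schwarz in the outer variables $(M_1,N_1)$) produces the majorant
\begin{equation*}
\Big(\sup_{M_{\max},N_{\max}}\sum \mathbbm{1}_\Gamma K^2\Big)^{1/2}\,\|B_2\|_{\ell^2}\|B_3\|_{\ell^2}\|D\|_{\ell^2}\sum_{M_1,N_1}B_1(M_1,N_1),
\end{equation*}
i.e.\ an $\ell^1$-type sum of $B_1$ over the maximal frequencies. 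The constraints $M_i\le M_{j^*}$, $N_i\le N_{k^*}$ only truncate the inner $\ell^2$ sums and give no decay in $(M_1,N_1)$, and \eqref{eq:kernel} is merely uniform in $(M_{\max},N_{\max})$, so there is nothing left depending on the outer variables against which a second Cauchy--Schwarz could be taken: the bound fails for any $B_1\in\ell^2\setminus\ell^1$. The mixed case $j^*\ne k^*$ fails in the same way, leaving $\ell^1_{M_{j^*}}\ell^2$- and $\ell^1_{N_{k^*}}\ell^2$-type sums.

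What actually closes the argument (and is what the paper's terse ``duality and Cauchy--Schwarz'' relies on) is the consequence of the convolution relations \eqref{eq:convol} built into $\Gamma$: one always has $M_{\max}\sim M_{2\text{nd}}$ and $N_{\max}\sim N_{2\text{nd}}$. You mention the constraint $\sum\xi_j=\sum\eta_j=0$ only in passing; it must be used quantitatively, by keeping the indices carrying $M_{2\text{nd}}$ and $N_{2\text{nd}}$ \emph{out} of the inner Cauchy--Schwarz (legitimate, since $K\ge0$ means \eqref{eq:kernel} also dominates the smaller sums in which these variables are frozen). Then each outer variable $M_{\max}$, resp.\ $N_{\max}$, appears in two distinct block-norm factors (the two functions with comparable $\xi$-, resp.\ $\eta$-, frequencies), and Cauchy--Schwarz in the outer sum produces genuine $\ell^2$ norms; for instance, in the case $M_1=M_{\max}\sim M_2$, $N_1=N_{\max}\sim N_3$ one ends with $\sum_{M_1,N_1}B_1(M_1,N_1)\beta(M_1)\gamma(N_1)\le\|B_1\|_{\ell^2}\|\beta\|_{\ell^2}\|\gamma\|_{\ell^2}$, where $\beta(M_1)=(\sum_{N_2}B_2(M_2,N_2)^2)^{1/2}$ with $M_2\sim M_1$ and $\gamma(N_1)=(\sum_{M_3}B_3(M_3,N_3)^2)^{1/2}$ with $N_3\sim N_1$. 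Without this reorganization the case split you propose does not prove \eqref{eq:multi}.
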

\begin{proof}
By Proposition \ref{prop-duality},
\begin{align*}
	&\omega_4\left\|P_4\mathbbm{1}_{[0,\infty)}\int_0^t e^{-(t-t')(\partial_x^3+\partial_y^3)}(\partial_x+\partial_y)u_1u_2u_3(t')dt'\right\|_{U^2_{\text{ZK}}}\\\lesssim\ & 
	\sum_{\substack{N_j, M_j\\ j=1,2,3}}\omega_4\left\|P_4\mathbbm{1}_{[0,\infty)}\int_0^t e^{-(t-t')(\partial_x^3+\partial_y^3)}(\partial_x+\partial_y)P_1u_1P_2u_2P_3u_3(t')dt'\right\|_{U^2_{\text{ZK}}} \\\lesssim\ & 	\sum_{\substack{N_j, M_j\\ j=1,2,3}} \sup_{\|u_4\|_{V^2_{\text{ZK}}}=1} \omega_4\left|\int P_1u_1P_2u_2 P_3u_3 \cdot  (\partial_x+\partial_y)P_4u_4 dxdydt\right|
	\\\lesssim\ & 	\sum_{\substack{N_j, M_j\\ j=1,2,3}} \mathbbm{1}_{\Gamma}\sup_{\|u_4\|_{V^2_{\text{ZK}}}=1} \left(\prod_{j=1}^3 \omega_j \|P_ju_j\|_{V^2_{\text{ZK}}}\right)\|P_4u_4\|_{V^2_{\text{ZK}}}\cdot K
	\\\lesssim\ & 	\sum_{\substack{N_j, M_j\\ j=1,2,3}} \mathbbm{1}_{\Gamma}\left(\prod_{j=1}^3 \omega_j \|P_ju_j\|_{U^2_{\text{ZK}}}\right)\cdot K	
\end{align*}
In particular,
\begin{align*}
	&\left\|\mathbbm{1}_{[0,\infty)}\int_0^t e^{-(t-t')(\partial_x^3+\partial_y^3)}(\partial_x+\partial_y)u_1u_2u_3(t')dt'\right\|_{Y}^2\\\lesssim\ & \sum_{N_4,M_4}  \omega_4^2\left\|P_4\mathbbm{1}_{[0,\infty)}\int_0^t e^{-(t-t')(\partial_x^3+\partial_y^3)}(\partial_x+\partial_y)u_1u_2u_3(t')dt'\right\|_{U^2_{\text{ZK}}}^2 \\\lesssim\ &  \left\|\sum_{\substack{N_j, M_j\\ j=1,2,3}} \mathbbm{1}_{\Gamma}\left(\prod_{j=1}^3 \omega_j \|P_ju_j\|_{U^2_{\text{ZK}}}\right)\cdot K\right\|_{\ell^2_{M_4,N_4}}^2.
\end{align*}
By duality and Cauchy-Schwarz,
\begin{align*}
&\left\|\sum_{\substack{N_j, M_j\\ j=1,2,3}} \mathbbm{1}_{\Gamma}\left(\prod_{j=1}^3 \omega_j \|P_ju_j\|_{U^2_{\text{ZK}}}\right)\cdot K\right\|_{\ell^2_{M_4,N_4}}\\ \lesssim \ & \sup_{N_{\max},M_{\max}} \left(\sum_{\substack{{N_{2nd},N_{\trd},N_{\min}}\\{M_{2nd},M_{\trd},M_{\min}}}} \mathbbm{1}_{\Gamma} K^2\right)^{\frac 12}  \prod_{j=1}^3 \|u_j\|_{Y}\lesssim  \prod_{j=1}^3 \|u_j\|_{Y},
\end{align*}
and \eqref{eq:multi} follows. 
% The proof of \eqref{eq:multi} (for $T=\infty$), \eqref{eq:lim1} and \eqref{eq:lim2} is now a consequence of the arguments present in \cite{HHK09-2}.

% \Simc{I'm not happy with this explanation. Can we do it properly for once, without just sending to the errata?}
\end{proof}

\begin{rem}
	An example of $K$ satisfying \eqref{eq:kernel} is
\begin{equation}\label{eq:K}
		K=\left(\frac{M_{\min}}{M_{\max}}\right)^{0^+}\left(\frac{N_{\min}}{N_{\max}}\right)^{0^+},\quad \delta>0.
\end{equation}
	Indeed, supposing w.l.o.g. that $M_1=\max_j M_j, N_1=\max_j N_j$,
	$$
	\sup_{M_1,N_1} \sum_{\substack{N_j, M_j\\ j=2,3,4}} \mathbbm{1}_\Gamma K^2 \lesssim 	\sup_{M_1,N_1} \prod_{j=2}^4\left(\sum_{M_j,N_j} \left(\frac{N_j^{2/3}}{N_1^{2/3}}\right)^{0^+}\left(\frac{M_j^{2/3}}{M_1^{2/3}}\right)^{0^+}\right) <\infty.
	$$
\end{rem}

\begin{prop}\label{prop:multi}
 The multilinear estimate \eqref{eq:multi} holds.
\end{prop}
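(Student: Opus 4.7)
By Lemma \ref{lem:reduct}, it suffices to establish the dyadic bound \eqref{eq:key_est} with some kernel $K$ obeying \eqref{eq:kernel}; I aim at $K$ of the form \eqref{eq:K} (possibly after trading a tiny power of $M_{\min}/M_{\max}$ or $N_{\min}/N_{\max}$ through a short interpolation argument). Fix a dyadic tuple $(M_j,N_j)_{j=1}^4 \in \Gamma$ and split $(\partial_x+\partial_y)P_4 u_4$ into its two components; by the $x \leftrightarrow y$ symmetry of the symbol $\xi^3+\eta^3$ and the nonlinearity $(\partial_x+\partial_y)(v^3)$, it suffices to treat $\partial_x P_4 u_4$, which carries a derivative loss of $M_4$. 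The constraint $\Gamma$ (coming from $\sum \xi_j=\sum \eta_j=0$) forces $M_{\max}$ to be attained at least twice among the $M_j$, and likewise for $N_{\max}$.

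The core tools are the $L^4$ Strichartz estimate \eqref{eq:L4_stri_U4} (yielding $\|P_j u_j\|_{L^4_{t,x,y}} \lesssim M_j^{-\frac18}N_j^{-\frac18}\|P_j u_j\|_{U^4_{\text{ZK}}}$), the bilinear $L^2$ estimates \eqref{eq:bilin_stri}--\eqref{eq:bilin_stri_2} (which gain $A^{-1/2}\min(B_1,B_2)^{1/2}$ when two factors have $\xi^2$-separation $A$ and bounded $\eta$-supports $B_i$), and the high-modulation estimate \eqref{eq:highmod}. After using the $\xi \leftrightarrow \eta$ symmetry to reduce to $M_{\max}\ge N_{\max}$, and the symmetry among $u_1,u_2,u_3$ to impose $M_1\ge M_2\ge M_3$, I carry out a case analysis on which two indices realize $M_{\max}$ and $N_{\max}$, and on the relative sizes of $M_j$ and $N_j$ within each factor.

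In the fully balanced range ($M_j\sim M$, $N_j\sim N$ for all $j$), a four-fold H\"older with four applications of \eqref{eq:L4_stri_U4} gives
\[
M_4\omega_4 \prod_{j=1}^4 M_j^{-\frac18}N_j^{-\frac18}/(\omega_1\omega_2\omega_3)=O(1),
\]
since in both subregimes $M\sim N$ and $M\gg N$ the weights combine to cancel the derivative loss exactly; summability is recovered through a small interpolation absorbed into the $(M_{\min}/M_{\max})^{0^+}(N_{\min}/N_{\max})^{0^+}$ factor of $K$. In imbalanced $M$-regimes, the fact that $M_{\max}$ is attained twice lets me pick a pair $(P_i u_i,P_k u_k)$ with $|\xi_i^2-\xi_k^2|\gtrsim M_{\max}^2$ outside a small wedge (handled by a further dyadic decomposition in $\xi_i+\xi_k$), apply \eqref{eq:bilin_stri}, and pair the remaining two factors via Strichartz. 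High-modulation subregions, where some $L_j$ is larger than the natural modulation scale dictated by the resonance relation, are absorbed by \eqref{eq:highmod} and reduce to an $L^2\times L^\infty$ H\"older.

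The main obstacle will be the anisotropic regime where $\omega_4$ is large, namely $M_4\gg N_4$ (or its symmetric counterpart), while $\omega_1\omega_2\omega_3$ is not large enough to compensate $\omega_4 M_4$ after a naive Strichartz split. In this region the relation $\eta_1+\eta_2+\eta_3=-\eta_4$ forces either all of $N_1,N_2,N_3$ to be small (which in turn forces matching anisotropy $\omega_j\sim \omega_4^{1/3}$ up to constants through the scales of $M_j$), or two of the $\eta_j$'s to cancel up to a small scale $B\ll N_{\max}$. The former recovers the balance directly; the latter enables the $\eta$-version of \eqref{eq:bilin_stri_2} with a small $B^{1/2}$ factor which is exactly the gain needed to beat $\omega_4$. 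Executing this last step carefully, including verifying that the wedge decompositions produce summable kernels $K$, is the heart of the proof.
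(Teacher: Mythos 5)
Your outline assembles the right toolkit (reduction via Lemma \ref{lem:reduct}, the $L^4$ estimate \eqref{eq:L4_stri_U4}, the bilinear bounds \eqref{eq:bilin_stri}--\eqref{eq:bilin_stri_2}, the high-modulation bound \eqref{eq:highmod}), and your balanced-case computation and the ``$M_{\max}$ attained twice, pair transversal factors'' step do match the easy cases of the paper (its Cases 3, 1A/1B, 2A/2B). But there is a genuine gap exactly where you yourself defer (``executing this last step carefully \dots is the heart of the proof''): the resonant configuration in which $M_{\max}\sim M_{\min}$ and the $\xi$-frequencies pair up with opposite signs, $\xi_1\simeq-\xi_2\simeq\xi_3\simeq-\xi_4$, while $N_{\max}\gg N_{\min}$. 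Inside your ``small wedge'' in $\xi_i+\xi_k$ every relevant difference $|\xi_i^2-\xi_k^2|$ degenerates, so the bilinear estimates give no gain, and the mechanism you propose does not repair this: in \eqref{eq:bilin_stri_2} (or its $\xi\leftrightarrow\eta$ analogue) the small factor $B^{1/2}$ is only useful in conjunction with a transversality gain $A^{-1/2}$ \emph{between the same two paired factors}, and near-cancellation $\eta_i\approx-\eta_k$ makes $|\eta_i^2-\eta_k^2|$ small rather than large. Likewise, the claim that $\eta_1+\eta_2+\eta_3=-\eta_4$ with $N_4$ small forces all of $N_1,N_2,N_3$ small (and then $\omega_j\sim\omega_4^{1/3}$) is false: the convolution constraint only forces cancellation, e.g.\ $N_1\sim N_2\gg N_3,N_4$ is allowed, and no relation between the ratios $M_j/N_j$ follows.

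What is actually needed in that case (the paper's Case 2C) is a finer argument that your sketch does not contain: a splitting of the output $\xi$-frequency $|\xi_1+\xi_4|$ at the carefully tuned threshold $\mathcal{M}=M_{\max}^{-1/2}N_{\max}N_{3\text{rd}}^{1/2}$, with the large part handled by \eqref{eq:bilin_stri} (gain $(\mathcal{M}M_{\max})^{-1/2}$ per pair) and the small part decomposed dyadically in $M\ll\mathcal{M}$; there, a translation/kernel trick disposes of the projections $P_{|\xi|\sim M}$ inside an $L^1$-H\"older pairing, and when the output scale and the near-antipodal $\xi$'s make the $\xi$-part of the resonance function $\Phi$ too small to cancel the $\eta$-part, one factor is forced to have modulation $\gtrsim N_1^2N_3$, which is then exploited through \eqref{eq:highmod} combined with $L^4$ Strichartz and Bernstein. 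None of this (the choice of threshold, the output-frequency summation, the resonance-function dichotomy, the translation trick) appears in your plan, and your generic remark that high-modulation regions ``are absorbed by \eqref{eq:highmod}'' does not identify where the large modulation comes from in the resonant case. As written, the proposal proves the non-resonant cases but leaves the decisive case open.
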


\begin{proof}
	By Lemma \ref{lem:reduct}, it suffices to check \eqref{eq:key_est} for $K$ given by \eqref{eq:K}.
	
	 Due to the convolution relations
\begin{equation}\label{eq:convol}
		\sum_{j=1}^4 \xi_j = \sum_{j=1}^4 \eta_j =  0,
\end{equation}
	we always have $M_{\max} \sim  M_{2\text{nd}}$ and $N_{\max} \sim  N_{2\text{nd}}$.
	By symmetry, we suppose that $M_4\gtrsim N_4$. Writing $v_j=P_ju_j$, our goal is to derive the bound
	\begin{equation}\label{eq:key_est2}
\frac{M_4^{\frac{5}{4}}}{N_4^{\frac{1}{4}}}\left|\int v_1v_2v_3v_4 dxdydt\right| \lesssim \left(\prod_{j=1}^3 \omega_j \|v_j\|_{V^2_{\text{ZK}}}\right)\|v_4\|_{V^2_{\text{ZK}}} \cdot K.
	\end{equation}
We denote by $I$ the absolute value of the integral on the l.h.s.. Using the $L^4$ Strichartz estimate \eqref{eq:L4_stri_U4}, we have
\begin{equation}\label{eq:L4U4}
	I\lesssim M_{\max}^{-\frac 14}M_{\min}^{-\frac 14}N_{\max}^{-\frac 14}N_{\min}^{-\frac 14}\prod_{j=1}^4 \|v_j\|_{U^4_{\text{ZK}}}.
\end{equation}

\noindent\underline{Case 1. $M_{\max}\gg M_{\min}$.} We have either
$$
A: 8M_{3\text{rd}}\le M_{\max}\qquad \mbox{ or }\qquad B: 8M_{3\text{rd}}>M_{\max}.
$$

\noindent\underline{Case 1A.}
Since
\begin{equation}\label{eq:nstat_xi}
	|\xi_{\max}^2-\xi_{3\text{rd}}^2|\gtrsim M_{\max}^2,\quad |\xi_{2\text{nd}}^2-\xi_{\min}^2|\gtrsim M_{\max}^2,
\end{equation}
after ordering $v_j$ according to the $\xi$ variable, 
 Lemma \ref{lem:bilin_stri} and \eqref{eq:nstat_xi} imply that
\begin{equation}\label{eq:1A}
	I\lesssim \|v_{\max}v_{3\text{rd}}\|_{L^2_{t,x,y}} \|v_{2\text{nd}}v_{\min}\|_{L^2_{t,x,y}} \lesssim  M_{\max}^{-2}N_{\max}^{\frac 12} N_{\min}^{\frac 12} \prod_{j=1}^4\|v_j\|_{U^2_{\text{ZK}}}.
\end{equation}
Interpolating with \eqref{eq:L4U4},
\begin{align}
	\frac{M_4^{\frac 54}}{N_4^{\frac 14}}I&\lesssim M_{\max}^{-2} N_{\max}^{\frac 12} N_{\min}^{\frac 12}M_4^{\frac 54}N_4^{-\frac 14} \ln \left\langle \frac{M_{\max}^{-\frac 14}M_{\min}^{-\frac 14}N_{\max}^{-\frac 14}N_{\min}^{-\frac 14}}{M_{\max}^{-2}N_{\max}^{\frac 12} N_{\min}^{\frac 12}} \right\rangle\prod_{j=1}^4\|v_j\|_{V^2_{\text{ZK}}}\\&\lesssim M_{\max}^{-2} N_{\max}^{\frac 12} N_{\min}^{\frac 12}M_4^{\frac 54}N_4^{-\frac 14}\left(1+ \frac{M_{\max}^{\frac{7\epsilon}{4}}}{M_{\min}^{\frac{\epsilon}{4}}N_{\max}^{\frac{3\epsilon}{4}} N_{\min}^{\frac{3\epsilon}{4}}} \right)\prod_{j=1}^4\|v_j\|_{V^2_{\text{ZK}}}.\label{eq:est_nstat_xi}
\end{align}
Given $\delta\in[0,\epsilon]$ and assuming that $N_3\le N_2\le N_1$, it is easy to check that
\begin{equation}\label{eq:estN}
	N_{\max}^{\frac12-\frac{3\delta}{4}}N_{\min}^{\frac12-\frac{3\delta}{4}}N_4^{-\frac 14} \lesssim  \left(\frac{N_{\min}}{N_{\max}}\right)^{0^+} (N_1N_2)^{\frac 14-\frac{3\delta}{8}+\frac12\cdot 0^+}N_3^{\frac{1}{4}-\frac{3\delta}{4}-0^+}
\end{equation}
and
\begin{align}
	M_{\max}^{-2+\frac{7\delta}{4}}M_{\min}^{-\frac{\delta}{4}}M_4^{\frac 54} &\lesssim \left(\frac{M_{\min}}{M_{\max}}\right)^{0^+} {(M_1M_2M_3)^{-\frac{\delta}{4}-0^+}}{}M_{\max}^{-\frac34 +\frac{9\delta}{4}+3\cdot0^+}\nonumber\\ &\lesssim \left(\frac{M_{\min}}{M_{\max}}\right)^{0^+} (M_1M_2)^{-\frac14 + \frac{3\delta}{8}-\frac12\cdot 0^+}M_3^{-\frac14 + \frac{3\delta}{4}+ 0^+}.\label{eq:estM}
\end{align}
Multiplying the expressions in \eqref{eq:estN} and \eqref{eq:estM}, we find that
\begin{align*}
    M_{\max}^{-2+\frac{7\delta}{4}}M_{\min}^{-\frac{\delta}{4}}M_4^{\frac 54}N_{\max}^{\frac12-\frac{3\delta}{4}}N_{\min}^{\frac12-\frac{3\delta}{4}}N_4^{-\frac 14}\lesssim \left(\frac{M_{\min}}{M_{\max}}\right)^{0^+} \left(\frac{N_{\min}}{N_{\max}}\right)^{0^+} \omega_1\omega_2\omega_3.
\end{align*}
Inserting in \eqref{eq:est_nstat_xi},
\begin{align*}
	\frac{M_4^{\frac 54}}{N_4^{\frac 14}}I \lesssim \left(\frac{M_{\min}}{M_{\max}}\right)^{0^+}\left(\frac{N_{\min}}{N_{\max}}\right)^{0^+} \omega_1\omega_2\omega_3\prod_{j=1}^4\|v_j\|_{V^2_{\text{ZK}}}
\end{align*}
and \eqref{eq:key_est2} follows.

\noindent\underline{Case 1B.} By harmless decomposition\footnote{We say that a decomposition is harmless if the number of blocks is bounded by a universal constant.}, we may assume that there exist $m_j \in \R$ satisfying $|m_j| \sim M_j$ such that
\[
\supp \F_{t,x,y} {v_j} \subset \{(\tau,\xi,\eta) \in \R^3 \, | \, |\xi-m_j| < 2^{-10}M_{3rd}\}.
\]
Then  $||m_{\max}|-|m_{3\text{rd}}|| \gtrsim M_{\max}$  (otherwise $M_{\min}\sim M_{\max}$, see p.147--148 in \cite{Kinoshita22} for a similar discussion). In particular, \eqref{eq:nstat_xi} holds and we may proceed as in Case 1A.

\medskip

%\noindent\underline{Case 1. Non-stationary in both $\xi$ and $\eta$.} Suppose that
%\begin{equation}\label{eq:nonstat_xi}
%|\xi_{\max}^2-\xi_{3\text{rd}}^2|,	|\xi_{2\text{nd}}^2-\xi_{\min}^2|  \gtrsim M_{\max}^2,
%\end{equation}
%\begin{equation}\label{eq:nonstat_eta}	|\eta_{\max}^2-\eta_{3\text{rd}}^2|,	|\eta_{2\text{nd}}^2-\eta_{\min}^2|  \gtrsim N_{\max}^2.
%\end{equation}
%Applying Lemma \ref{lem:bilin_stri}, \eqref{eq:nonstat_xi} implies
%\begin{equation}
%	I\lesssim  M_{\max}^{-2}N_{\max}^{\frac 12} N_{\min}^{\frac 12} \prod_{j=1}^4\|v_j\|_{U^2_{\text{ZK}}},
%\end{equation}
%while \eqref{eq:nonstat_eta} gives
%\begin{equation}
%	I\lesssim  N_{\max}^{-2}M_{\max}^{\frac 12} M_{\min}^{\frac 12} \prod_{j=1}^4\|v_j\|_{U^2_{\text{ZK}}}.
%\end{equation}

\noindent\underline{Case 2. $M_{\max}\sim M_{\min}$ and $N_{\max}\gg N_{\min}$}. For $\ell \in \Z$, define 
\[
\mathcal{A}_{\ell} = \{(\tau,\xi,\eta) \in \R^3 \, | \, |\xi - 2^{-10} \ell M_1| \le 2^{-11} M_1\}.
\]
By harmless decomposition, we may assume $\supp \F_{t,x,y} u_j \subset \mathcal{A}_{\ell_j}$ for some fixed $\ell_j$. 

\medskip

\noindent\underline{Case 2A.} If $||\ell_{2\text{nd}}|-|\ell_{\min}||>8$, the convolution relation \eqref{eq:convol} implies that  $||\ell_{\max}|-|\ell_{3\text{rd}}||>4$. In particular, \eqref{eq:nstat_xi} holds and we proceed again as in Case 1A. 

\medskip

\noindent\underline{Case 2B.}  If $||\ell_{2\text{nd}}|-|\ell_{\min}||\le 8$ and $||\ell_{\max}|-|\ell_{3\text{rd}}||>16$, then
$$
||\ell_{\max}|-|\ell_{2\text{nd}}||>8,
$$
which means that $\xi_{\max}$ is far from the remaining $\xi$ frequencies.
By symmetry, we may suppose that that $\xi_1$ corresponds to the largest $\xi$ frequency and that $N_2\ge N_3\ge N_4$. Then $N_2\sim N_{\max}$ and $\min\{N_1,N_4\}\sim N_{\min}$.
%
% We consider only the worst-case scenario $N_1\ge N_2\ge N_3\ge N_4$. We split into the cases
%$$
%A: N_1 > 8N_3\qquad \mbox{ or }\qquad B: N_1 \le 8N_3.
%$$
%\noindent\underline{Case 2A.}
%
%
%
%\begin{equation}
%	\min\left\{|\xi_{\max}^2-\xi_{3\text{rd}}^2|,	|\xi_{2\text{nd}}^2-\xi_{\min}^2|\right\}  \ll M_{\max}^2,
%\end{equation}
%and $N_{\min}\ll N_{\max}$.
%%\begin{equation}	\min\left\{|\eta_{\max}^2-\eta_{3\text{rd}}^2|,	|\eta_{2\text{nd}}^2-\eta_{\min}^2|\right\}  \gtrsim N_{\max}^2,
%%\end{equation}
%The first condition implies that 
%$$
%A: -3\xi_{\max}\simeq \xi_{2\text{nd}} \simeq \xi_{3\text{rd}} \simeq \xi_{\min}\quad \text{or}\quad B: |\xi_{\max}|\simeq |\xi_{2\text{nd}}| \simeq |\xi_{3\text{rd}}| \simeq |\xi_{\min}| 
%$$
%In either case, $M_{\max}\sim M_{\min}$.
%
%\medskip
%\noindent
%\underline{Case 2A.} 
%By symmetry, we may suppose that that $v_1$ corresponds to the largest $\xi$ frequency and that $N_2\ge N_3\ge N_4$. Then $N_2\sim N_{\max}$ and $\min\{N_1,N_4\}\sim N_{\min}$.
Applying both the bilinear Strichartz estimate \eqref{eq:bilin_stri} and the $L^4$-Strichartz estimate \eqref{eq:L4_stri},
\begin{align*}
	I&\lesssim \|v_1v_4\|_{L^2_{t,x,y}}\|v_2\|_{L^4_{t,x,y}}\|v_3\|_{L^4_{t,x,y}}\\
	&\lesssim M_{\max}^{-1}N_{\min}^{\frac 12}\cdot M_{2}^{-\frac18}N_{2}^{-\frac18}\cdot M_{3}^{-\frac18}N_{3}^{-\frac18}\prod_{j=1}^4\|u_j\|_{U^2_{\text{ZK}}}\\&\lesssim M_{\max}^{-\frac 54}N_{\max}^{-\frac 18}N_{\min}^{\frac 38}\prod_{j=1}^4\|u_j\|_{U^2_{\text{ZK}}}
\end{align*}
The interpolation with \eqref{eq:L4U4} yields
\begin{align*}
	\frac{M_4^{\frac54}}{N_4^\frac14}I &\lesssim  N_{\max}^{-\frac 18}N_{\min}^{\frac 18}\ln\left\langle\frac{M_{\max}^{-\frac 14}M_{\min}^{-\frac 14}N_{\max}^{-\frac 14}N_{\min}^{-\frac 14}}{ M_{\max}^{-\frac 54}N_{\max}^{-\frac 18}N_{\min}^{\frac 38}} \right\rangle\prod_{j=1}^4\|u_j\|_{V^2_{\text{ZK}}}\\ &\lesssim N_{\max}^{-\frac 18}N_{\min}^{\frac 18}\left(\frac{M_{\max}^{\frac 34}}{ N_{\max}^{\frac 18}N_{\min}^{\frac 58}} \right)^\epsilon\prod_{j=1}^4\|u_j\|_{V^2_{\text{ZK}}}
	\\ &\lesssim \left(\frac{N_{\min}}{N_{\max}}\right)^{\frac18 -\frac{5\epsilon}{8}} \cdot \left(\frac{M_{\max}}{N_{\max}}\right)^{ \frac{3\epsilon}{4}} \prod_{j=1}^4\|u_j\|_{V^2_{\text{ZK}}}
	\\ &\lesssim \left(\frac{N_{\min}}{N_{\max}}\right)^{\frac18 -\frac{5\epsilon}{8}} \cdot (\omega_1\omega_2\omega_3)^{\frac{\epsilon}{4}} \prod_{j=1}^4\|u_j\|_{V^2_{\text{ZK}}}\\ &\lesssim \left(\frac{N_{\min}}{N_{\max}}\right)^{0^+} \cdot \omega_1\omega_2\omega_3 \prod_{j=1}^4\|u_j\|_{V^2_{\text{ZK}}}.
\end{align*}

\medskip

\noindent\underline{Case 2C.}  If $||\ell_{2\text{nd}}|-|\ell_{\min}||\le 8$ and $||\ell_{\max}|-|\ell_{3\text{rd}}||\le 16$, then all $\xi$ frequencies are close to one another in absolute value.  By symmetry, we assume that 
\[
\xi_1\simeq -\xi_2\simeq \xi_3\simeq -\xi_4.
\]
Furthermore, we focus on the case $N_1\ge N_2 \ge N_3 \ge N_4$, as it corresponds to the worst-case scenario.
%In particular,
%$$
% \min\{|\eta_2|,|\eta_3|\}\lesssim N_{3\text{rd}},\quad |\eta_1^2-\eta_4^2|\gtrsim N_{\max}^2.
%$$
%We claim that, up to permutation of $\eta_1$ and $\eta_3$, we can further suppose that
%\begin{equation}\label{eq:hip_nstat_eta}
%	\min\{|\eta_2|, |\eta_3|\} \lesssim N_{3\text{rd}},\quad |\eta_1^2-\eta_4^2|\gtrsim N_{\max}^2,\quad |\eta_2^2-\eta_3^2|\gtrsim N_{\max}^2.
%\end{equation}If \eqref{eq:hip_nstat_eta} does not hold, that is, if $|\eta_2^2-\eta_3^2|\ll N_{\max}^2$, then
%$$
%\eta_1\simeq \frac{\eta_2}{2} \simeq \frac{\eta_3}{2} \gg \eta_4.
%$$
%In particular,
%$$
% \min\{|\eta_1|, |\eta_2|\} \lesssim N_{3\text{rd}},\quad |\eta_3^2-\eta_4^2|\gtrsim N_{\max}^2,\quad |\eta_1^2-\eta_2^2|\gtrsim N_{\max}^2,
%$$
%which is \eqref{eq:hip_nstat_eta} after exchanging $\eta_1$ and $\eta_3$, as claimed.

%Assuming that \eqref{eq:hip_nstat_eta} holds, 
Set
$$
\mathcal{M}=M_{\max}^{-\frac12}N_{\max}N_{3\text{rd}}^{\frac 12}
$$
and split
\begin{align*}
	I &\le \left|\int P_{|\xi|< \mathcal{M}}(v_1v_4) P_{|\xi|< \mathcal{M}}(v_2v_3)dxdydt\right| + \left|\int P_{|\xi|> \mathcal{M}}(v_1v_4) P_{|\xi|> \mathcal{M}}(v_2v_3)dxdydt\right|\\&=:I_1+I_2.
\end{align*}
We first estimate $I_2$. Observe that, if $|\xi_1+\xi_4|=|\xi_2+\xi_3|>\mathcal{M}$, then 
$$
|\xi_1^2-\xi_4^2|\gtrsim \mathcal{M}M_{\max},\quad |\xi_2^2-\xi_3^2|\gtrsim \mathcal{M}M_{\max}
$$
and the bilinear Strichartz estimate \eqref{eq:bilin_stri} yields
\begin{align*}
	I_2&\lesssim  \|P_{|\xi|>\mathcal{M}}(v_1v_4)\|_{L^{2}_{t,x,y}}\|P_{|\xi|>\mathcal{M}}(v_2v_3)\|_{L^{2}_{t,x,y}} \\&\lesssim (\mathcal{M}M_{\max})^{-1}N_{3\text{rd}}^{\frac 12}N_{\min}^{\frac 12} \prod_{j=1}^4 \|v_j\|_{U^2_{\text{ZK}}}\\& \lesssim M_{\max}^{-\frac12}N_{\max}^{-1}N_{\min}^{\frac 12} \prod_{j=1}^4 \|v_j\|_{U^2_{\text{ZK}}}.
\end{align*}
Interpolating with \eqref{eq:L4U4},
\begin{align*}
	\frac{M_4^{\frac54}}{N_4^{\frac14}}I_2 &\lesssim M_{\max}^{\frac34}N_{\max}^{-1}N_{\min}^{\frac 12}N_4^{-\frac14}\ln \left\langle\frac{M_{\max}^{-\frac12}N_{\max}^{-\frac 14}N_{\min}^{-\frac14}}{M_{\max}^{-\frac12}N_{\max}^{-1}N_{\min}^{\frac 12} }\right\rangle\prod_{j=1}^4 \|v_j\|_{V^2_{\text{ZK}}}\\&\lesssim M_{\max}^{\frac34}N_{\max}^{-1+\frac{3\epsilon}{4}}N_{\min}^{\frac 12-\frac{3\epsilon}{4}}N_4^{-\frac14}\prod_{j=1}^4 \|v_j\|_{V^2_{\text{ZK}}}\\&\lesssim \left(\frac{N_{\min}}{N_{\max}}\right)^{0^+}\omega_1\omega_2\omega_3\prod_{j=1}^4 \|v_j\|_{V^2_{\text{ZK}}}.
\end{align*}
It now remains to bound $I_1$.
If we prove that 
\begin{align}
		I_1 &\lesssim \sum_{M\ll \mathcal{M}} \left|\int P_{|\xi|\sim M}(v_1v_2)P_{|\xi|\sim M}(v_3v_4)dxdydt\right| \\&\lesssim N_{1}^{-\frac58}M_{1}^{-\frac12}N_{3}^{-\frac38}N_4^{\frac 12} \prod_{j=1}^4\|v_j\|_{U^2_{\text{ZK}}},\label{eq:estI1}
\end{align}
the interpolation with \eqref{eq:L4U4} gives
\begin{align*}
	\frac{M_4^{\frac54}}{N_4^{\frac14}}I_1 &\lesssim M_1^{\frac34} N_1^{-\frac58}N_{3}^{-\frac38}N_{4}^{\frac12}\ln\left\langle \frac{M_1^{-\frac12}N_1^{-\frac 14}N_4^{-\frac14}}{N_{1}^{-\frac58}M_{1}^{-\frac12}N_{3}^{-\frac38}N_4^{\frac 12}}\right\rangle\prod_{j=1}^4\|v_j\|_{V^2_{\text{ZK}}}\\&\lesssim M_1^{\frac34}N_1^{-\frac58+\frac{3\epsilon}{8}}N_{3}^{-\frac38 + \frac{3\epsilon}{8}}N_4^{\frac12-\frac{3\epsilon}{4}}\prod_{j=1}^4 \|v_j\|_{V^2_{\text{ZK}}}\\&\lesssim \left(\frac{N_{\min}}{N_{\max}}\right)^{0^+}\omega_1\omega_2\omega_3\prod_{j=1}^4 \|v_j\|_{V^2_{\text{ZK}}}
\end{align*}
and we are done. We now split the proof of \eqref{eq:estI1} in two cases.

\noindent \underline{Case 2C1. $N_4 \gtrsim N_3$.} We claim that
\begin{equation}
	\label{est03_Case2}
	\biggl| \int P_{|\xi|\sim M} (v_1 v_2) P_{|\xi| \sim M} (v_3 v_4) dt dx dy \biggr| \lesssim M N_1^{-2} \prod_j \|v_j\|_{U^2_{\text{ZK}}}.
\end{equation}
By almost orthogonality, we may assume that there exist $k_j \in \R$ such that
\[
\supp \F_{t,x,y} v_j \subset \{(\tau,\xi,\eta) \in \R^3 \, | \, |\xi - k_j|\lesssim M \}.
\]
Notice that $P_{|\xi| \sim M}f = \F_{\xi}^{-1}\psi_M * f$ and $\|\F_{\xi}^{-1}\psi_M\|_{L_x^1}\sim 1$ imply
\begin{align*}
	\biggl| \int P_{|\xi|\sim M} (v_1 v_2) P_{|\xi| \sim M} (v_3 v_4) dt dx dy \biggr| \lesssim \sup_{x_1,x_2 \in \R} \|\tau^{x_1}(v_1 v_2)\tau^{x_2}(v_3 v_4)\|_{L_{t,x,y}^1},
\end{align*}
where $\tau^{x_1}f(t,x,y)= f(t,x-x_1,y)$. Clearly, for any $x_1$, $\|\tau^{x_1} u\|_{U^2_{\text{ZK}}} = \| u \|_{U^2_{\text{ZK}}}$. 
Then, it follows from $N_1 \geq N_2 \gg  N_4 \gtrsim N_3$ that
\begin{align*}
	\biggl| \int P_{|\xi|\sim M} (v_1 v_2) P_{|\xi| \sim M} (v_3 v_4) dt dx dy \biggr| 
	&  \lesssim  \sup_{x_1,x_2} \|\tau^{x_1}v_1 \tau^{x_2} v_3\|_{L_{t,x,y}^2} 
	\|\tau^{x_1}v_2 \tau^{x_2} v_4\|_{L_{t,x,y}^2} \\
	& \lesssim   M N_1^{-2} \prod_j \|v_j\|_{U^2_{\text{ZK}}},
\end{align*}
which completes the proof of \eqref{est03_Case2}.
%
% which we split further into two subcases.
%
%\noindent \underline{Case 2C1.} If $N_{\max}\sim N_{2\text{nd}}\gg N_{3\text{rd}}\sim N_{\min}$, since
%\begin{equation}\label{eq:nstat_eta}
%	|\eta_1^2-\eta_4^2|\gtrsim N_{\max}^2,\quad |\eta_2^2-\eta_3^2|\gtrsim N_{\max}^2,
%\end{equation}
%the bilinear Strichartz estimate \eqref{eq:bilin_stri_2} gives

In particular,
\begin{align*}
	I_1&\lesssim  \sum_{M\ll \mathcal{M}} \left|\int P_{|\xi|\sim M}(v_1v_2)P_{|\xi|\sim M}(v_3v_4)dxdydt\right| \\&\lesssim \sum_{M\ll \mathcal{M}} N_1^{-2}M \prod_{j=1}^4\|v_j\|_{U^2_{\text{ZK}}}\\&\lesssim N_1^{-1}M_1^{-\frac12}N_3^{\frac12} \prod_{j=1}^4\|v_j\|_{U^2_{\text{ZK}}}\lesssim N_{1}^{-\frac58}M_{1}^{-\frac12}N_{3}^{-\frac38}N_4^{\frac 12} \prod_{j=1}^4\|v_j\|_{U^2_{\text{ZK}}}
\end{align*}
which proves \eqref{eq:estI1}.
%and the interpolation with \eqref{eq:L4U4} gives
%\begin{align*}
%	\frac{M_4^{\frac54}}{N_4^{\frac14}}I_1 &\lesssim M_{\max}^{\frac34} N_{\max}^{-1}N_{3\text{rd}}^{\frac12}N_{4}^{-\frac14}\ln\left\langle \frac{M_{\max}^{-\frac12}N_{\max}^{-\frac 14}N_{\min}^{-\frac14}}{N_{\max}^{-1}M_{\max}^{-\frac12}N_{3\text{rd}}^{\frac12}}\right\rangle\prod_{j=1}^4\|v_j\|_{V^2_{\text{ZK}}}\\&\lesssim M_{\max}^{\frac34}N_{\max}^{-1+\frac{3\epsilon}{4}}N_{3\text{rd}}^{\frac12}N_{\min}^{-\frac{3\epsilon}{4}}N_4^{-\frac14}\prod_{j=1}^4 \|v_j\|_{V^2_{\text{ZK}}}\\&\lesssim \left(\frac{N_{\min}}{N_{\max}}\right)^{0^+}\omega_1\omega_2\omega_3\prod_{j=1}^4 \|v_j\|_{V^2_{\text{ZK}}}.
%\end{align*}

\noindent \underline{Case 2C2. $N_3\gg N_4$.} By the almost orthogonality, we may assume that there exist $k_j \in \R$ such that
\[
\supp \F_{t,x,y} v_j \subset \{(\tau,\xi,\eta) \in \R^3 \, : \, |\xi - k_j|\lesssim M \}.
\]
We divide the proof of \eqref{eq:estI1} into the two cases 
$$|k_2+k_3| \gtrsim M^{-1} M_1^{-1}N_1^2 N_3\quad \mbox{ and }\quad |k_2+k_3| \ll M^{-1} M_1^{-1}N_1^2 N_3.$$ 
Note that $M \ll \mathcal{M}$ implies $M^{-1} M_1^{-1}N_1^2 N_3 \ll M$. 

For the first case $|k_2+k_3| \gtrsim M^{-1} M_1^{-1}N_1^2 N_3$, since $|k_2| \sim |k_3| \sim M_1$ and $M^{-1} M_1^{-1}N_1^2 N_3 \lesssim |k_2+k_3| \ll M_1$, we have $|k_2^2-k_3^2| \gtrsim M^{-1}N_1^2 N_3$. Therefore, the $L^2$ bilinear estimates give
\begin{align*}
	\biggl| \int P_{|\xi|\sim M} (v_1 v_2) P_{|\xi| \sim M} (v_3 v_4) dt dx dy \biggr| 
	& \lesssim \sup_{x_1,x_2 \in \R} \|\tau^{x_1}(v_1 v_2)\tau^{x_2}(v_3 v_4)\|_{L_{t,x,y}^1}\\
	& \leq \sup_{x_1,x_2} \|\tau^{x_1}v_1 \tau^{x_2} v_4\|_{L_{t,x,y}^2} 
	\|\tau^{x_1}v_2 \tau^{x_2} v_3\|_{L_{t,x,y}^2} \\
	& \lesssim   M N_1^{-2} N_3^{-\frac12} N_4^{\frac12} \prod_j \|v_j\|_{U^2_{\text{ZK}}}
\end{align*}
and summing in $M$ gives
\begin{align*}
	I_1 &\lesssim \sum_{M\ll \mathcal{M}}	\biggl| \int P_{|\xi|\sim M} (v_1 v_2) P_{|\xi| \sim M} (v_3 v_4) dt dx dy \biggr| \\&\lesssim \sum_{M\ll \mathcal{M}} M N_1^{-2} N_3^{-\frac12} N_4^{\frac12} \prod_j \|v_j\|_{U^2_{\text{ZK}}}\lesssim N_{1}^{-\frac58}M_{1}^{-\frac12}N_{3}^{-\frac38}N_4^{\frac 12} \prod_{j=1}^4\|v_j\|_{U^2_{\text{ZK}}}
\end{align*}
which proves \eqref{eq:estI1}.

To handle the second case $|k_2+k_3| \ll M^{-1} M_1^{-1}N_1^2 N_3$, we define the resonance function
\begin{align*}
	\Phi(\xi_1,\xi_2,\xi_3,\eta_1,\eta_2,\eta_3) &= \Phi_{\xi}(\xi_1,\xi_2,\xi_3) + \Phi_{\eta}(\eta_1,\eta_2,\eta_3)\\&= \sum_{j=1}^4 \xi_j^3 + \sum_{j=1}^4 \eta_j^3 \\
	& = (\xi_1+\xi_2)(\xi_2+\xi_3)(\xi_3+\xi_1)+ (\eta_1+\eta_2)(\eta_2+\eta_3)(\eta_3+\eta_1).
\end{align*}
From the assumptions, for $(\xi_j,\eta_j) \in \supp \F_{t,x,y} v_j$, 
\[
|\Phi| \geq |\Phi_{\eta}| - |\Phi_{\xi}| \gtrsim N_1^2 N_3.
\]
Hence there exists $1 \leq j \leq 4$ such that
\[
\supp \F_{t,x,y} v_j \subset \{(\tau,\xi,\eta) \in \R^3 \, :\ |\tau-\xi^3 - \eta^3| \gtrsim N_1^2 N_3\}.
\]
First, we assume that this holds for $j=1$. Then, using the high-modulation inequality \eqref{eq:highmod} for $v_1$, the $L^4$ Strichartz estimate \eqref{eq:L4_stri_U4} for $v_2, v_3$, and Bernstein's inequality for $v_4$, we deduce
\begin{align}
	& \biggl| \int P_{|\xi|\sim M} (v_1 v_2) P_{|\xi| \sim M} (v_3 v_4) dt dx dy \biggr|\nonumber \\
	\lesssim\  & \sup_{x_1,x_2 \in \R} \|\tau^{x_1}(v_1 v_2)\tau^{x_2}(v_3 v_4)\|_{L_{t,x,y}^1}\nonumber\\
	\lesssim \  & \|v_1\|_{L_{t,x,y}^2} \|v_2\|_{L_{t,x,y}^4} \|v_3\|_{L_{t,x,y}^4} \|v_4\|_{L_{t,x,y}^{\infty}}\label{eq:high_mod_1}\\
	\lesssim\  & N_1^{-1} N_3^{-\frac12} \|v_1\|_{V^2_{\text{ZK}}}\cdot M_1^{-\frac18} N_1^{-\frac18} \|v_2\|_{U^4_{\text{ZK}}} \cdot M_1^{-\frac18} N_3^{-\frac18} \|v_3\|_{U^4_{\text{ZK}}} \cdot M^{\frac12} N_4^{\frac12} \|v_4\|_{L_t^{\infty} L_{x,y}^2}\nonumber\\
	\lesssim\  & M^{\frac12} M_1^{-\frac14} N_1^{-\frac98} N_3^{-\frac58} N_4^{\frac12}  \prod_j \|v_j\|_{U^2_{\text{ZK}}}.\nonumber
\end{align}
Therefore
\begin{align*}
	I_1 &\lesssim \sum_{M\ll \mathcal{M}}	\biggl| \int P_{|\xi|\sim M} (v_1 v_2) P_{|\xi| \sim M} (v_3 v_4) dt dx dy \biggr| \\&\lesssim \sum_{M\ll \mathcal{M}}  M^{\frac12} M_1^{-\frac14} N_1^{-\frac98} N_3^{-\frac58} N_4^{\frac12}  \prod_j \|v_j\|_{U^2_{\text{ZK}}}\lesssim N_{1}^{-\frac58}M_{1}^{-\frac12}N_{3}^{-\frac38}N_4^{\frac 12} \prod_{j=1}^4\|v_j\|_{U^2_{\text{ZK}}}
\end{align*}
thus completing the proof of \eqref{eq:estI1}.
The cases $j=2,3$ can be treated in a similar way. If the above Fourier support condition holds for $j=4$, we use the $L^4$ Strichartz estimate and Bernstein's inequality again as
\begin{align}
	& \biggl| \int P_{|\xi|\sim M} (v_1 v_2) P_{|\xi| \sim M} (v_3 v_4) dt dx dy \biggr|\nonumber \\
	\lesssim\  & \sup_{x_1,x_2 \in \R} \|\tau^{x_1}(v_1 v_2)\tau^{x_2}(v_3 v_4)\|_{L_{t,x,y}^1}\nonumber\\
	\lesssim \  & \|v_1\|_{L_{t,x,y}^4} \|v_2\|_{L_{t,x,y}^4} \|v_3\|_{L_t^{\infty}L_{x,y}^2} \|v_4\|_{L_t^2L_{x,y}^{\infty}}\label{eq:high_mod_4}\\
	\lesssim\  &  M_1^{-\frac18}N_1^{-\frac18}\|v_1\|_{U^4_{\text{ZK}}}\cdot M_1^{-\frac18} N_1^{-\frac18} \|v_2\|_{U^4_{\text{ZK}}}    \|v_3\|_{L_t^{\infty}L_{x,y}^2} \cdot M^{\frac12} N_4^{\frac12} \|v_4\|_{L_{t,x,y}^2}\nonumber\\
	\lesssim\  &  M^{\frac12} M_1^{-\frac14} N_1^{-\frac14} N_4^{\frac12} \|v_1\|_{U^4_{\text{ZK}}}  \|v_2\|_{U^4_{\text{ZK}}}   \|v_3\|_{L_t^{\infty}L_{x,y}^2}  \cdot N_1^{-1}N_3^{-\frac12}\|v_4\|_{V^2_{\text{ZK}}}\nonumber\\
	\lesssim\  & M^{\frac12} M_1^{-\frac14} N_1^{-\frac54} N_3^{-\frac12} N_4^{\frac12}  \prod_j \|v_j\|_{U^2_{\text{ZK}}}.\nonumber
\end{align}
This yields \eqref{eq:estI1}.

\medskip
\noindent\underline{Case 3. $M_{\max}\sim M_{\min}$ and $N_{\max}\sim N_{\min}$.} In this case, \eqref{eq:L4U4} gives directly
\begin{align*}
	\frac{M_4^{\frac{5}{4}}}{N_4^{\frac{1}{4}}}I \lesssim (M_1M_2M_3)^{\frac 14}(N_1N_2N_3)^{-\frac 14}\prod_{j=1}^4\|v_j\|_{U^4_{\text{ZK}}}\lesssim \left(\prod_{j=1}^3 \omega_j\|v_j\|_{V^2_{\text{ZK}}}\right) \|v_4\|_{V^2_{\text{ZK}}}.
\end{align*}
\end{proof}
%
%\begin{prop}\label{prop:multi_infty}
%	The multilinear estimate \eqref{eq:multi} holds for $T=\infty$. Moreover,
%	\begin{equation}
%		\left\|\mathbbm{1}_{(T,\infty)} \int_0^t e^{-(t-s)(\partial_x^3+\partial_y^3)}(\partial_x+\partial_y)u_1u_2u_3(s)ds \right\|_{Y} \to 0\quad \mbox{as }T\to \infty
%	\end{equation}
%and the limit
%\begin{equation}
%	\lim_{t\to\infty} \int_0^t e^{s(\partial_x^3+\partial_y^3)}(\partial_x+\partial_y)u_1u_2u_3(s)ds 
%\end{equation}
%exists in $X$.
%\end{prop}
%\begin{proof}
%	The proof is follows from the same arguments as for $T<\infty$ (Proposition \ref{prop:multi}), together with the observations in \cite{HHK09-2}.
%\end{proof}

\begin{cor}\label{cor:crit_mod}
Given $L_j\in 2^{\N_0}$ and $M_j,N_j\in 2^\Z$, write
$$
v_j=Q_{L_j}P_{M_j,N_j} u_j.
$$
and consider $K$ as in \eqref{eq:K}.
Then
    	\begin{align}
		\omega_4\Bigg|\int v_1v_2v_3v_4\cdot &  (\partial_x+\partial_y)v_4 dxdydt\Bigg|\nonumber \\&\lesssim (L_1L_2L_3L_4)^\frac12\left(\prod_{j=1}^3 \omega_j \|v_j\|_{L^2_{x,y,t}}\right)\|v_4\|_{L^2_{x,y,t}} \cdot K.\label{eq:key_est3}
	\end{align}
\end{cor}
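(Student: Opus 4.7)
The corollary is the modulation-localized analog of \eqref{eq:key_est} proved in Proposition \ref{prop:multi}. I would rerun its case analysis, substituting at each step the atomic-space-valued Strichartz bounds by their $L^2_{t,x,y}$ counterparts from Section \ref{sec:prelim}: the $L^4$ estimate \eqref{eq:L4_stri_U4} is replaced by \eqref{eq:L4_stri_L}, and the bilinear estimates \eqref{eq:bilin_stri}, \eqref{eq:bilin_stri_2} by \eqref{eq:bilinear_L}, \eqref{eq:bilinear_BL}. Each substitution exchanges a $U^2_{\text{ZK}}$- or $U^4_{\text{ZK}}$-norm of $v_j$ for $L_j^{1/2}\|v_j\|_{L^2_{t,x,y}}$. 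Since every case organizes the four factors via H\"older so that each $v_j$ is controlled by exactly one Strichartz or bilinear estimate, the accumulated factor is precisely $(L_1L_2L_3L_4)^{1/2}$, as required by the right-hand side of \eqref{eq:key_est3}.

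A secondary simplification: the logarithmic interpolation between $U^4_{\text{ZK}}$ and $V^2_{\text{ZK}}$ used in the original proof to produce a $V^2$-valued bound with $\epsilon$-loss (absorbed by $K$) is replaced here by the elementary geometric mean of the pure $L^4$ bound and the pure bilinear bound, both of which are already $L^2_{t,x,y}$-valued. The resulting arithmetic on the exponents in $M_j,N_j$ is identical to that in \eqref{eq:estN}--\eqref{eq:estM}, and produces the same factor $K=(M_{\min}/M_{\max})^{0^+}(N_{\min}/N_{\max})^{0^+}$ in every case.

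The only place that does not translate mechanically is Case 2C2, where the argument invokes the high-modulation estimate \eqref{eq:highmod}. In the modulation-localized setting, the resonance identity $\sum_j L_j \gtrsim |\Phi|$ (which follows from $\sum_j\tau_j=0$ and the frequency convolution relations) instead forces $\max_j L_j \gtrsim N_1^2 N_3$. This can be used directly: the trivial bound $1\lesssim L_j^{1/2}(N_1^2 N_3)^{-1/2}$ for that index reproduces the $(N_1^2N_3)^{-1/2}$ gain of \eqref{eq:highmod}. The companion factor $\|v_4\|_{L^\infty_tL^2_{x,y}}$ appearing in \eqref{eq:high_mod_1}, \eqref{eq:high_mod_4} is controlled by $L_4^{1/2}\|v_4\|_{L^2_{t,x,y}}$ via Cauchy--Schwarz in $\tau$ using the modulation support of $Q_{L_4}$, supplying the remaining $L_4^{1/2}$ factor.

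The main obstacle is purely combinatorial: one has to verify case by case that every $L_j^{1/2}$ appears exactly once in the final bound, so that the total is $(L_1L_2L_3L_4)^{1/2}$ and not larger. Once this bookkeeping is in place, the $M_j,N_j$ arithmetic needed to extract $\omega_1\omega_2\omega_3K$ is essentially identical to that of Proposition \ref{prop:multi}.
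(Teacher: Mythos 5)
Your proposal is correct and is essentially the paper's own proof: the authors likewise rerun Proposition \ref{prop:multi} with \eqref{eq:L4_stri_U4}, \eqref{eq:bilin_stri}, \eqref{eq:bilin_stri_2} replaced by \eqref{eq:L4_stri_L}, \eqref{eq:bilinear_L}, \eqref{eq:bilinear_BL}, and in Case 2C2 use exactly your observation that the resonance forces $L_j\gtrsim N_1^2N_3$ for some $j$, so the trivial bound $1\lesssim L_j^{1/2}(N_1^2N_3)^{-1/2}$ replaces \eqref{eq:highmod} while Bernstein and $\|Q_L w\|_{L^\infty_tL^2_{x,y}}\lesssim L^{1/2}\|w\|_{L^2_{t,x,y}}$ supply the remaining modulation factors. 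The only slip is cosmetic: in \eqref{eq:high_mod_4} the companion $L^\infty_tL^2_{x,y}$ factor belongs to $v_3$ (not $v_4$), but it is handled by the same Cauchy--Schwarz-in-$\tau$ mechanism you describe.
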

\begin{proof}
    We claim that the proof follows from the arguments used in the proof of Proposition \ref{prop:multi}. To see this, replace the application of \eqref{eq:L4_stri_U4}, \eqref{eq:bilin_stri} and \eqref{eq:bilin_stri_2} with \eqref{eq:L4_stri_L}, \eqref{eq:bilinear_L} and \eqref{eq:bilinear_BL}, respectively. In the case \[
\supp \F_{t,x,y} v_1 \subset \{(\tau,\xi,\eta) \in \R^3 \, :\ |\tau-\xi^3 - \eta^3| \gtrsim N_1^2 N_3\},
\]
we have $L_1\gtrsim N_1^2N_3$ and, in \eqref{eq:high_mod_1}, we replace
\[
\|v_1\|_{L^2_{t,x,y}}\lesssim  N_1^{-1}N_3^{-\frac 12}\|v_1\|_{V^2_{\text{ZK}}}, \quad \|v_4\|_{L^\infty_{t,x,y}}\lesssim M^{\frac12}N_4^\frac12\|v_4\|_{L^\infty_t L^2_{x,y}}.
\]
with
\[
\|v_1\|_{L^2_{t,x,y}}\lesssim L_1^\frac12 N_1^{-1}N_3^{-\frac 12}\|v_1\|_{L^2_{t,x,y}}, \quad \|v_4\|_{L^\infty_{t,x,y}}\lesssim L_4^{\frac12}M^{\frac12}N_4^\frac12\|v_4\|_{L^2_{t,x,y}}.
\]
Finally, in the case 
\[
\supp \F_{t,x,y} v_4 \subset \{(\tau,\xi,\eta) \in \R^3 \, :\ |\tau-\xi^3 - \eta^3| \gtrsim N_1^2 N_3\},
\]
since $L_4\gtrsim N_1^2N_3$, we may replace
\[
\|v_3\|_{L^\infty_tL^2_{x,y}}\lesssim \|v_3\|_{U^2_{\text{ZK}}}, \quad \|v_4\|_{L^2_tL^\infty_{x,y}}\lesssim M^{\frac12}N_4^\frac12N_1^{-1}N_3^{-\frac12}
\|v_4\|_{V^2_{\text{ZK}}}.
\]
with 
\[
\|v_3\|_{L^\infty_tL^2_{x,y}}\lesssim L_3^\frac12 \|v_3\|_{L^2_{t,x,y}}, \quad \|v_4\|_{L^2_tL^\infty_{x,y}}\lesssim M^{\frac12}N_4^\frac12L^\frac12_4N_1^{-1}N_3^{-\frac12}
\|v_4\|_{L^2_{t,x,y}}.
\]
\end{proof}

\begin{proof}[Proof of Theorem \ref{thm:wp_critical}] We begin the the construction of the global solution, which follows from a standard fixed-point argument (see, for example, \cite{HHK09, Kinoshita22, MosPilSaut_dysthe}). Given $u_0\in X$ with $\|u_0\|_X<\delta$, define
	$$
	B_{2\delta}=\{u\in Y: \|u\|_Y\le 2\delta\},
	$$
	endowed with the induced metric, and consider
	\begin{equation}
		\Theta[u](t)= \mathbbm{1}_{[0,\infty)}e^{-t(\partial_x^3+\partial_y^3)}u_0 + \mathbbm{1}_{[0,\infty)}\int_0^t e^{-(t-t')(\partial_x^3+\partial_y^3)}(\partial_x+\partial_y)(u^3)(t')dt'.
	\end{equation}
Applying the multilinear estimate \eqref{eq:multi},
\begin{align*}
	\|\Theta[u]\|_Y\le \|u_0\|_X + C \|u\|_Y^3 \le \delta+C\delta^3
\end{align*}
and
\begin{align*}
	\|\Theta[u_1]-\Theta[u_2]\|_Y\le  C (\|u_1\|_Y^2+\|u_2\|_Y^2)\|u_1-u_2\|_Y \le C\delta^2\|u_1-u_2\|_Y.
\end{align*}
Taking $\delta$ so that $2C\delta^2<1$, we deduce that $\Theta$ is a contraction mapping over $B_{2\delta}$, which concludes the (global) well-posedness proof. Since $u\in Y$, the limit
$$
\lim_{t\to\infty} e^{t(\partial_x
^3+\partial_y^3)}u(t)=u_0 + \lim_{t\to\infty}\int_0^t e^{t'(\partial_x^3+\partial_y^3)}(\partial_x+\partial_y)(u^3)(t')dt'
$$
exists in $H^{0,\frac14}(\R^2)$ and hence the solution scatters as $t\to \infty$.
% For the scattering claim, by \eqref{eq:lim2}, we may define
% \begin{equation}
% 	u_+=u_0 + \lim_{t\to\infty}\int_0^t e^{t'(\partial_x^3+\partial_y^3)}(\partial_x+\partial_y)(u^3)(t')dt'.
% \end{equation}
% Then
% \begin{align*}
% 	&\left\|u(t) - e^{-t(\partial_x^3+\partial_y^3)}u_+\right\|_X\\ \lesssim \ &	\left\|\int_0^t e^{t'(\partial_x^3+\partial_y^3)}(\partial_x+\partial_y)(u^3)(t')dt'-\lim_{t\to\infty}\int_0^t e^{t'(\partial_x^3+\partial_y^3)}(\partial_x+\partial_y)(u^3)(t')dt'\right\|_X \to 0,
% \end{align*}
% as $t\to \infty$. The proof is finished.

% \Simc{Is this proof OK? In the paper for the Dysthe, it is more involved - why?}
\end{proof}

\section{Local well-posedness in the subcritical case}\label{sec:subcritical}

In this section, we prove Theorem \ref{thm:wp_critical} (in the precise formulation given in Theorem \ref{thm:wp_subcritical_v2}).

\begin{proof}
    By standard arguments, the proof can be reduced to the derivation of the multilinear estimate
\begin{equation}\label{eq:multi_bourg}
    \|(\partial_x+\partial_y)u_1u_2u_3\|_{X^{s,a,b'}}\lesssim \prod_{j=1}^3\|u_j\|_{X^{s,a,b}} 
\end{equation}
for $b=(1/2)^+$ and $b'=(b-1)^+.$ By duality, this is equativalent to
\begin{equation}
    \left|\int u_1u_2u_3\cdot  (\partial_x+\partial_y)u_4 dxdydt\right|\lesssim \|u_4\|_{X^{-s,-a,-b'}} \prod_{j=1}^3\|u_j\|_{X^{s,a,b}}.
\end{equation}
By dyadic decomposition in $\xi, \eta, |(\xi,\eta)|$ and $\tau-\xi^3-\eta^3$, writing $v_j=Q_{L_j}P_{R_j}P_{M_j,N_j}u_j$,
it suffices to check that, for some $\epsilon>0$,
    \begin{align}
		R_4^s&\omega_4^{4a}\Bigg|\int v_1v_2v_3\cdot  (\partial_x+\partial_y)v_4 dxdydt\Bigg|\nonumber \\&\lesssim (L_1L_2L_3L_4)^{\frac12-\epsilon}\left(\prod_{j=1}^3 \omega_j^aR_j^s \|v_j\|_{L^2_{x,y,t}}\right)\|v_4\|_{L^2_{x,y,t}} \cdot \left(\frac{R_{\min}M_{\min}N_{\min}}{R_{\max}M_{\max}N_{\max}}\right)^{0^+}.\label{eq:key_est_interp}
	\end{align}
On the one hand, \eqref{eq:key_est3} gives
    \begin{align}
		\omega_4&\Bigg|\int v_1v_2v_3\cdot  (\partial_x+\partial_y)v_4 dxdydt\Bigg|\nonumber \\&\lesssim (L_1L_2L_3L_4)^{\frac12}\left(\prod_{j=1}^3 \omega_j \|v_j\|_{L^2_{x,y,t}}\right)\|v_4\|_{L^2_{x,y,t}} \cdot \left(\frac{M_{\min}N_{\min}}{M_{\max}N_{\max}}\right)^{0^+}.\label{eq:key_est_interp_a}
	\end{align}
On the other hand, by estimate (3.3) in \cite{Kinoshita22}, there exists $\epsilon_0>0$ such that
   \begin{align}
		R_4^{\frac14}&\Bigg|\int v_1v_2v_3\cdot (\partial_x+\partial_y)v_4 dxdydt\Bigg|\nonumber \\&\lesssim R_4^{\frac14}R_{\min}^{\frac34}R_{\max}^{-\frac14}(L_1L_2L_3L_4)^{\frac12-\epsilon_0}\left(\prod_{j=1}^3 \|v_j\|_{L^2_{x,y,t}}\right)\|v_4\|_{L^2_{x,y,t}}
        \label{eq:key_est_interp_s}\\&\lesssim (L_1L_2L_3L_4)^{\frac12-\epsilon_0}\left(\prod_{j=1}^3 R_j^\frac14\|v_j\|_{L^2_{x,y,t}}\right)\|v_4\|_{L^2_{x,y,t}}\cdot \left(\frac{R_{\min}}{R_{\max}}\right)^{\frac14}.\nonumber
	\end{align}
Interpolating between \eqref{eq:key_est_interp_a} and \eqref{eq:key_est_interp_s}, we find \eqref{eq:key_est_interp} for $0< a < 1/4$ and $a+s=1/4$. The case $a+s>1/4$ can be reduced to $a+s=1/4$, since $R_4^s(R_1R_2R_3)^{-s}\lesssim R_4^{s'}(R_1R_2R_3)^{-s'}$ whenever $0\le s'\le s$.
\end{proof}

\section{Ill-posedness}\label{sec:illposed}

In this section, we will prove Propositions \ref{prop:illposed} and \ref{prop:illposed_2}. The proof is based on the method of Bourgain \cite{Bourgain97}. We also refer to the works of Tzvetkov \cite{Tzvet_ill}, Holmer \cite[Theorem 1.4]{Holmer07} and Bejenaru and Tao \cite{TaoBej} for other examples where it is possible to prove the failure of smoothness for the nonlinear flow map at low levels of regularity.

In \cite{Kinoshita22}, it was observed that the data-to-solution map fails to be $C^3$ in the $H^{\frac14,0}(\R^2)$ topology. We employ almost the same counterexample to show Proposition \ref{prop:illposed}. 
\begin{proof}[Proof of Proposition \ref{prop:illposed}] 
It is enough to prove that when $0\leq a <1/4$ and $s+a <1/4$, for arbitrarily large $C>0$ and $0<t \ll 1$, there exists a real valued function $\varphi \in \mathcal{S}(\R^2)$ satisfies $\|\varphi\|_{H^{s,a}(\R^2)} \sim 1$ and 
\begin{equation}\label{eq:goal_notC3}
  \left\| \int_0^t{e^{-(t-t')(\partial_x^3+\partial_y^3)} (\partial_x + \partial_y) \Bigl[ e^{-t'(\partial_x^3+\partial_y^3)} \varphi \Bigr]^3 } 
d t'\right\|_{H^{s,a}(\R^2)}
 \geq C.
\end{equation}
We define a real-valued even function $\phi_{N,\xi} \in C_c^{\infty}({\mathbb R})$ as
\[
\phi_{N,\xi}(\xi) = 
\begin{cases}
1, \ \ \textnormal{if} \ \ N \leq |\xi| \leq N+ N^{-{\frac{1}{2}}}\\
0, \ \ \textnormal{if} \ |\pm \xi-N - 2^{-1}N^{-\frac{1}{2}} | \geq N^{-\frac{1}{2}},
\end{cases}
\]
and $\widehat{\varphi}_N (\xi,\eta) = N^{-s-a +1/4} \phi_{N,\xi}(\xi) \psi_1(\eta)$. Here, $\psi_1 \in C_c^{\infty}(\R)$ is as defined in \eqref{eq:psin} and satisfies $\supp \psi_1 \subset [-2,-1/2] \cup [1/2,2]$. Then, clearly $\|\varphi_N\|_{H^{s,a}(\R^2)} \sim 1$. It is easy to see that if $(\xi_j,\eta_j) \in \supp \varphi_N$ $(j=1,2,3)$, we have
\[
|\Phi(\xi_1,\xi_2,\xi_3,\eta_1,\eta_2,\eta_3)| = | (\xi_1+\xi_2)(\xi_2+\xi_3)(\xi_3+\xi_1)+ (\eta_1+\eta_2)(\eta_2+\eta_3)(\eta_3+\eta_1) |\lesssim 1.
\]
As a result, if $t$ is small, we have 
\begin{align*}
&  \left\| \int_0^t{e^{-(t-t')(\partial_x^3+\partial_y^3)} (\partial_x + \partial_y) \Bigl[  e^{-t'(\partial_x^3+\partial_y^3)} \varphi \Bigr]^3 } 
d t'\right\|_{H^{s,a}(\R^2)}\\
& \gtrsim t N^{-2 s -2 a + 7/4} \biggl\| \mathbbm{1}_{{\operatorname{supp}} \varphi_N}(\xi,\eta)
\int_0^t e^{- i t' (\Phi (\xi_1, \xi_2-\xi_1, \xi-\xi_2,\eta_1, \eta_2-\eta_1, \eta-\eta_2)) } \\
&  \qquad\qquad\qquad\qquad\qquad \times\left(\iint \phi_{N,\xi}(\xi_1) \phi_{N,\xi}(\xi_2-\xi_1) \phi_{N,\xi}(\xi-\xi_2) d \xi_1 d\xi_2 \right)
\\ &\qquad\qquad\qquad\qquad\qquad\qquad\times\left(\iint \psi_{\eta}(\eta_1) \psi_{\eta}(\eta_2-\eta_1) \psi_{\eta}(\eta-\eta_2) d \eta_1 d\eta_2 \right)
d t'\biggr\|_{L^2_{\xi\eta}}\\
& \gtrsim t N^{-2 s - 2a+1/2}.
\end{align*}
This completes the proof of \eqref{eq:goal_notC3}.
\end{proof}
In addition to the not-$C^3$ result for the local-in-time solution map, we now prove that $a = 1/4$ is the fixed-point threshold for the small data scattering of \eqref{mZK2} in $H^{s,a}(\R^2)$.
% \begin{prop}\label{prop:illposed_2}
% Let $0 \leq a<1/4$ and $s \in \R$. Then, for any $\delta>0$, the solution map $v_0 \mapsto v$ of \eqref{mZK2}, as a map from
% \[
% H_{\delta}^{s,a}(\R^2) = \{ f \in H^{s,a}(\R^2) \, : \, \|f\|_{H^{s,a}(\R^2)} < \delta\}
% \]
% to $C_b(\R;H^{s,a}(\R^2))$ fails to be $C^3$.
% \end{prop}
\begin{proof}[Proof of Proposition \ref{prop:illposed_2}]
As in the proof of Proposition \ref{prop:illposed}, it suffices to show that for any $C>0$ there exists a function $\varphi$ such that $\|\varphi\|_{H^{s,a}(\R^2)} \sim 1$ and 
\begin{equation}\label{eq:goal_notC3_2}
  \sup_{t\in \R}\left\| \int_0^t{e^{-(t-t')(\partial_x^3+\partial_y^3)} (\partial_x + \partial_y) \Bigl[  e^{-t'(\partial_x^3+\partial_y^3)} \varphi \Bigr]^3 } 
d t'\right\|_{H^{s,a}(\R^2)}
 \geq C.
\end{equation}
Recalling the definition of $\psi$ in \eqref{eq:psi}, given any positive number $\lambda\in\R^+$, we define
$$
\chi_{\lambda}(\zeta):=\psi\left(\frac{\zeta}{\lambda}\right)=\begin{cases}
    1,& \text{if }|\zeta|\leq \lambda\\
    0,& \text{if }|\zeta|\ge 2\lambda.
\end{cases}
$$
We define $\varphi_N \in \mathcal{S}(\R^2)$ by
\[
\widehat{\varphi}_N (\xi,\eta)= \sum_{\pm_1,\pm_2 \in \{ +,-\}} N^{\frac56-\frac23 a} \chi_{N^{-1}}(\xi\pm_1 1)\chi_{10N^{-\frac23}}(\eta \pm_2 N^{-\frac23}),
\]
Since $\supp \widehat{\varphi}_N \subset \{|(\xi,\eta)|\sim 1\}$, for any $s \in \R$, we have $\|\varphi_N\|_{H^{s,a}(\R^2)} \sim 1$. 
Since $\| N^{\frac43 a} |D_y|^{a}\varphi_N\|_{L^2(\R^2)} \sim 1$, by duality, we observe that
\begin{align*}
&  \sup_{t\in \R}\left\| \int_0^t{e^{-(t-t')(\partial_x^3+\partial_y^3)} (\partial_x + \partial_y) \Bigl[  e^{-t'(\partial_x^3+\partial_y^3)} \varphi_N \Bigr]^3 } 
d t'\right\|_{H^{s,a}(\R^2)}\\
& \gtrsim 
\biggl\| \int_0^{N^2} {|D_y|^{-a}  e^{t'(\partial_x^3+\partial_y^3)} \Bigl[  e^{-t'(\partial_x^3+\partial_y^3)} \varphi_N \Bigr]^3 } 
d t'\biggr\|_{L^2(\R^2)}\\
& \gtrsim   \biggl| \int_0^{N^2}  \Bigl[  e^{-t'(\partial_x^3+\partial_y^3)} \varphi_N \Bigr]^3 |D_y|^{-a}  e^{-t'(\partial_x^3+\partial_y^3)} \bigl( N^{\frac43 a} |D_y|^a\varphi_N \bigr)
d x dy d t'\biggr|\\
& \sim N^{\frac43 a} \| {\mathbbm{1}}_{[0,N^2]}(t) e^{-t(\partial_x^3+\partial_y^3)}\varphi_N\|_{L_{t,x,y}^4}^4.
\end{align*}
Hence, to see \eqref{eq:goal_notC3_2}, it is enough to prove
\begin{equation}\label{eq:notC3_1}
\| {\mathbbm{1}}_{[0,N^2]}(t) e^{-t(\partial_x^3+\partial_y^3)} \varphi_N\|_{L_{t,x,y}^4} \gtrsim N^{\frac{1}{12}-\frac23 a}.
\end{equation}
Define
\begin{align*}
    D_N = \{(t,x,y) \in \R^3 \, : \, |t|\leq 2^{-5}N^2, \ &\  |x+3t| \leq 2^{-5}N, \ \\&\ |y| \leq 2^{-5}N^{\frac23}, \ |\cos(t+x)|\geq 2^{-1}  \}.
\end{align*}
When $(t,x,y)\in D_N$, it holds $| e^{-t(\partial_x^3+\partial_y^3)} \varphi_N| \gtrsim N^{-\frac56-\frac23 a}$. Indeed, we have
\begin{align*}
&\bigl|  e^{-t(\partial_x^3+\partial_y^3)} \varphi_N\bigr| = \biggl| \int_{\R^2} e^{i (x\xi+t\xi^3)} e^{i(y \eta + t \eta^3)} \widehat{\varphi}_N(\xi,\eta)d\xi d\eta \biggr|\\
 =\ & \biggl| e^{i(t+x)}\int_{\R^2} e^{i \bigl((x+3t)(\xi-1)+t( (\xi-1)^3  + 3 (\xi-1)^2) \bigr)} e^{i(y \eta + t \eta^3)} \widehat{\varphi}_N(\xi,\eta)d\xi d\eta \biggr|\\
=\ & 2 N^{\frac56-\frac23 a} \biggl|  \Re \Bigl[e^{i(t+x)}\int_{\R} e^{i \bigl((x+3t)(\xi-1)+t( (\xi-1)^3  + 3 (\xi-1)^2) \bigr)}
\chi_{N^{-1}}(\xi-1)d \xi \Bigr] \biggr|\\
&\qquad \qquad  \times \biggl| \int_{\R} e^{i(y \eta + t \eta^3)} \Bigl[ \chi_{10N^{-\frac23}}(\eta + N^{-\frac23}) + \chi_{10N^{-\frac23}}(\eta - N^{-\frac23}) \Bigr] d\eta \biggr|\\
 \gtrsim\ & N^{-\frac56-\frac23 a}.
\end{align*}
This implies
\[ \| {\mathbbm{1}}_{[0,N^2]}(t) e^{-t(\partial_x^3+\partial_y^3)} \varphi_N\|_{L_{t,x,y}^4} \gtrsim N^{-\frac56-\frac23 a} |D_N|^{\frac14} \sim N^{\frac{1}{12}-\frac23 a},
\]
which completes the proof of \eqref{eq:notC3_1}.
\end{proof}

\bibliographystyle{plain}
\bibliography{biblio}

	\normalsize

\begin{center}
	{\scshape Simão Correia}\\
	{\footnotesize
		Center for Mathematical Analysis, Geometry and Dynamical Systems,\\
		Department of Mathematics,\\
		Instituto Superior T\'ecnico, Universidade de Lisboa\\
		Av. Rovisco Pais, 1049-001 Lisboa, Portugal\\
		simao.f.correia@tecnico.ulisboa.pt\\
	}
	\bigskip
		{\scshape Shinya Kinoshita}\\
	{\footnotesize
		Graduate School of Mathematics,\\
            Nagoya University,\\
		Furocho, Chikusa-ku, Nagoya, 464-8602 Aichi, Japan\\
		kinoshita@math.nagoya-u.ac.jp\\
	}

\end{center}

% \begin{thebibliography}{10}
% \bibitem{CH18} Candy, Timothy, and Sebastian Herr. {On the division problem for the wave maps equation}, Annals of PDE 4 (2018): 1-61.
% \bibitem{HHK09}Hadac, M., Herr, S. and Koch, H., {Well-posedness and scattering for the KP-II equation in a critical space}, Ann. Inst. H. Poincar\'{e} Anal. Non Lin\'{e}aire, {\bfseries 26} (2009), 917--941. 
% \bibitem{HHK09-2}Hadac, M., Herr, S. and Koch, H., {Erratum to "Well-posedness and scattering for the KP-II equation in a critical space"[Ann. I. H. Poincar\'{e} AN {\bfseries 26} (2009), 917--941]}, Ann. Inst. H. Poincar\'{e} Anal. Non Lin\'{e}aire, {\bfseries 27} (2010), no. 3, 971--972. 
% \bibitem{Kinoshita22} S.~Kinoshita, Well-posedness for the Cauchy problem of the modified Zakharov-Kuznetsov equatio, Funkcialaj Ekvacioj, (2022). 139--158.
% \end{thebibliography}

\end{document}